\documentclass{amsart}

\usepackage{hyperref}
\usepackage{amsmath,amsthm,amssymb}

\newtheorem{thm}{Theorem}
\newtheorem{prop}[thm]{Proposition}
\newtheorem{lem}[thm]{Lemma}
\newtheorem{cor}[thm]{Corollary}
\newtheorem{clm}[thm]{Claim}
\newtheorem{sbclm}[thm]{Subclaim}

\theoremstyle{definition}
\newtheorem{dfn}[thm]{Definition}
\newtheorem{rem}[thm]{Remark}
\newtheorem{set}[thm]{Setting}
\newtheorem{prob}[thm]{Problem}

\numberwithin{thm}{section}
\numberwithin{equation}{section}

\DeclareMathOperator{\diam}{diam}
\DeclareMathOperator{\vol}{vol}
\DeclareMathOperator{\dom}{Dom}

\begin{document}

\title[A fibration theorem for collapsing sequences]{A fibration theorem for collapsing sequences of Alexandrov spaces}
\author[T. Fujioka]{Tadashi Fujioka}
\address{Department of Mathematics, Kyoto University, Kitashirakawa, Kyoto 606-8502, Japan}
\email{\href{mailto:tfujioka@math.kyoto-u.ac.jp}{tfujioka@math.kyoto-u.ac.jp}}
\date{\today}
\subjclass[2010]{53C20, 53C23}
\keywords{Alexandrov spaces, collapse, fibration, strainers, almost regular maps}

\begin{abstract}
Suppose a sequence $M_j$ of Alexandrov spaces collapses to a space $X$ with only weak singularities.
Yamaguchi constructed a map $f_j:M_j\to X$ called an almost Lipschitz submersion for large $j$.
We prove that if $M_j$ has a uniform positive lower bound for the volumes of spaces of directions, which is sufficiently large compared to the weakness of singularities of $X$, then $f_j$ is a locally trivial fibration.
Moreover, we show some properties on the intrinsic metric and the volume of the fibers of $f_j$.
\end{abstract}

\maketitle

\section{Introduction}\label{sec:intro}

Let $M_j$ be a sequence of $n$-dimensional Alexandrov spaces with curvature $\ge\kappa$ and diameter $\le D$.
It is well-known that $M_j$ has a convergent subsequence in the Gromov-Hausdorff distance and that the limit space $X$ is also an Alexandrov space of dimension $\le n$ and with curvature $\ge\kappa$.
The main problem of the convergence theory of Alexandrov spaces is to determine the relation between the topology and geometry of $X$ and those of $M_j$ with large $j$.

According to Perelman's stability theorem, if $\dim X=n$, then $M_j$ is homeomorphic to $X$ (\cite{P1}, \cite{K}).
The case $\dim X<n$ is called a collapse.
In this case, Yamaguchi \cite{Y1} proved that if both $M_j$ and $X$ are Riemannian manifolds, then there exists a locally trivial fibration $f_j:M_j\to X$, which is an almost Riemannian submersion.
This result can be generalized to the case when both $M_j$ and $X$ have only weak singularities (\cite[9.13]{BGP}).
Furthermore, Yamaguchi \cite{Y2} also proved that even if $M_j$ is a general Alexandrov space, if $X$ has only weak singularities, then there exists a map $f_j:M_j\to X$ called an almost Lipschitz submersion, which is a generalization of an almost Riemannian submersion.

To state Yamaguchi's almost Lipschitz submersion theorem, we introduce some notation.
We denote by $\delta$ a positive number less than some constant depending only on $n$ and $\kappa$, and by $\varkappa(\delta)$ a positive function depending only on $n$ and $\kappa$ such that $\varkappa(\delta)\to0$ as $\delta\to0$.
For a positive number $a$, we denote by $c(a)$ a positive constant depending only on $n$, $\kappa$ and $a$, which is much smaller than $a$.

\begin{thm}[{\cite[0.2]{Y2}}]\label{thm:subm}
Let $k<n$ and let $X$ be a $k$-dimensional Alexandrov space with curvature $\ge\kappa$ such that every point has a $(k,\delta)$-strainer with length $>\ell$.
Let $M$ be an $n$-dimensional Alexandrov space with curvature $\ge\kappa$ that is $\mu$-close to $X$ in the Gromov-Hausdorff distance.
Suppose $\mu<c(\ell\delta^2)\ll\ell\delta^2$.
Then, there exists a $\varkappa(\delta)$-almost Lipschitz submersion $f:M\to X$ in the following sense:
\[\left|\frac{|f(x)f(y)|}{|xy|}-\sin\inf_z\angle yxz\right|<\varkappa(\delta)\]
for any $x,y\in M$, where the infimum is taken over all $z\in f^{-1}(f(x))$.
\end{thm}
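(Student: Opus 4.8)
The plan is to build $f$ by patching together local \emph{strainer coordinate maps} and averaging them. First I would fix a $\mu$-Gromov--Hausdorff approximation $\psi\colon M\to X$ with an almost-inverse $\phi\colon X\to M$, and a maximal $r$-separated net $\{p_\alpha\}$ in $X$, with $r$ a small fixed multiple of $\ell\delta$ chosen so that the strainer conditions at $p_\alpha$ still hold, with $\varkappa(\delta)$-quality, on $U_\alpha:=B(p_\alpha,10r)$. For each $\alpha$ choose a $(k,\delta)$-strainer $(a^\alpha_i,b^\alpha_i)_{i=1}^k$ at $p_\alpha$ of length $>\ell$. The first thing to verify is that its image $(\phi(a^\alpha_i),\phi(b^\alpha_i))_i$ is a $(k,\varkappa(\delta))$-strainer of length $>\ell/2$ in $M$: a strainer is defined by comparison inequalities among the distances of a bounded family of triangles with sides $\gtrsim\ell$, these distances are moved by at most $\mu$ under $\phi$, so the strainer defect grows by $O(\sqrt{\mu/\ell})$ — the square root arising from the near-degenerate triangles $a^\alpha_ip_\alpha b^\alpha_i$ — which stays below $\varkappa(\delta)$ precisely because $\mu\ll\ell\delta^2$. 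Then set $\varphi_\alpha:=(\dist(a^\alpha_i,\cdot))_i\colon U_\alpha\to\mathbb R^k$, $\tilde\varphi_\alpha:=(\dist(\phi(a^\alpha_i),\cdot))_i\colon\tilde U_\alpha\to\mathbb R^k$ with $\tilde U_\alpha:=B(\phi(p_\alpha),10r)$, and $f_\alpha:=\varphi_\alpha^{-1}\circ\tilde\varphi_\alpha\colon\tilde U_\alpha\to X$.

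Two facts of the strainer calculus, essentially from \cite{BGP}, do the local work. Because $\dim X=k$, the map $\varphi_\alpha$ is a $\varkappa(\delta)$-almost isometry of $U_\alpha$ onto an open subset of $\mathbb R^k$; hence $\varphi_\alpha^{-1}$ is $\varkappa(\delta)$-almost isometric, and on overlaps the transition maps $\varphi_\beta\circ\varphi_\alpha^{-1}$ are $\varkappa(\delta)$-close, in the Lipschitz sense, to rigid motions $g_{\alpha\beta}$ of $\mathbb R^k$. On $M$, the first variation formula, together with the $\varkappa(\delta)$-almost-orthonormality of the directions $\uparrow_x^{\phi(a^\alpha_i)}$ coming from the strainer inequalities, shows that $d_x\tilde\varphi_\alpha$ is $\varkappa(\delta)$-close to the orthogonal projection of $\Sigma_xM$ onto the $k$-plane spanned by those directions, whose orthogonal complement is, up to $\varkappa(\delta)$, the set of directions into the fiber $\tilde\varphi_\alpha^{-1}(\tilde\varphi_\alpha(x))$; this yields
\[
\Bigl|\,\tfrac{|\tilde\varphi_\alpha(x)\tilde\varphi_\alpha(y)|}{|xy|}-\sin\inf_z\angle yxz\,\Bigr|<\varkappa(\delta),\qquad x,y\in\tilde U_\alpha,
\]
with $z$ ranging over that fiber. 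Since $\varphi_\alpha^{-1}$ is almost isometric and $f_\alpha$ has the same fibers as $\tilde\varphi_\alpha$, this already is the submersion estimate for each $f_\alpha$ on its own chart. Finally $\tilde\varphi_\alpha$ and $\varphi_\alpha\circ\psi$ differ by $\lesssim\mu$, so all the $f_\alpha$ are $\lesssim\mu$-close to $\psi$, hence to one another.

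To globalize, I would take a Lipschitz partition of unity $\{\chi_\alpha\}$ on $X$ subordinate to $\{U_\alpha\}$ with $|\nabla\chi_\alpha|\le C/r$ and multiplicity bounded by $N(n,\kappa)$, pull it back to Lipschitz weights $\tilde\chi_\alpha$ on $M$ built from the functions $\max(0,r-\dist(\phi(p_\alpha),\cdot))$, and let $f(x)$ be the center of mass in $X$ of $\{f_\alpha(x)\}$ with weights $\{\tilde\chi_\alpha(x)\}$ — well defined since these points lie in a ball of radius $\lesssim\mu$ and small balls of $X$ carry a unique center of mass. Read in any admissible chart $\varphi_{\alpha_0}$, $f$ agrees up to negligible error with the Euclidean barycenter $\sum_\alpha\tilde\chi_\alpha\,(\varphi_{\alpha_0}\!\circ\!f_\alpha)$, which I would estimate directly: comparing it on that chart with a single model $f_{\alpha_0}$, the difference of differentials splits as $\sum_\alpha\nabla\tilde\chi_\alpha\otimes(\varphi_{\alpha_0}f_\alpha-\varphi_{\alpha_0}f_{\alpha_0})+\sum_\alpha\tilde\chi_\alpha\,(d(\varphi_{\alpha_0}f_\alpha)-d(\varphi_{\alpha_0}f_{\alpha_0}))$, the first sum being $\lesssim(N/r)\mu<\varkappa(\delta)$ because $\mu\ll\ell\delta^2$, and the second being $<\varkappa(\delta)$ once one knows the $d\tilde\varphi_\alpha$ agree up to $\varkappa(\delta)$ after composition with the $g_{\alpha\beta}$. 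Granting that, $f$ is globally $\varkappa(\delta)$-Lipschitz-close to each $f_\alpha$ with $\varkappa(\delta)$-close fibers, so the per-chart estimate transfers to $f$ for points within a common chart; for points far apart relative to the fiber size — whose diameters are $\lesssim\mu$ since $f\approx\psi$ and $\psi$ is a $\mu$-approximation — one extends the estimate by comparing the lengths of a minimizing geodesic and of its image under $f$, a geodesic between such points being forced to be nearly horizontal.

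I expect the main obstacle to be exactly that compatibility of the local differentials: that two $(k,\delta)$-strainer maps on $M$ whose $\mathbb R^k$-images are intertwined by a near-rigid motion have differentials intertwined, up to $\varkappa(\delta)$, by the same motion — equivalently, that the collapsing direction at a point of $M$ is independent of the chosen strainer up to $\varkappa(\delta)$. Establishing this needs the full strength of the strainer calculus: the space of directions of a $k$-dimensional Alexandrov space every point of which is $(k,\delta)$-strained is $\varkappa(\delta)$-close to $S^{k-1}$; in such a nearly Euclidean $\Sigma$ a $\varkappa(\delta)$-orthonormal frame is determined up to $\varkappa(\delta)$ by its pairwise comparison angles; and those comparison angles are Gromov--Hausdorff stable, while the opposite points $b^\alpha_i$ supply the two-sided angle control needed to transfer the conclusion from $X$ to $M$. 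Everything else — the persistence of strainers under $\phi$, the almost-isometry of $\varphi_\alpha$, the averaging construction, and the bookkeeping with $\mu\ll\ell\delta^2$ — is comparatively routine once this compatibility is in hand.
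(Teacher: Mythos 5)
Your plan is essentially the paper's own proof: local strainer coordinates $\varphi_\alpha^{-1}\circ\tilde\varphi_\alpha$ at a net of points of $X$, glued by Lipschitz cutoff weights (the paper blends charts one at a time with cutoffs rather than by a simultaneous barycenter, an immaterial difference), a differential-closeness estimate between the glued map and the local models, and the submersion property read off from that estimate by identifying the almost-kernel with the fiber directions via the co-Lipschitz/openness property — exactly the route through Theorem \ref{thm:con}, Proposition \ref{prop:reg} and Proposition \ref{prop:int}. The ``main obstacle'' you defer — strainer-independence of the vertical directions, i.e.\ compatibility of the local differentials — is indeed the crux, and it is resolved in the paper's inductive proof of the inequality \eqref{eq:con} by the very ingredients you list: the suspension/sphere structure of the spaces of directions at $f(x)\in X$ and at $x\in M$ (Lemma \ref{lem:susp}), Gromov--Hausdorff stability of comparison angles, and the two-sided angle control from the $b_i$'s, which together give $\sum_{i'}\cos^2\angle\hat a_i x\hat a_{i'}^j\approx 1$, i.e.\ that the other strainer's directions at $x$ are nearly horizontal.
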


Yamaguchi conjectured that the map $f$ is actually a locally trivial fibration.
Rong and Xu \cite{RX} showed that it is true if each fiber of $f$ is a topological manifold (without boundary) of codimension $k$.
Xu \cite{X} (cf.\ \cite{XY}) also proved that $f$ is a Hurewicz fibration.
Their proofs are based on the construction of neighborhood retractions to the fibers, which can be applied to a wider class of maps called $e^\varepsilon$-Lipschitz and co-Lipschitz maps.
We prove Yamaguchi's conjecture from a different point of view, in the following case:

\begin{thm}\label{thm:main}
Under the conditions of Theorem \ref{thm:subm}, let $\varepsilon$ be a lower bound for the volume of the space of directions at each point of $M$.
Suppose in addition $\delta<c(\varepsilon)\ll\varepsilon$.
Then, the map $f$ is a locally trivial fibration.
\end{thm}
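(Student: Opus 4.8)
The strategy is to work near an arbitrary point $p_0\in M$, to understand the fibers of $f$ over a neighborhood of $x_0:=f(p_0)$ after rescaling $M$ so that they acquire unit size, and to transfer a product structure from the rescaled limit back to $M$. The bound $\delta<c(\varepsilon)$ is used only to prevent the rescaled fibers from collapsing, which is exactly what makes the last step possible.

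\emph{Localization.} Being a locally trivial fibration is local on the base, so it suffices to trivialize $f$ over a small ball $V\ni x_0$ in $X$. Fix a $(k,\delta)$-strainer $(a_i,b_i)_{i=1}^k$ at $x_0$; then $\varphi:=(\dist(\cdot,a_i))_{i=1}^k$ is a $\varkappa(\delta)$-bi-Lipschitz homeomorphism of $V$ onto an open subset of $\mathbb R^k$, and the strainer points lift to $\tilde a_i,\tilde b_i\in M$ forming a $(k,\varkappa(\delta))$-strainer at every point of $U:=f^{-1}(V)$ near $p_0$. It is enough to show that $\varphi\circ f$ is a trivial fibration over $\varphi(V)$.

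\emph{Structure of the fibers.} Here the volume hypothesis enters. For $y\in V$ put $\mathcal F_y:=f^{-1}(y)$, with its intrinsic metric; the almost Lipschitz submersion inequality of Theorem \ref{thm:subm} shows this intrinsic metric is $\varkappa(\delta)$-close to the ambient one on $U$. Rescaling $M$ so that a fixed ball in $\mathcal F_{x_0}$ has unit diameter, a neighborhood of $p_0$ becomes $\varkappa(\delta)$-close to a metric product $\mathbb R^k\times F$, in which the lifted strainer directions span the $\mathbb R^k$ factor by the splitting theorem, and in the limit $\mu\to0$ one gets $F$ an Alexandrov space of curvature $\ge0$. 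The crucial point is that $F$ is $(n-k)$-dimensional and non-collapsed: at $p_0$ the space of directions $\Sigma_{p_0}M$ is $\varkappa(\delta)$-close to the spherical join $S^{k-1}*\Sigma'$, with $\Sigma'$ the $\delta$-perpendicular subspace, and the join volume formula together with $\vol(\Sigma_{p_0}M)\ge\varepsilon$ gives $\vol(\Sigma')\ge c(\varepsilon)-\varkappa(\delta)$. Since $\delta<c(\varepsilon)$, the error $\varkappa(\delta)$ is absorbed, so the rescaled fibers carry a definite amount of $(n-k)$-dimensional volume and, uniformly in $y$, converge to one and the same $(n-k)$-dimensional Alexandrov space $F$ with $\vol(\Sigma_qF)\ge c(\varepsilon)$. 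The estimates on the intrinsic metric and on the volume of the fibers claimed in the abstract come out of this analysis.

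\emph{Local trivialization, and the main obstacle.} Over the rescaled limit the map $\varphi\circ f$ becomes the projection $\mathbb R^k\times F\to\mathbb R^k$, and the convergence is uniform over $V$; one then transfers the product structure back to $U$ by covering $\varphi(V)$ with small cubes, applying the stability theorem to the non-collapsed rescaled fibers and their ambient neighborhoods over each cube to obtain fiber-preserving homeomorphisms, and gluing these along overlaps. I expect the genuine difficulty to lie exactly here: the usual stability theorem produces some homeomorphism onto $\mathbb R^k\times F$, not one commuting with the given map $\varphi\circ f$, so what is needed is a \emph{relative} (fibered) stability theorem — equivalently, a version of Yamaguchi's integration of the almost-submersion structure by a gradient-like flow transverse to the fibers, now carried out when the fibers are only Alexandrov spaces rather than manifolds, so that the flow is still a homeomorphism. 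The non-collapsing established above, which is precisely what $\delta<c(\varepsilon)\ll\varepsilon$ buys, is what makes this relative stability argument available; this is complementary to \cite{RX} and \cite{X}, where the fibers are assumed, respectively known, to be topological manifolds.
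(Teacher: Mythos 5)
There is a genuine gap, and it sits exactly at the two places your sketch leans on. First, the non-collapsing claim for the rescaled fibers is false under the stated hypotheses. A lower bound $\vol_{n-1}\Sigma_p\ge\varepsilon$ for all $p\in M$ is an \emph{infinitesimal} condition and gives no volume lower bound for balls at any definite scale (Bishop--Gromov only bounds $\vol B(p,r)$ from \emph{above} in terms of $\vol\Sigma_p$); in particular it does not prevent the fibers from collapsing at the scale of their own diameter. Concretely, take $M_j=S^1_1\times S^1_{\epsilon_j}\times S^1_{\epsilon_j^2}\to X=S^1_1$, $n=3$, $k=1$: every space of directions is the round $\mathbb S^2$, so $\varepsilon$ can be taken maximal, yet the fibers are flat tori of sides $\epsilon_j,\epsilon_j^2$, which after rescaling to unit diameter converge to a circle, not to a non-collapsed $(n-k)$-dimensional space $F$; their $(n-k)$-volume divided by $(\diam)^{n-k}$ tends to $0$. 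Your passage from $\vol\Sigma'\ge c(\varepsilon)$ (which is correct, and is consistent with Theorem \ref{thm:fbr}(2)) to ``the rescaled fibers carry a definite amount of $(n-k)$-dimensional volume and converge to one and the same non-collapsed $F$'' conflates the cone at a point with the fiber at diameter scale, so the blow-up picture $\mathbb R^k\times F$ with non-collapsed $F$, on which your whole trivialization scheme rests, is not available. Second, even granting that picture, the step you yourself flag --- a fiber-preserving (relative) stability/flow theorem transferring the product structure back to $M$ --- is precisely the content of the theorem and is left unproven; saying that non-collapsing ``makes this relative stability argument available'' is not an argument, and in fact such parametrized stability statements are exactly what Perelman's machinery is designed to deliver.

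The paper's route avoids both issues: it does not blow up at the fiber scale at all. Using the inequality \eqref{eq:main} established in Theorem \ref{thm:con}, the composition $\varphi\circ f$ with a local distance chart of $X$ is shown to satisfy a relaxed form of Perelman's noncriticality (Definition \ref{dfn:nc}), and the hypothesis $\vol_{n-1}\Sigma_p\ge\varepsilon$ with $\delta\ll c(\varepsilon)$ is used exactly where Perelman uses it, namely to produce the averaged distance function $\sigma$ from a $c(\varepsilon)/\omega^{n-1}$-net of directions (Setting \ref{set:per}) and to absorb the $\varkappa(\delta)$-errors against $c(\varepsilon)$ in the openness, complementability and injectivity arguments (Propositions \ref{prop:open}--\ref{prop:per2}). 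Local triviality of $\varphi\circ f$, hence of $f$, then follows from the generalized fibration theorem (Theorem \ref{thm:per}), whose topological part handles the fibers as MCS-spaces and needs no non-collapsing of the fibers relative to their diameter. If you want to salvage your approach, you would have to either prove the relative stability theorem you invoke without the (false) non-collapsing of rescaled fibers, or reduce to Perelman's framework as the paper does.
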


In the case of collapse of codimension one, the additional assumption is always satisfied.

\begin{cor}\label{cor:main}
Under the conditions of Theorem \ref{thm:subm}, if $k=n-1$, then $f$ is a locally trivial fibration whose fiber is homeomorphic to a circle or a closed interval.
\end{cor}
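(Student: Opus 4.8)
The plan is to reduce Corollary~\ref{cor:main} to Theorem~\ref{thm:main}. The only hypothesis of Theorem~\ref{thm:main} beyond those of Theorem~\ref{thm:subm} is $\delta<c(\varepsilon)\ll\varepsilon$, where $\varepsilon$ is a lower bound for the volumes of the spaces of directions of $M$. So it suffices to show that when $k=n-1$ one may take $\varepsilon$ to be a constant depending only on $n$: the inequality $\delta<c(\varepsilon)\ll\varepsilon$ then becomes a requirement that $\delta$ be small in terms of $n$ and $\kappa$, which is part of the standing convention on $\delta$.

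To produce such an $\varepsilon$, I would first observe that every point $p\in M$ carries an $(n-1,\varkappa(\delta))$-strainer of length $>\ell/2$. Indeed, $M$ is $\mu$-close to $X$ with $\mu<c(\ell\delta^2)\ll\ell\delta^2$, so an $(n-1,\delta)$-strainer of length $>\ell$ at the point of $X$ corresponding to $p$ pulls back to $M$, the relevant angle comparisons changing by at most $\varkappa(\delta)$; this is the standard stability of strainers under Gromov--Hausdorff approximation. Now a point of an $n$-dimensional Alexandrov space admitting an $(n-1,\varkappa(\delta))$-strainer has a space of directions that is $\varkappa(\delta)$-close to a spherical join $S^{n-2}\ast\Sigma_p'$, where the transverse factor $\Sigma_p'$ is the space of directions of the (one-dimensional) fiber of the strainer map, hence a point or $S^0$. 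In either case $\vol(S^{n-2}\ast\Sigma_p')\ge\tfrac12\vol(S^{n-1})$, so $\vol(\Sigma_pM)\ge\tfrac12\vol(S^{n-1})-\varkappa(\delta)$, which for $\delta$ small exceeds $\varepsilon:=\tfrac14\vol(S^{n-1})$. Theorem~\ref{thm:main} now applies, and $f$ is a locally trivial fibration.

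It remains to identify the fiber $F=f^{-1}(x)$. The fibers of an almost Lipschitz submersion have Hausdorff dimension $\dim M-\dim X=1$, so $F$ is a compact one-dimensional space; the local picture above refines this. At $p\in F$ the space $\Sigma_pM$ is $\varkappa(\delta)$-close to $S^{n-1}$ if $\Sigma_p'=S^0$ and to a closed hemisphere if $\Sigma_p'$ is a point; correspondingly the vertical directions $\Sigma_pF\subset\Sigma_pM$ form a pair of nearly antipodal directions in the first case, so that $F$ is an arc near the interior point $p$, and a single direction in the second, with $p$ then lying in $\partial M$, so that $F$ is a half-arc with endpoint $p$. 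Thus $F$ is a compact topological one-manifold whose boundary, if any, lies in $\partial M$; combined with the connectedness of the fibers of $f$ (a feature of almost Lipschitz submersions, visible in the study of the intrinsic metric of the fibers used for Theorem~\ref{thm:main}; see also \cite{Y2}), this forces $F$ to be homeomorphic to a circle or a closed interval, the latter precisely when $F$ meets $\partial M$.

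The step I expect to demand the most care is the uniform lower bound on $\vol(\Sigma_pM)$ --- in particular the identification of the zero-dimensional transverse factor $\Sigma_p'$ and the resulting local model for $F$ at its endpoints. Once these are in hand, invoking Theorem~\ref{thm:main} and keeping track of the smallness conventions on $\delta$ are routine.
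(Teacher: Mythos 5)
The first half of your argument is correct and is exactly the paper's route: every point of $M$ inherits an $(n-1,\varkappa(\delta))$-strainer from $X$, so by Lemma \ref{lem:susp} and Remark \ref{rem:susp} the space of directions $\Sigma_p$ is $\varkappa(\delta)$-close to $\mathbb S^{n-1}$ or $\mathbb S^{n-1}_+$, whence $\vol_{n-1}\Sigma_p>c$ uniformly and Theorem \ref{thm:main} applies with $\varepsilon=c$.

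The identification of the fiber, however, has a genuine gap. You pass from the statement that the vertical direction set $\Sigma_pF\subset\Sigma_pM$ is Hausdorff-close to a pair of nearly antipodal directions (resp.\ a single direction) to the conclusion that $F$ is locally an arc (resp.\ a half-arc) at $p$. This inference is not justified: the space of directions of a compact subset is infinitesimal data and does not control the local topology of that subset. A priori $F$ is only a compact set of Hausdorff dimension $n-k$ (and even that is Theorem \ref{thm:fbr}(2), a nontrivial result of the paper, not a standing fact about almost Lipschitz submersions); a $1$-dimensional compactum whose directions at $p$ cluster near two antipodal points can still fail to be a $1$-manifold near $p$ --- think of a union of small arcs accumulating at $p$ from both sides, all of whose directions lie near $\pm\xi$. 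Ruling this out is precisely the local structure problem that the fibration machinery is there to solve, and indeed the conjecture that such fibers are well behaved is open in general (Problem \ref{prob:per}). The paper instead gets the local model from the topological part of Perelman's proof \cite[1.4--1.5]{P1}, which shows the fiber is a compact $1$-dimensional MCS-space, i.e.\ a finite graph, and then excludes branch points by combining Proposition \ref{prop:conn} (two nearby fiber points $q,r$ are joined inside the fiber by a curve of length $\le c(\varepsilon)|qr|$, whereas a curve joining points on distinct branches must pass through the vertex) with the fact that the vertical directions cluster near at most two antipodal points (Proposition \ref{prop:int} and Remark \ref{rem:susp}). Your proposal never invokes this structure theory, so the step ``$F$ is a compact topological one-manifold'' is unsupported. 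Two smaller points: connectedness of $F$ should be quoted from Corollary \ref{cor:int} together with $\diam F\le C\mu\ll\ell\delta$, and the claim that closeness of $\Sigma_p$ to a hemisphere forces $p\in\partial M$ requires Perelman's stability theorem (noncollapsed closeness to the fixed space $\mathbb S^{n-1}_+$), which you do not mention.
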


Let us explain how to prove Theorem \ref{thm:main}.
Let $\{(a_i,b_i)\}_{i=1}^k$ be a $(k,\delta)$-strainer at $p\in X$, where $k=\dim X$.
Then, the distance map $\varphi=(|a_1\cdot|,\dots,|a_k\cdot|):X\to\mathbb R^k$ gives a local chart around $p$.
Let $\hat\varphi:M\to\mathbb R^k$ and $\hat p\in M$ be natural lifts of $\varphi$ and $p$, respectively.
Then, $\varphi^{-1}\circ\hat\varphi$ gives a local map between neighborhoods of $\hat p$ and $p$.
The global map $f$ in Theorem \ref{thm:subm} is constructed by gluing such local maps.
We prove that $f$ satisfies
\begin{equation}\label{eq:main}
\bigl|(\varphi\circ f(x)-\varphi\circ f(y))-(\hat\varphi(x)-\hat\varphi(y))\bigr|<\varkappa(\delta)|xy|
\end{equation}
for any $x$, $y$ near $\hat p$.
Roughly speaking, the differential of $\varphi\circ f$ is close to that of $\hat\varphi$.
This inequality allows $f$ to inherit the properties of $\varphi^{-1}\circ\hat\varphi$.

In \cite{P1}, Perelman developed the theory of noncritical maps and proved that a proper noncritical map is a locally trivial fibration (here, the definition of noncriticality includes an assumption on the volume of spaces of directions).
In particular, the above $\hat\varphi$ is a locally trivial fibration near $\hat p$ under the additional assumption of Theorem \ref{thm:main}.
We slightly modify the definition of noncritical maps in terms of the inequality \eqref{eq:main} and show that $\varphi\circ f$ is also a locally trivial fibration.
From this point of view, the general case of Yamaguchi's conjecture is reduced to the problem of proving Perelman's fibration theorem for noncritical maps without the assumption on the volume of spaces of directions (see also Remark \ref{rem:mor}).

Next, we discuss the fibers of the map $f$ in Theorem \ref{thm:subm} (not in Theorem \ref{thm:main}).
Fibers of almost regular maps such as $\hat\varphi$ were studied in \cite[\S11--12]{BGP}.
The inequality \eqref{eq:main} enables us to apply the arguments there to $\varphi\circ f$.
It is also known that the fundamental group of the homotopy fiber of $f$ contains a nilpotent subgroup whose index is uniformly bounded above (\cite{FY}, \cite{Y2}, \cite{KPT}, \cite{X}, \cite{XY}).
Here, we show some metric properties of the fibers of $f$.
Note that the diameters of the fibers of $f$ are very small (less than a constant multiple of the Gromov-Hausdorff distance $\mu$ between $M$ and $X$).
Let $\vol_m$ denote the $m$-dimensional Hausdorff measure.

\begin{thm}\label{thm:fbr}
Let $f:M\to X$ be the map of Theorem \ref{thm:subm}.
Let $F_p$ denote the fiber $f^{-1}(p)$ over $p\in X$.
\begin{enumerate}
\item The induced intrinsic metric of $F_p$ is almost isometric to the original one, that is,
\[|xy|_{F_p}<(1+\varkappa(\delta))|xy|\]
for any $x,y\in F_p$, where $|\ ,\ |_{F_p}$ denotes the induced intrinsic metric of $F_p$.
(In particular, we can use both metrics below.)
\item The Hausdorff dimension of $F_p$ is $n-k$.
Moreover, we have
\[0<\vol_{n-k}F_p<C(\diam F_p)^{n-k},\]
where $C$ is a positive constant depending only on $n$ and $\kappa$.
\item Fix $p\in X$.
Then, for any $q\in X$ sufficiently close to $p$, we have
\[\left|\frac{\vol_{n-k}F_q}{\vol_{n-k}F_p}-1\right|<\varkappa(\delta).\]
\end{enumerate}
\end{thm}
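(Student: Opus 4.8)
The plan is to deduce the three assertions from the corresponding facts about fibers of almost regular maps established in \cite[\S11--12]{BGP}, transporting them to $f$ along the inequality \eqref{eq:main}. First I would set up the local picture: fix $p\in X$ with a $(k,\delta)$-strainer $\{(a_i,b_i)\}_{i=1}^k$ of length $>\ell$, let $\varphi=(|a_1\cdot|,\dots,|a_k\cdot|)$ be the associated chart on a neighborhood $U\ni p$ (a $(1+\varkappa(\delta))$-bi-Lipschitz embedding by strainer theory), let $\hat\varphi\colon\hat U\to\mathbb R^k$ be the almost regular map built from lifts $\hat a_i,\hat b_i\in M$, and put $g:=\varphi\circ f$, so that $F_q=g^{-1}(\varphi(q))$ for $q\in U$; since $\diam F_q$ is at most a constant multiple of $\mu$, each fiber meeting a fixed small ball lies well inside $\hat U$, and I would work there throughout. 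The content of \eqref{eq:main} is that $g$ is a perturbation of $\hat\varphi$ of precisely the kind controlled by the modified notion of noncriticality, $|(g(x)-g(y))-(\hat\varphi(x)-\hat\varphi(y))|<\varkappa(\delta)|xy|$; together with the fact that $g$, being noncritical, is open with a linear co-Lipschitz rate, this is the input needed to rerun the arguments of \cite[\S11--12]{BGP} with $g$ replacing $\hat\varphi$, substituting \eqref{eq:main} for every appeal to the semiconcavity and the first variation formula of the distance functions $|\hat a_i\cdot|$. I expect this substitution to be the main obstacle: one must verify that each estimate there survives the perturbation, the delicate point being the length control in the gradient-flow constructions used below.

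For (1), the comparability of the induced intrinsic metric of a fiber of an almost regular map with the restricted one is among the results of \cite[\S11--12]{BGP}, and \eqref{eq:main} makes the same argument apply to $F_p$. The underlying picture is that a shortest path in $M$ joining $x,y\in F_p$ has, by \eqref{eq:main} together with the near-affineness of $\hat\varphi$ at scale $|xy|$, values of $g$ within $\varkappa(\delta)|xy|$ of $\varphi(p)$, hence stays $\varkappa(\delta)|xy|$-close to $F_p$, and can be deformed into $F_p$ by a gradient-type flow run for time $O(\varkappa(\delta)|xy|)$, which changes its length by at most $\varkappa(\delta)|xy|$; carrying this out so that the flowing time varies with bounded variation is exactly the length-control point above.

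For (2), the equality of the Hausdorff dimensions of $F_p$ and of a fiber of $\hat\varphi$, hence $\dim F_p=n-k$, and the finiteness and positivity of $\vol_{n-k}F_p$, are again taken from \cite[\S11--12]{BGP} via \eqref{eq:main} (positivity may alternatively be read off from an $(n-k,\varkappa(\delta))$-strained point of $F_p$). For the quantitative upper bound, I would use the $(1+\varkappa(\delta))$-bi-Lipschitz local product structure that the same analysis provides: fixing $\hat p\in F_p$ and setting $\rho=c\diam F_p$ with $c$ small, this structure identifies $g^{-1}(B(\varphi(p),\rho))$ with $F_p\times B^k(\rho)$ up to a $(1+\varkappa(\delta))$-bi-Lipschitz factor, so its $n$-dimensional Hausdorff measure is at least a constant multiple of $\vol_{n-k}(F_p)\,\rho^{k}$; on the other hand this set lies in $B(\hat p,2\diam F_p)$, whose measure is at most $C(\diam F_p)^{n}$ by the Bishop--Gromov inequality. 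Comparing the two yields $\vol_{n-k}F_p<C'(\diam F_p)^{n-k}$.

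For (3), let $q\in X$ be close enough to $p$ that $\varphi(q)$ lies in the range of $\varphi$. A gradient-type flow moving the value of $g$ from $\varphi(p)$ to $\varphi(q)$ yields a $(1+\varkappa(\delta))$-bi-Lipschitz homeomorphism $F_p\to F_q$ --- this is the fiber-to-fiber comparison of \cite[\S11--12]{BGP}, applied to $g$ through \eqref{eq:main} --- so $\vol_{n-k}F_q$ lies in $[(1+\varkappa(\delta))^{-(n-k)},(1+\varkappa(\delta))^{n-k}]\cdot\vol_{n-k}F_p$, which is the asserted estimate after renaming $\varkappa(\delta)$.
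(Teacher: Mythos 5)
Your treatment of parts (2) and (3) rests on results that do not exist. You repeatedly say these statements "are taken from \cite[\S11--12]{BGP} via \eqref{eq:main}," but for fibers of almost regular maps \cite{BGP} only gives that the \emph{topological} dimension is at most $n-k$ (\cite[11.8]{BGP}); the positivity of $\vol_{n-k}$ was an open conjecture (\cite[4.2]{MY}), and neither finiteness of $\vol_{n-k}$, nor a fiber-to-fiber bi-Lipschitz comparison, nor a bi-Lipschitz local product structure is available there or anywhere else. Consequently the two pillars of your argument fail. The $(1+\varkappa(\delta))$-bi-Lipschitz identification of $g^{-1}(B(\varphi(p),\rho))$ with $F_p\times B^k(\rho)$ is a metric strengthening of Yamaguchi's fibration conjecture: even \emph{topological} local triviality is only obtained in Theorem \ref{thm:main} under the extra hypothesis on $\vol_{n-1}\Sigma_p$, and no Lipschitz triviality is proved in this paper at all, so your upper bound via Bishop--Gromov on $B(\hat p,2\diam F_p)$ has no valid input. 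Likewise the "gradient-type flow" producing a $(1+\varkappa(\delta))$-bi-Lipschitz homeomorphism $F_p\to F_q$ for (3) is not available in this generality (the author explicitly does not know whether $\vol_{n-k}F_p$ is even continuous in $p$). Finally, your fallback for positivity, reading it off "from an $(n-k,\varkappa(\delta))$-strained point of $F_p$," presupposes exactly what must be proved: that the fiber contains such a point with strainers adapted to the fiber.

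For comparison, the paper's actual route is: for the lower bound, Lemma \ref{lem:low} produces, by a limiting/contradiction argument using $\vol_n B(p,D)>v$, a point of the fiber at definite distance from $p$; iterating a midpoint construction $(n-k)$ times yields an $(n,\varkappa(\delta))$-strainer at a point of the fiber, whose complementary coordinates restrict to an almost isometry of a piece of the fiber onto a ball in $\mathbb R^{n-k}$ (Proposition \ref{prop:low}). The upper bound is the hardest part: a discrete counting estimate $v_{n-k}$ for the set of $(\varepsilon,\delta,\rho)$-noncritical points in a fiber (Proposition \ref{prop:up}), proved by a reverse double induction on $k$ and on the dimension of the limit space, combining Perelman's assertions from \cite[3.4]{P1} with Yamaguchi's rescaling technique (Claim \ref{clm:up2}); the bound $C(\diam F_p)^{n-k}$ then follows by rescaling $M$ by $(\diam F_p)^{-1}$, not from a product structure. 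For (3), the paper constructs an almost isometry only from the $(n-k)$-strained part $F_v(n-k,\delta,\rho)$ onto an open subset of $F_u$ (Lemma \ref{lem:conti1}) and shows the complementary "bad" set has $\vol_{n-k}<\varkappa_\delta(\rho)$ (Lemma \ref{lem:conti2}, again via Proposition \ref{prop:up}), then combines this with the lower volume bound. Only your part (1) is essentially in line with the paper, and even there the mechanism differs: the paper proves the almost-Lipschitz-submersion property for maps satisfying \eqref{eq:main} (Proposition \ref{prop:int}) and builds the connecting curve \emph{inside} the fiber by successive points and the first variation formula (Corollary \ref{cor:int}), which avoids precisely the length-control problem of deforming a geodesic into the fiber that you flag and leave unresolved.
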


The property (1) was known for fibers of almost regular maps such as $\hat\varphi$ (\cite[11.11]{BGP}).
Thus, it also holds for the fibers of $\varphi\circ f$ satisfying the inequality \eqref{eq:main}.
This result was stated in the first version of \cite{X}, but has been deleted in the second version (due to an oversight as explained in the abstract on arXiv).
Regarding (2), it was only known that the topological dimension of fibers of almost regular maps such as $\hat\varphi$ is no greater than $n-k$ (\cite[11.8]{BGP}).
The left inequality in (2) was conjectured in \cite[4.2]{MY}.
The author does not know whether the volume of $F_p$ is actually continuous in (3).

\begin{rem}\label{rem:main}
Strictly speaking, we should not assert that our results (Theorem \ref{thm:main}, Corollary \ref{cor:main} and Theorem \ref{thm:fbr}) hold for the almost Lipschitz submersion constructed by Yamaguchi \cite{Y2}.
In fact, we construct the map $f$ of Theorem \ref{thm:subm} in a different way from \cite{Y2} and show the inequality \eqref{eq:main} for this new map but not for the original one.
However, both constructions are essentially the same and \eqref{eq:main} actually holds for Yamaguchi's map.
See Theorem \ref{thm:con} and Remark \ref{rem:con2}.
\end{rem}

The organization of this paper is as follows:
In \S\ref{sec:note}, we introduce some notation and conventions which will be used throughout this paper.
In \S\ref{sec:pre}, we recall some basic facts on Alexandrov spaces, especially strainers, and list a few lemmas from \cite{P1} for later use.
In \S\ref{sec:con}, we construct the map $f$ of Theorem \ref{thm:subm} and show the inequality \eqref{eq:main}.
In \S\ref{sec:per}, we generalize Perelman's fibration theorem for noncritical maps in terms of the inequality \eqref{eq:main} and prove Theorem \ref{thm:main} and Corollary \ref{cor:main}.
Most of the contents of this section are slight modifications of those of \cite[\S3]{P1}.
In \S\ref{sec:fbr}, we define the notion of almost regular maps in terms of the inequality \eqref{eq:main} and study fibers of them (note that our definition of almost regular maps is different from that of \cite{BGP}).
In \S\ref{sec:int}, we prove Theorem \ref{thm:fbr}(1).
In \S\ref{sec:low}, we prove the left inequality of Theorem \ref{thm:fbr}(2), and in \S\ref{sec:up}, we prove the right inequality.
In \S\ref{sec:conti}, we prove Theorem \ref{thm:fbr}(3).

\section{Notation and conventions}\label{sec:note}

The dimension of Alexandrov spaces is usually denoted by $n$.
The lower curvature bound $\kappa$ of Alexandrov spaces is fixed and omitted unless otherwise stated.
A positive integer $k$ is usually no greater than $n$ and is often less than $n$.
We always assume that a lower bound $\ell$ for the lengths of strainers is no greater than $1$ especially when $\kappa<0$ (indeed, all our arguments using strainers are local).

We denote by $c$ and $C$ various small and large positive constants, respectively.
Unless otherwise stated, such constants depend only on $n$ and $\kappa$.
If they depend on additional parameters, it will be indicated explicitly, like $c(\varepsilon)$.

We always assume that a positive number $\delta$ is smaller than some constant $c_0$ depending only on $n$ and $\kappa$.
We denote by $\varkappa(\delta)$ various positive functions such that $\varkappa(\delta)\to0$ as $\delta\to0$.
Unless otherwise stated, $\varkappa$ depends only on $n$ and $\kappa$.
In this case, we often assume that $\varkappa(\delta)$ is also smaller than $c_0$.

In \S\ref{sec:lem}, \S\ref{sec:per} and \S\ref{sec:up}, we use another positive number $\varepsilon$.
In these sections, $\varkappa$ may depend additionally on $\varepsilon$.
Furthermore, we assume that $\delta$ is smaller than some constant $c_0(\varepsilon)$ depending only on $n$, $\kappa$ and $\varepsilon$, which is much smaller than any other $c(\varepsilon)$ appearing in these sections.
We often assume that $\varkappa(\delta)$ is also smaller than $c_0(\varepsilon)$.

The $m$-dimensional Hausdorff measure is denoted by $\vol_m$.
We use the standard Euclidean norm on $\mathbb R^k$ except in \S\ref{sec:per}, where we use the maximum norm instead.

\section{Preliminaries}\label{sec:pre}

We first recall some basic facts on Alexandrov spaces.
See \cite{BGP} or \cite{BBI} for more details.

Let $M$ be an $n$-dimensional Alexandrov space with curvature $\ge\kappa$.
For a geodesic triangle $\triangle pqr$ in $M$ with vertices $p$, $q$ and $r$, we denote by $\tilde\triangle pqr$ a geodesic triangle with the same sidelengths in the simply-connected complete surface of constant curvature $\kappa$.
Then, by the definition of an Alexandrov space, the natural correspondence from $\triangle pqr$ to $\tilde\triangle pqr$ is nonexpanding.
Let $\angle qpr$ denote the angle of $\triangle pqr$ at $p$ and $\tilde\angle qpr$ the corresponding angle of $\tilde\triangle pqr$.
Then, the Alexandrov convexity implies that $\angle qpr\ge\tilde\angle qpr$.

For $p\in M$, we denote by $\Sigma_p$ the space of directions at $p$.
Then, $\Sigma_p$ is an $(n-1)$-dimensional compact Alexandrov space with curvature $\ge1$.
For a point $q\in M$, we denote by $q'_p\in\Sigma_p$ one of the directions of shortest paths from $p$ to $q$.
Furthermore, for a closed subset $A\subset M$, we denote by $A'_p\subset\Sigma_p$ the set of all directions of shortest paths from $p$ to $A$.
We sometimes use the notation $Q'_p\subset\Sigma_p$ to denote the set of all directions of shortest paths from $p$ to $q$ by regarding $Q$ as $\{q\}$.

The class of all Alexandrov spaces with dimension $\le n$, curvature $\ge\kappa$ and diameter $\le D$ is compact with respect to the Gromov-Hausdorff distance.
Furthermore, the class of all pointed Alexandrov spaces with dimension $\le n$ and curvature $\ge\kappa$ is compact with respect to the pointed Gromov-Hausdorff topology.

\subsection{Strainers}\label{sec:str}

Let $M$ be an $n$-dimensional Alexandrov space.

\begin{dfn}\label{dfn:str}
A point $p\in M$ is said to be \textit{$(k,\delta)$-strained} if there exists $k$ pairs $\{(a_i,b_i)\}_{i=1}^k$ of points in $M$ such that
\begin{gather*}
\tilde\angle a_ipb_i>\pi-\delta,\\
\tilde\angle a_ipa_{i'},\tilde\angle a_ipb_{i'},\tilde\angle b_ipb_{i'}>\pi/2-\delta
\end{gather*}
for all $1\le i\neq i'\le k$.
The collection $\{(a_i,b_i)\}_{i=1}^k$ is called a \textit{$(k,\delta)$-strainer} at $p$.
The number $\min_i\{|a_ip|,|b_ip|\}$ is called the \textit{length} of this strainer.
\end{dfn}

We now describe a few basic properties of strainers and strained points.

\begin{lem}\label{lem:str}
Let $(a,b)$ be a $(1,\delta)$-strainer at $p\in M$ with length $>\ell$.
Then, we have
\[\left|\tilde\angle axy-\angle axy\right|<\varkappa(\delta)\]
for any $x,y\in B(p,\ell\delta)$ and any shortest paths $xa$, $xy$.
\end{lem}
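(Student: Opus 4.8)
The plan is to promote the $(1,\delta)$-strainer $(a,b)$ at $p$ to a near-straight angle $\tilde\angle axb\approx\pi$ at every $x$ near $p$, and then to work in the space of directions $\Sigma_x$, where this produces a pair of almost-antipodal directions. First a harmless reduction: shortening $a$ and $b$ along the geodesics $[pa]$, $[pb]$ keeps $(a,b)$ a $(1,\delta)$-strainer of length $\ge\ell$ (the comparison angle $\tilde\angle apb$ only increases under such shortening, by the monotonicity of comparison angles), so I may assume $|pa|,|pb|\le 1$, which keeps all triangles below at bounded scale and makes the model-space comparison estimates uniform in $\kappa$. Now, from $\tilde\angle apb>\pi-\delta$ and $|pa|,|pb|>\ell$, a comparison computation in the model surface of curvature $\kappa$ bounds the excess, $|pa|+|pb|-|ab|<\varkappa(\delta)\min(|pa|,|pb|)$ --- the key being that the excess scales with the shorter leg, so this is uniform in $\ell$. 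Since $|px|\le\ell\delta\le\delta\min(|pa|,|pb|)$, moving the basepoint to any $x\in B(p,\ell\delta)$ changes each leg length and the excess by at most $\varkappa(\delta)\min(|pa|,|pb|)$, hence $|xa|+|xb|-|ab|<\varkappa(\delta)\min(|xa|,|xb|)$; feeding this back into the law of cosines gives $\tilde\angle axb>\pi-\varkappa(\delta)$, and therefore $\angle axb\ge\tilde\angle axb>\pi-\varkappa(\delta)$ by the Alexandrov convexity.

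Next, fix $x,y\in B(p,\ell\delta)$, the given shortest paths $xa$, $xy$, and any shortest path $xb$; the corresponding directions $a'_x,y'_x,b'_x\in\Sigma_x$ satisfy $|a'_xb'_x|=\angle axb>\pi-\varkappa(\delta)$. Since $\Sigma_x$ has curvature $\ge 1$, every geodesic triangle in it has perimeter $\le 2\pi$; combined with the triangle inequality in $\Sigma_x$ this sandwiches $\pi-\varkappa(\delta)<\angle axy+\angle bxy<\pi+\varkappa(\delta)$. I claim the comparison angles obey the same sandwich. The upper bound is immediate from $\tilde\angle axy+\tilde\angle bxy\le\angle axy+\angle bxy$. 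For the lower bound: if $\tilde\angle axy+\tilde\angle bxy>\pi$ there is nothing to prove; if $\tilde\angle axy+\tilde\angle bxy\le\pi$, glue the comparison triangles $\tilde\triangle axy$ and $\tilde\triangle bxy$ along the common edge $[\bar x\bar y]$ with $\bar a,\bar b$ on opposite sides, and note the resulting angle at $\bar x$ is $\le\pi$, so Alexandrov's lemma (comparison with the configuration obtained by straightening the bent path $\bar a\bar x\bar b$) forces $\tilde\angle axb\le\tilde\angle axy+\tilde\angle bxy$, and the left side exceeds $\pi-\varkappa(\delta)$ as established above. Thus $\pi-\varkappa(\delta)<\tilde\angle axy+\tilde\angle bxy<\pi+\varkappa(\delta)$ as well.

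The lemma then follows: using the upper bound on $\angle axy+\angle bxy$, then $\angle bxy\ge\tilde\angle bxy$, then the lower bound on $\tilde\angle axy+\tilde\angle bxy$,
\[\angle axy\le(\pi+\varkappa(\delta))-\angle bxy\le(\pi+\varkappa(\delta))-\tilde\angle bxy<\tilde\angle axy+\varkappa(\delta),\]
while $\angle axy\ge\tilde\angle axy$ by the Alexandrov convexity, so $|\tilde\angle axy-\angle axy|<\varkappa(\delta)$. The step I expect to be the real obstacle is the lower bound $\tilde\angle axy+\tilde\angle bxy>\pi-\varkappa(\delta)$: comparison angles satisfy no triangle inequality, so one genuinely needs Alexandrov's lemma (a ``development'' of the bent geodesic through $x$) to transfer the near-$\pi$ estimate from the honest angles in $\Sigma_x$ to the comparison angles. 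The other delicate point is the bookkeeping in the first paragraph --- every estimate must be kept relative to $\min(|xa|,|xb|)$, and the scale kept bounded via the initial reduction, so that $\varkappa$ depends only on $n$ and $\kappa$. (One could instead argue by contradiction and pointed Gromov--Hausdorff compactness: a rescaled limit of would-be counterexamples has a pair of points at distance $\pi$ in a space of directions, which forces honest and comparison angles to coincide; but the direct argument above keeps all constants explicit.)
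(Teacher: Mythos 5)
Your first paragraph and the upper bound in the second are fine (and match the standard route: excess estimate gives $\tilde\angle axb>\pi-\varkappa(\delta)$ at every $x\in B(p,\ell\delta)$, and the perimeter bound in $\Sigma_x$ gives $\angle axy+\angle bxy<\pi+\varkappa(\delta)$). The gap is exactly at the step you yourself flag as the crux: the claim that if $\tilde\angle axy+\tilde\angle bxy\le\pi$ then gluing the two comparison triangles along $[\bar x\bar y]$ and invoking Alexandrov's lemma yields $\tilde\angle axb\le\tilde\angle axy+\tilde\angle bxy$. No such implication holds, even in an Alexandrov space and even when $\tilde\angle axb=\pi$. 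Concretely, on the round sphere of intrinsic diameter $2$ take $a,b$ antipodal and $x,y$ on the equator at distance $1$; then $|xa|=|xb|=|ya|=|yb|=|xy|=1$, $|ab|=2$, so (with $\kappa\le0$ comparisons) $\tilde\angle axy=\tilde\angle bxy\le\pi/3$, their sum is $\le 2\pi/3\le\pi$, yet $\tilde\angle axb=\pi$. This configuration satisfies everything your gluing step uses (a strainer-quality angle $\tilde\angle axb$ at $x$, the angle-sum condition at $\bar x$), so the step cannot be correct as stated; the misapplication is that Alexandrov's lemma compares a glued configuration with the straightened one under an angle condition at the far endpoint of the common side, and in no version does it produce the inequality you want from the data you have. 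What is really missing is any use, at this step, of the hypothesis that \emph{both} $x$ and $y$ lie in $B(p,\ell\delta)$, i.e.\ that $|xy|\le 2\ell\delta$ is tiny compared with the legs $|xa|,|xb|,|ya|,|yb|\ge\ell(1-\delta)$: the endpoint excess bound $|xa|+|xb|-|ab|<\varkappa(\delta)\ell$ alone is useless when $|xy|\ll\varkappa(\delta)\ell$, and the six pairwise distances together with your gluing argument simply do not determine the lower bound.

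The step can be repaired along standard lines (this is essentially \cite[5.6]{BGP}, \cite[10.8.13]{BBI}, which the paper cites instead of reproving). One way: since $y\in B(p,\ell\delta)$ as well, the excess estimate gives $\tilde\angle ayb>\pi-\varkappa(\delta)$, and the comparison of angles with actual angles plus the perimeter bound in $\Sigma_y$ gives the quadruple inequality $\tilde\angle ayx+\tilde\angle byx+\tilde\angle ayb\le 2\pi$, hence $\tilde\angle ayx+\tilde\angle byx\le\pi+\varkappa(\delta)$; on the other hand, in each model triangle $\tilde\triangle axy$, $\tilde\triangle bxy$ the two angles adjacent to the short side $[xy]$ sum to at least $\pi-\varkappa(\delta)$ (the apex angle at $a$ or $b$ is $O(|xy|/\ell)=O(\delta)$ and the curvature defect at scale $\le1$ is $O(|xy|)$), so $\tilde\angle axy+\tilde\angle bxy\ge 2\pi-2\varkappa(\delta)-(\tilde\angle ayx+\tilde\angle byx)\ge\pi-\varkappa(\delta)$. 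Alternatively one can integrate the first variation of $d_a+d_b$ along the shortest path $[xy]$, using $\angle azb>\pi-\varkappa(\delta)$ at every intermediate point $z$. Either way, the smallness of $|xy|$ relative to the strainer length and the strainer estimate at $y$ (or along $[xy]$) must enter; your argument, which uses only the gluing at $\bar x$, cannot deliver the lower bound, while the rest of your write-up then goes through verbatim once that bound is available.
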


See \cite[5.6]{BGP} or \cite[10.8.13]{BBI} for the proof.

Let $S^k(\Sigma)$ be the $k$-fold spherical suspension over a space $\Sigma$ of curvature $\ge1$ (see \cite[4.3.1]{BGP} for the definition).
Note that it is isometric to the spherical join of $\Sigma$ and the unit sphere $\mathbb S^{k-1}$ of dimension $k-1$.
Thus, $S^k(\Sigma)$ contains an isometric copy of $\mathbb S^{k-1}$ (we identify them).
Let $\{(\xi_i,\eta_i)\}_{i=1}^k$ be a collection of pairs of points in $\mathbb S^{k-1}\subset S^k(\Sigma)$ such that
\[|\xi_i\eta_i|=\pi,\quad|\xi_i\xi_{i'}|=|\xi_i\eta_{i'}|=|\eta_i\eta_{i'}|=\pi/2\]
for all $1\le i\neq i'\le k$.
We call such a collection an \textit{orthogonal $k$-frame} of $S^k(\Sigma)$.
Conversely, if a space of curvature $\ge1$ has such a collection, then it is isometric to a $k$-fold spherical suspension (see \cite[10.4.3]{BBI}).

The space of directions at a strained point is close to a suspension in the following sense:

\begin{lem}\label{lem:susp}
Let $p\in M$ be a $(k,\delta)$-strained point with a strainer $\{(a_i,b_i)\}_{i=1}^k$.
Let $(A_i)'_p$, $(B_i)'_p$ denote the set of all directions of shortest paths from $p$ to $a_i$, $b_i$, respectively.
Then, there exists a $\varkappa(\delta)$-approximation $\Sigma_p\to S^k(\Sigma)$ which sends $\{((A_i)'_p,(B_i)'_p)\}_{i=1}^k$ to an orthogonal $k$-frame of $S^k(\Sigma)$, where $\Sigma$ is a space of curvature $\ge1$ and dimension $\le n-k-1$ (possibly empty).
In particular, if $k=n$, then $\Sigma_p$ is $\varkappa(\delta)$-close to $\mathbb S^{n-1}$ in the Gromov-Hausdorff distance.
\end{lem}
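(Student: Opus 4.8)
The plan is to prove the lemma by contradiction, using the precompactness of the class of Alexandrov spaces of dimension $\le n-1$ and curvature $\ge1$ together with the rigidity of spherical suspensions quoted above; this way I never have to track the function $\varkappa$ explicitly. Fix $\varepsilon_0>0$ and suppose, for contradiction, that for arbitrarily small $\delta$ there is an $n$-dimensional Alexandrov space $M$ with a $(k,\delta)$-strained point $p$ whose space of directions admits no Gromov--Hausdorff approximation onto any $S^k(\Sigma)$ (with $\Sigma$ of curvature $\ge1$ and dimension $\le n-k-1$) carrying $\{((A_i)'_p,(B_i)'_p)\}_{i=1}^k$ to within $\varepsilon_0$ of an orthogonal $k$-frame. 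This produces a sequence $M_j$, $p_j$ with $(k,\delta_j)$-strainers, $\delta_j\to0$, with this failure property.

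First I would record what the strainer inequalities say inside $\Sigma_{p_j}$, which has curvature $\ge1$ and dimension $\le n-1$. Since $\angle\ge\tilde\angle$ for any choice of shortest paths, every $\xi\in(A_i)'_{p_j}$ and $\eta\in(B_i)'_{p_j}$ satisfy $|\xi\eta|>\pi-\delta_j$, and any two of the $2k$ sets $(A_i)'_{p_j}$, $(B_i)'_{p_j}$ with distinct indices lie at distance $>\pi/2-\delta_j$ from each other. I would also check that each of these sets has diameter $<\varkappa(\delta_j)$: for $\xi,\xi'\in(A_i)'_{p_j}$ and $\eta\in(B_i)'_{p_j}$, the Toponogov hinge comparison in $\Sigma_{p_j}$ applied to the hinge at $\eta$ with legs $|\xi\eta|,|\xi'\eta|>\pi-\delta_j$ bounds $|\xi\xi'|$ by the corresponding distance in the $\mathbb S^2$-model hinge, and on $\mathbb S^2$ two points at distance $>\pi-\delta_j$ from a common point lie in the $\delta_j$-ball about its antipode, so $|\xi\xi'|<2\delta_j$. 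Choosing representatives $\xi_i^{(j)}\in(A_i)'_{p_j}$, $\eta_i^{(j)}\in(B_i)'_{p_j}$, they thus form an ``almost orthogonal $k$-frame'' of $\Sigma_{p_j}$.

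By precompactness, after passing to a subsequence we have $\Sigma_{p_j}\to Y$ in the Gromov--Hausdorff topology, with $Y$ of curvature $\ge1$ and dimension $\le n-1$, and we may assume $\xi_i^{(j)}\to\xi_i$, $\eta_i^{(j)}\to\eta_i$ in $Y$. The lower bounds pass to the limit, so in $Y$ we have $|\xi_i\eta_i|\ge\pi$ (hence $=\pi$, as $\diam Y\le\pi$) and all the cross-distances $\ge\pi/2$. Here is the key point: each equality $|\xi_{i'}\eta_{i'}|=\pi$ forces $Y$, by the rigidity case of the bound $\diam\le\pi$ for curvature $\ge1$, to be the spherical suspension with poles $\xi_{i'}$ and $\eta_{i'}$, so $|\zeta\xi_{i'}|+|\zeta\eta_{i'}|=\pi$ for every $\zeta\in Y$; taking $\zeta=\xi_i$ or $\eta_i$ and using the bound $\ge\pi/2$ on the remaining summand upgrades all the cross-distances to exactly $\pi/2$. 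Thus $\{(\xi_i,\eta_i)\}_{i=1}^k$ is a genuine orthogonal $k$-frame of $Y$, so by the cited converse \cite[10.4.3]{BBI} there is an isometry $Y\cong S^k(\Sigma)$ with $\Sigma$ of curvature $\ge1$ carrying the $\xi_i,\eta_i$ to an orthogonal $k$-frame; comparing dimensions forces $\dim\Sigma\le n-k-1$, and when $k=n$ it forces $\Sigma=\emptyset$, i.e.\ $Y=\mathbb S^{n-1}$. Finally, for $j$ large, composing a Gromov--Hausdorff approximation $\Sigma_{p_j}\to Y=S^k(\Sigma)$ with the convergence $\xi_i^{(j)}\to\xi_i$, $\eta_i^{(j)}\to\eta_i$ and using $\diam(A_i)'_{p_j},\diam(B_i)'_{p_j}\to0$, we obtain an approximation sending $\{((A_i)'_{p_j},(B_i)'_{p_j})\}_{i=1}^k$ to within $\varepsilon_0$ of an orthogonal $k$-frame of $S^k(\Sigma)$ --- contradicting the choice of $M_j,p_j$.

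The hinge estimate and the limit bookkeeping are routine; the one step that requires genuine care---and which I regard as the heart of the argument---is the degeneration in the limit, namely that the almost-orthogonality relations must collapse to the exact ones. For this it is essential that the upper bounds on the cross-distances are not assumed by hand but are produced automatically by the suspension rigidity forced by $|\xi_i\eta_i|=\pi$. The only other thing to watch is the dimension count for $\Sigma$, in particular the degenerate case $k=n$ where $\Sigma$ must be empty and $Y=\mathbb S^{n-1}$; and one should note that since the whole argument takes place in a fixed compact class, the resulting $\varkappa$ depends only on $n$ (the bound $\kappa$ plays no role here, as spaces of directions have curvature $\ge1$ regardless).
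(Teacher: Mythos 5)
Your argument is correct, and it is the same approach the paper intends: the paper merely remarks that the lemma ``is easy by contradiction'' (citing \cite[3.2]{F}), and your compactness-plus-rigidity argument --- small diameter of the direction sets, a limit space of curvature $\ge1$ with $|\xi_i\eta_i|=\pi$, suspension rigidity upgrading the cross-distances to exactly $\pi/2$, then \cite[10.4.3]{BBI} and the dimension count --- is precisely that standard contradiction proof, carried out carefully.
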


The proof is by contradiction (see \cite[3.2]{F} for instance).

\begin{rem}\label{rem:susp}
Furthermore, if $k<n$, then $\Sigma$ is nonempty.
Indeed, $\mathbb S^{k-1}$ cannot be a limit of a collapsing sequence of Alexandrov spaces of curvature $\ge1$.
This follows, for instance, from the fact that such a limit space either has diameter no greater than $\pi/2$ or contains a proper extremal subset (\cite[3.2]{P3}).
In particular, if $k=n-1$, then $\Sigma_p$ is $\varkappa(\delta)$-close to $\mathbb S^{n-1}$ or the closed unit hemisphere $\mathbb S^{n-1}_+$.
\end{rem}

Let $0<\varepsilon<1$.
Let $X$ and $Y$ be metric spaces.
A map $f:X\to Y$ is called an \textit{$\varepsilon$-almost isometry} if it is surjective and $||f(x)f(y)|/|xy|-1|<\varepsilon$ for any $x,y\in X$.
Let $U$ be an open subset of $X$.
A map $f:U\to Y$ is called an \textit{$\varepsilon$-open map} if for any $x\in U$ and $v\in Y$ such that $\bar B(x,\varepsilon^{-1}|f(x)v|)\subset U$, there exists $y\in U$ such that $f(y)=v$ and $\varepsilon|xy|\le|f(x)v|$.

For a $(k,\delta)$-strainer $\{(a_i,b_i)\}_{i=1}^k$ in $M$, we call $f=(|a_1\cdot|,\dots,|a_k\cdot|):M\to\mathbb R^k$ the \textit{distance coordinate} associated with this strainer.
The above two lemmas imply the following:

\begin{prop}\label{prop:str}
Let $f:M\to\mathbb R^k$ be the distance coordinate associated with a $(k,\delta)$-strainer at $p$ with length $>\ell$.
Then,
\begin{enumerate}
\item if $k=n$, then $f$ is a $\varkappa(\delta)$-almost isometry from $B(p,\ell\delta)$ to an open subset of $\mathbb R^n$;
\item if $k<n$, then $f$ is a $(1+\varkappa(\delta))$-Lipschitz and $(1-\varkappa(\delta))$-open map on $B(p,\ell\delta)$.
\end{enumerate}
\end{prop}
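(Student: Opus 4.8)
The plan is to deduce both parts from Lemmas~\ref{lem:str} and~\ref{lem:susp} by a first-variation estimate for distance functions combined with the near-orthonormality of strainer directions. First I would record that every $x\in B(p,\ell\delta)$ is itself $(k,\varkappa(\delta))$-strained by the same pairs $\{(a_i,b_i)\}$, with length $>\ell/2$: the sidelengths $|a_ip|,|b_ip|$ exceed $\ell$ while $|px|<\ell\delta$, so replacing the vertex $p$ by $x$ changes each comparison angle $\tilde\angle a_ipa_{i'}$, $\tilde\angle a_ipb_{i'}$, $\tilde\angle a_ipb_i$ by at most $\varkappa(\delta)$ (these angles are uniformly continuous in the sidelengths once the latter are bounded below), so the strainer inequalities at $p$ survive at $x$ with $\delta$ replaced by $\varkappa(\delta)$. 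Consequently Lemma~\ref{lem:susp} applies at every such $x$: there is a $\varkappa(\delta)$-approximation $\Sigma_x\to S^k(\Sigma)=\mathbb S^{k-1}*\Sigma$ carrying each $(A_i)'_x$ into a $\varkappa(\delta)$-ball about the $i$-th vertex $\xi_i\in\mathbb S^{k-1}$ of an orthogonal $k$-frame; in particular $\diam(A_i)'_x<\varkappa(\delta)$, so $\angle a_ixy$ is well defined up to $\varkappa(\delta)$ regardless of the chosen shortest path $xa_i$.

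Next I would prove two infinitesimal facts. The first is the variation formula
\[\bigl|\,(|a_ix|-|a_iy|)-|xy|\cos\angle a_ixy\,\bigr|<\varkappa(\delta)|xy|\]
for $x,y\in B(p,\ell\delta)$: expanding the law of cosines in the model surface for $\triangle a_ixy$ in the small parameter $|xy|<2\ell\delta$ (note $|a_ix|>\ell/2$) gives $|a_ix|-|a_iy|=|xy|\cos\tilde\angle a_ixy+O(|xy|^2/\ell)$ with $|xy|^2/\ell<\varkappa(\delta)|xy|$, and Lemma~\ref{lem:str} applied to the $(1,\delta)$-strainer $(a_i,b_i)$ converts $\tilde\angle a_ixy$ into $\angle a_ixy$. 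The second is orthonormality: in $S^k(\Sigma)$ a point $w=(\cos\rho)\,\sigma\oplus(\sin\rho)\,\tau$ with $\sigma\in\mathbb S^{k-1}$ satisfies $\cos|w\xi_i|=\cos\rho\,\langle\sigma,e_i\rangle$, so $\sum_{i=1}^k\cos^2|w\xi_i|=\cos^2\rho\le1$, with equality when $k=n$ (then $\Sigma=\emptyset$ and $\rho\equiv0$). Pulling this back through the approximation above, for every $x\in B(p,\ell\delta)$ and every direction $\xi\in\Sigma_x$ one has $\sum_{i=1}^k\cos^2\angle a_ix\xi\le1+\varkappa(\delta)$, and moreover $\sum_{i=1}^k\cos^2\angle a_ix\xi=1\pm\varkappa(\delta)$ when $k=n$; conversely, for every $u\in\mathbb S^{k-1}$ there is $\xi\in\Sigma_x$ with $\cos\angle a_ix\xi=u_i\pm\varkappa(\delta)$ for all $i$ (pull back the point $-u\in\mathbb S^{k-1}\subset S^k(\Sigma)$, which makes sense also when $k=n$).

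Part (1) then follows by squaring the variation formula with $\xi=y'_x$ and summing: $|f(x)-f(y)|^2=|xy|^2\sum_i\cos^2\angle a_ixy+\varkappa(\delta)|xy|^2$, which is $\le(1+\varkappa(\delta))|xy|^2$ in all cases (the Lipschitz half of (2)) and is also $\ge(1-\varkappa(\delta))|xy|^2$ when $k=n$, so $f$ is then a $\varkappa(\delta)$-almost isometry on $B(p,\ell\delta)$. For the openness in (2) (which also shows the image in (1) is open), take $x_0\in B(p,\ell\delta)$ and $v\in\mathbb R^k$ with $\bar B(x_0,(1-\varkappa(\delta))^{-1}|f(x_0)v|)\subset B(p,\ell\delta)$ and $f(x_0)\ne v$, and build a broken geodesic $x_0,x_1,x_2,\dots$ inductively: with $u_j=(v-f(x_j))/|v-f(x_j)|$, use the converse above to pick $\xi_j\in\Sigma_{x_j}$ with $\cos\angle a_ix_j\xi_j=-(u_j)_i\pm\varkappa(\delta)$, and let $x_{j+1}$ be the point at distance $t_j=\tfrac12|v-f(x_j)|$ from $x_j$ along a shortest path in (a direction arbitrarily close to) $\xi_j$. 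The variation formula gives $f(x_{j+1})=f(x_j)+t_ju_j+O(\varkappa(\delta)t_j)$, hence $|v-f(x_{j+1})|\le(1-c)|v-f(x_j)|$ with $c=\tfrac12(1-\varkappa(\delta))$; thus $|v-f(x_j)|\to0$ geometrically, $\sum_j t_j\le(1-\varkappa(\delta))^{-1}|f(x_0)v|$, the broken geodesic stays inside $\bar B(x_0,(1-\varkappa(\delta))^{-1}|f(x_0)v|)\subset B(p,\ell\delta)$ (so the two infinitesimal facts remain available at each $x_j$), and it converges to a point $y$ with $f(y)=v$ and $(1-\varkappa(\delta))|x_0y|\le|f(x_0)v|$.

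The main obstacle is this last step: the purely infinitesimal surjectivity that Lemma~\ref{lem:susp} provides must be upgraded to an actual solution of $f(y)=v$, which requires realizing the iteration by genuine shortest paths and arranging the error estimates uniformly in the base point so that it converges, achieves the stated length bound, and never leaves $B(p,\ell\delta)$ — outside of which the local hypotheses (Lemmas~\ref{lem:str} and~\ref{lem:susp} and the variation formula) are not in force. Everything else is routine given the two lemmas and the elementary spherical-join identity.
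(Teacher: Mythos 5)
Your reduction of both parts to Lemmas \ref{lem:str} and \ref{lem:susp} is the standard route (the paper itself gives no proof here, citing \cite[9.4]{BGP}, \cite[10.9.16]{BBI} for (1) and \cite[3.3]{F} for (2)), and your first two paragraphs are essentially sound: points of $B(p,\ell\delta)$ remain $(k,\varkappa(\delta))$-strained by the same pairs, the law-of-cosines expansion combined with Lemma \ref{lem:str} gives the uniform first-variation estimate with error $\varkappa(\delta)|xy|$, and the spherical-join identity pulled back through the approximation of Lemma \ref{lem:susp} gives $\sum_i\cos^2\angle a_ix\xi\le1+\varkappa(\delta)$ for every direction (with near-equality when $k=n$) together with the almost-realizability of any prescribed vector of cosines; squaring and summing the variation formula then yields the $(1+\varkappa(\delta))$-Lipschitz bound and, for $k=n$, the two-sided bound. (Minor slip: to obtain $\cos\angle a_ix\xi\approx u_i$ you should pull back $u$, not $-u$.)

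The genuine gap is in the openness argument, and it is precisely the step you flag at the end without resolving: you let $x_{j+1}$ be the point at distance $t_j=\tfrac12|v-f(x_j)|$ from $x_j$ along a shortest path in a direction arbitrarily close to $\xi_j$. In an Alexandrov space the directions of shortest paths are dense in $\Sigma_{x_j}$, but there is no uniform lower bound on the length of a shortest path issuing in a nearly prescribed direction; a geodesic in a direction close to $\xi_j$ may terminate long before distance $t_j$, so the prescribed step, and with it the geometric-series bookkeeping, need not exist. Two standard repairs: (i) take arbitrarily short steps and invoke Lemma \ref{lem:open} (the metric version of \cite[2.1.1]{P1} already stated in the paper): your infinitesimal statement plus the uniform first-variation estimate show that for every $x\in B(p,\ell\delta)$ with $f(x)\neq v$ there is some $y\neq x$ (as close to $x$ as you like, with $y'_x$ near the chosen direction) satisfying $|f(y)-v|-|f(x)-v|\le-(1-\varkappa(\delta))|xy|$, which is exactly the hypothesis of that lemma with $\varepsilon=1-\varkappa(\delta)$, and its completeness/limit argument replaces your explicit iteration; or (ii) argue as in \cite[9.4]{BGP}, adjusting one coordinate at a time by moving along shortest paths toward the strainer points $a_i$ or $b_i$ themselves, which certainly exist and have length $>\ell/2$, the straining inequalities controlling the change of the remaining coordinates. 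With either repair your outline becomes a complete proof; as written, the fixed-length iteration is not justified.
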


For the proof of (1), see \cite[9.4]{BGP} or \cite[10.9.16]{BBI}.
For the proof of (2), see \cite[3.3]{F} for instance.

\subsection{Lemmas from Perelman's paper}\label{sec:lem}

We list a few lemmas on spaces of curvature $\ge1$ from \cite[\S2]{P1}, which will be used in \S\ref{sec:per} and \S\ref{sec:up}.

Let $\Sigma$ be an $(n-1)$-dimensional Alexandrov space with curvature $\ge1$.
Here we always assume that $\delta$ is sufficiently small compared with $\varepsilon$ (see \S\ref{sec:note}).

\begin{lem}[{\cite[2.2]{P1}}]\label{lem:cnt}
$\Sigma$ cannot contain $n+2$ compact subsets $\{A_i\}_{i=1}^{n+2}$ such that $|A_iA_j|>\pi/2-\delta$ for any $i\neq j$ and $|A_1A_i|>\pi/2+\varepsilon$ for any $i\ge 3$.
\end{lem}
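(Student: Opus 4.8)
The plan is to argue by contradiction and compactness, reducing to an explicit statement about the unit sphere $\mathbb{S}^{n-1}$. Suppose such compact subsets $A_1,\dots,A_{n+2}$ exist in some $(n-1)$-dimensional $\Sigma$ with curvature $\ge 1$, for every $\delta>0$. Passing to a sequence $\delta_j\to 0$ and using the compactness of the class of $(n-1)$-dimensional Alexandrov spaces with curvature $\ge 1$ (diameter is automatically $\le\pi$), we obtain a limit space $\Sigma_\infty$ of dimension $\le n-1$, together with limit compact subsets $A_i^\infty$ satisfying $|A_i^\infty A_j^\infty|\ge\pi/2$ for all $i\neq j$ and $|A_1^\infty A_i^\infty|\ge\pi/2+\varepsilon$ for all $i\ge 3$. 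Picking one point $\xi_i\in A_i^\infty$ for each $i$, we get $n+2$ points in $\Sigma_\infty$ with pairwise distances $\ge\pi/2$ and with $|\xi_1\xi_i|\ge\pi/2+\varepsilon$ for $i\ge 3$.

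The next step is to extract enough orthogonality to force $\Sigma_\infty$ to be a sphere of the right dimension, and then derive a contradiction from the strict inequality $|\xi_1\xi_i|\ge\pi/2+\varepsilon$. First I would discard $\xi_2$ and look at the remaining configuration: $\xi_1$ together with $\xi_3,\dots,\xi_{n+2}$, that is, $n+1$ points. Among these, $\xi_3,\dots,\xi_{n+2}$ are $n$ points with pairwise distances $\ge\pi/2$; by the standard fact that $m$ points at pairwise distance $\ge\pi/2$ in an Alexandrov space of curvature $\ge 1$ and dimension $<m-1$ cannot exist (they would span an isometric $\mathbb{S}^{m-1}$, raising the dimension), and since $\dim\Sigma_\infty\le n-1$, the $n$ points $\xi_3,\dots,\xi_{n+2}$ must actually realize pairwise distances exactly $\pi/2$ and their span is an isometrically embedded $\mathbb{S}^{n-1}$; combined with $\dim\Sigma_\infty\le n-1$ this gives $\Sigma_\infty=\mathbb{S}^{n-1}$. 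But in $\mathbb{S}^{n-1}$, the point $\xi_1$ lies in the ambient space spanned by $\xi_3,\dots,\xi_{n+2}$ (which is everything), and a point of $\mathbb{S}^{n-1}$ at distance $\ge\pi/2$ from all of $n$ mutually orthogonal points $\xi_3,\dots,\xi_{n+2}$ must be the "negative orthant" vertex, forcing at least one of the distances $|\xi_1\xi_i|$, $i\ge 3$, to equal exactly $\pi/2$ — in fact, if all were $>\pi/2$ then writing $\xi_1=\sum c_i\xi_i$ with $\sum c_i^2=1$ we would need all $c_i<0$, whence $\langle\xi_1,\xi_1\rangle=\sum c_i^2$, but the constraint $|\xi_1\xi_i|\ge\pi/2+\varepsilon$ means $c_i=\cos|\xi_1\xi_i|\le\cos(\pi/2+\varepsilon)=-\sin\varepsilon<0$, and then $\sum c_i^2\ge n\sin^2\varepsilon$ is not obviously a contradiction by itself, so one must be more careful.

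The cleaner way to close the argument is to keep $\xi_2$ in play: I would instead use that $\xi_2,\xi_3,\dots,\xi_{n+2}$ are $n+1$ points at pairwise distance $\ge\pi/2$ in a space of dimension $\le n-1$, which already forces a contradiction, because $n+1$ points at pairwise distance $\ge\pi/2$ require dimension $\ge n$. Thus the genuine content is just that $n+1$ mutually "almost-orthogonal" compact subsets already cannot fit; the extra $(n+2)$-th subset $A_1$ with the strict bound $|A_1A_i|>\pi/2+\varepsilon$ for $i\ge 3$ is what makes the bound robust under the $\varkappa(\delta)$-perturbation even when one of the $n+1$ near-orthogonal subsets is allowed to degenerate. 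Concretely: among $A_2,\dots,A_{n+2}$ (that is $n+1$ subsets) one gets after passing to the limit a configuration of $n+1$ points at pairwise distance exactly $\pi/2$ in $\mathbb{S}^{n-1}$, impossible; the role of the $\varepsilon$-separation of $A_1$ is to ensure that in the perturbative (pre-limit) regime one cannot "cheat" by letting $A_1$ nearly coincide with a direction already accounted for. I expect the main obstacle to be bookkeeping the limit correctly so that the strict $\varepsilon$-inequality survives while the $\pi/2-\delta$ inequalities pass to $\ge\pi/2$, and then invoking cleanly the rigidity statement that $n+1$ points at pairwise distance $\ge\pi/2$ force dimension $\ge n$. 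Since this lemma is quoted from \cite[2.2]{P1}, I would in fact simply cite Perelman's proof rather than reproduce it, noting only that the compactness-and-rigidity scheme above is its skeleton.
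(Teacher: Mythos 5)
The paper itself states this lemma without proof, citing \cite[2.2]{P1}, so falling back on that citation (as you do in your last sentence) is consistent with the paper; but the sketch you offer as the ``skeleton'' of the argument is not a correct skeleton. The decisive step rests on a false fact: it is not true that $n+1$ points with pairwise distances $\ge\pi/2$ force dimension $\ge n$, nor that $m$ points with pairwise distances $\ge\pi/2$ cannot exist in dimension $<m-1$ (or would span an isometric $\mathbb S^{m-1}$). In $\mathbb S^{n-1}$ the points $\pm e_1,e_2,\dots,e_n$ are $n+1$ points with pairwise distances $\ge\pi/2$, and $\pm e_1,\pm e_2,e_3,\dots,e_n$ are even $n+2$ such points. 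The same example shows your diagnosis of the lemma is wrong: the hypothesis $|A_1A_i|>\pi/2+\varepsilon$ for $i\ge3$ is not a mere robustness device --- without it the statement is false already for exact distances $\ge\pi/2$ in the round sphere, so no argument that reduces to ``$n+1$ almost-orthogonal sets cannot fit'' can work. Your first route, discarding $\xi_2$, also cannot work (as you half-noticed): $v_i=e_{i-2}$ for $i=3,\dots,n+2$ together with $v_1=-(e_1+\dots+e_n)/\sqrt n$ satisfies every condition you retain there. Any proof must use $A_1$ and $A_2$ simultaneously; in the spherical model the contradiction comes, for instance, from expanding $v_2$ in the obtuse basis $v_3,\dots,v_{n+2}$ (its Gram matrix is a Stieltjes matrix, so the inverse has nonnegative entries), which forces all coefficients $\le0$ and hence $\langle v_1,v_2\rangle\ge0$, with equality only if $v_2=0$ --- contradicting $\langle v_1,v_2\rangle\le0$ and $|v_2|=1$.

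There is a second gap even granting a correct spherical argument: your compactness reduction does not land on the round sphere. The Gromov--Hausdorff limit of the $\Sigma_j$ is an arbitrary Alexandrov space of curvature $\ge1$ and dimension $\le n-1$ (collapse may occur), and no rigidity upgrades the limit configuration to one in $\mathbb S^{n-1}$; the claimed rigidity was exactly the false statement above. So one still has to prove the exact-distance statement for all such limit spaces, for which a workable route is induction on dimension: at a point of $A_2$ (the set exempt from the $\varepsilon$-condition) spherical comparison turns the configuration into $n+1$ directions with pairwise distances $\ge\pi/2$, one of which is at distance $\ge\pi/2+c(\varepsilon)$ from all the others, inside the space of directions, which has curvature $\ge1$ and one lower dimension; iterating (always projecting from a non-distinguished point) reduces the count and the dimension by one at each step until three points must sit in a zero-dimensional space, which is impossible. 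A quantitative version of this scheme, keeping track of $\delta\ll\varepsilon$ instead of passing to a limit, is the natural way to prove the lemma as stated; if you prefer not to reproduce it, cite \cite[2.2]{P1} as the paper does, but do not present the proposed sketch as its content.
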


\begin{lem}[{\cite[2.3, 2.4]{P1}}]\label{lem:dir}
\begin{enumerate}
\item Let $\{A_i\}_{i=1}^{k+2}$ be $k+2$ compact subsets ($0\le k\le n-1$) of $\Sigma$ such that $|A_iA_j|>\pi/2-\delta$ for any $i\neq j$ and $|A_1A_i|>\pi/2+\varepsilon$ for any $i\ge 2$.
Then, there is a point $\xi\in \Sigma$ such that $|\xi A_i|=\pi/2$ for any $i\ge 3$ and $|\xi A_1|>\pi/2+c(\varepsilon)$, $|\xi A_2|<\pi/2-c(\varepsilon)$.
\item (1) holds true if we replace the assumption $|A_1A_2|>\pi/2+\varepsilon$ by $|A_1A_2|>\pi/2-\delta$ and the conclusion $|\xi A_1|>\pi/2+c(\varepsilon)$ by $|\xi A_1|>\pi/2-\varkappa(\delta)$.
\item Under the assumptions of (1), there is a point $\xi\in\Sigma$ such that $|\xi A_i|=\pi/2$ for any $i\ge3$ and $|\xi A_2|>\pi/2+c(\varepsilon)$.
\end{enumerate}
\end{lem}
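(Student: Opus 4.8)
The plan is to prove all three parts simultaneously by induction on $k$, the bound $k\le n-1$ together with Lemma~\ref{lem:cnt} being exactly what keeps the recursion from running out of room. \emph{Base case $k=0$:} there are only two subsets $A_1,A_2$ and the conditions $|\xi A_i|=\pi/2$ for $i\ge3$ are vacuous. For (1) and (2) take $\xi$ to be any point of $A_2$: then $|\xi A_2|=0<\pi/2-c(\varepsilon)$, while $|\xi A_1|\ge|A_1A_2|$, which exceeds $\pi/2+\varepsilon>\pi/2+c(\varepsilon)$ under the hypothesis of (1) and $\pi/2-\delta>\pi/2-\varkappa(\delta)$ under that of (2). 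For (3) take $\xi\in A_1$, so that $|\xi A_2|\ge|A_1A_2|>\pi/2+\varepsilon>\pi/2+c(\varepsilon)$.

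\emph{Inductive step.} Assuming the statements for $k-1$, first apply the inductive hypothesis to $A_1,\dots,A_{k+1}$ to obtain a point $\eta$ with $|\eta A_i|=\pi/2$ for $3\le i\le k+1$, $|\eta A_1|>\pi/2+c(\varepsilon)$ and $|\eta A_2|<\pi/2-c(\varepsilon)$ (and the evident analogues in cases (2) and (3)); thus every target condition except $|\cdot A_{k+2}|=\pi/2$ already holds, with slack. It remains to deform $\eta$ until $|\cdot A_{k+2}|$ reaches $\pi/2$ without exhausting that slack. I would move $\eta$ along shortest paths in a direction $v\in\Sigma_\eta$ that is almost orthogonal to each $(A_i)'_\eta$ with $3\le i\le k+1$, makes an angle $\ge\pi/2$ with $(A_1)'_\eta$ and $\le\pi/2$ with $(A_2)'_\eta$, and has inner product bounded away from $0$ with $(A_{k+2})'_\eta$ of whichever sign drives $|\cdot A_{k+2}|$ toward $\pi/2$; by the first variation formula these properties respectively preserve the exact constraints to first order, keep $|\cdot A_1|$ from decreasing and $|\cdot A_2|$ from increasing, and move $|\cdot A_{k+2}|$ monotonically at rate $\ge c(\varepsilon)$. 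Since $\Sigma_\eta$ has curvature $\ge1$ and dimension $\le n-2$, and since hinge comparison turns $|A_iA_j|>\pi/2-\delta$, combined with the near-$\pi/2$ sidelengths at $\eta$, into $|(A_i)'_\eta(A_j)'_\eta|>\pi/2-\varkappa(\delta)$ while preserving the strict $\varepsilon$-gaps, such a $v$ exists by a lower-dimensional case of (a variant of) the present lemma. Flowing along $v$ for a bounded time and stopping when $|\cdot A_{k+2}|=\pi/2$ yields the desired $\xi$; cases (2) and (3) differ only in which inter-subset gap has size $\varepsilon$ rather than $\delta$, hence in whether the corresponding output bound reads $\pi/2\pm c(\varepsilon)$ or $\pi/2-\varkappa(\delta)$.

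The main obstacle is the quantitative bookkeeping of this deformation. One must check that the common level set $\{\,|\cdot A_i|=\pi/2: 3\le i\le k+1\,\}$ stays nondegenerate along the path — which is precisely where the dimension count enters — that the $\delta$-sized errors in the ``almost orthogonal'' estimates do not accumulate past $\varkappa(\delta)$ over the bounded deformation time, and that the monotone quantities reach the targets $\pi/2\pm c(\varepsilon)$ before any constraint is violated; all of this relies on the standing convention that $\delta$ is small relative to $\varepsilon$. One must also arrange the reduction so that passing to $\Sigma_\eta$ does not inflate the effective number of mutually near-orthogonal subsets beyond what Lemma~\ref{lem:cnt} permits one dimension lower, which is most delicate in the top case $k=n-1$. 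Finally the induction should be set up so that the auxiliary direction $v$, obtained from the same lemma in lower dimension, does not produce a vicious circle — harmless since $\dim\Sigma_\eta$ strictly decreases and terminates at a trivial case, but it has to be made explicit.
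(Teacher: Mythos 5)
Note first that the paper contains no proof of this lemma to compare against: it is quoted from Perelman \cite{P1} (Lemmas 2.3, 2.4) and explicitly listed in \S\ref{sec:lem} without proof, so your argument has to stand on its own — and as written it does not. Your base case is fine, and the general strategy (induct on the number of sets, then perturb to impose the last condition $|\cdot A_{k+2}|=\pi/2$ without spending the $c(\varepsilon)$-slack) is in the right spirit, but the perturbation step is exactly the hard content and your treatment of it is not a proof. In an Alexandrov space one cannot ``flow'' along a chosen direction field: directions of shortest paths do not vary continuously, and the first variation formula gives only an infinitesimal, one-sided estimate with a non-uniform $o(t)$ error, so preserving the $k-1$ exact equalities $|\cdot A_i|=\pi/2$ ($3\le i\le k+1$) along a deformation of definite length is precisely what must be established. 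Worse, you have no a priori control on $|\eta A_{k+2}|$: the point $\eta$ produced by the inductive hypothesis may lie very close to $A_{k+2}$ or nearly antipodal to it, so the required correction is in general of macroscopic size, not $O(\delta)$; over a path of definite length the second-order comparison terms are of order one and can destroy the $c(\varepsilon)$-slack in $|\cdot A_1|$, $|\cdot A_2|$, and your hinge-comparison claim $|(A_{k+2})'_\eta(A_j)'_\eta|>\pi/2-\varkappa(\delta)$ is simply unavailable when $|\eta A_{k+2}|$ is far from $\pi/2$. The tool that actually lets one hit the exact level $(\pi/2,\dots,\pi/2)$ in the coordinates $|A_i\cdot|$, $i\ge3$, while moving a controlled distance (so that $|\cdot A_1|$ and $|\cdot A_2|$ change only by a controlled amount) is the quantitative openness/co-Lipschitz property of maps built from distance functions, i.e.\ the machinery of \cite[\S2.1]{P1} (compare Lemma \ref{lem:open} and the way openness is exploited in Propositions \ref{prop:conn} and \ref{prop:per1}); your proposal never sets this up, and without it the ``stop when $|\cdot A_{k+2}|=\pi/2$'' step is unjustified.

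The second genuine gap is the existence of the auxiliary direction $v\in\Sigma_\eta$. The statement you need — a direction nearly orthogonal to $(A_i)'_\eta$ for $3\le i\le k+1$, on prescribed sides of $(A_1)'_\eta$ and $(A_2)'_\eta$, and quantitatively non-orthogonal to $(A_{k+2})'_\eta$ with the correct sign — constrains $k+2$ sets, one more than your inductive hypothesis controls, and your induction is on the number of sets, not on dimension, so ``a lower-dimensional case of (a variant of) the present lemma'' is not covered by anything you have assumed; as stated it is a new, unproved assertion, and the threat of circularity you mention is not actually dispelled. In the extreme case $k=n-1$ the appeal fails on the dimension count alone: $\Sigma_\eta$ has dimension $\le n-2$ while you would be asking it to accommodate $n+1$ constrained sets, beyond what the analogue of Lemma \ref{lem:cnt} one dimension down permits. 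You flag both difficulties yourself, but flagging them is not resolving them, and together with the deformation issue above they constitute the substance of the lemma rather than ``quantitative bookkeeping.''
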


\begin{lem}[{\cite[2.5.2]{P1}}]\label{lem:vol}
Let $A$ be a subset of $\Sigma$.
Let $A(\pi/2\pm a)$ denote the set of points $x\in\Sigma$ such that $||Ax|-\pi/2|\le a$.
Then, the number of $\omega$-discrete points contained in $A(\pi/2\pm a)$ is at most $Ca/\omega^{n-1}$ for any $0<\omega\le a$.
\end{lem}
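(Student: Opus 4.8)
The plan is to deduce this from a Bishop--Gromov-type estimate for the volume of the band $A(\pi/2\pm a)$, combined with a routine packing argument. We may assume that $A$ is closed and nonempty (replace it by its closure; $\dist_A$ is unchanged), that $a<\pi/4$, and that $\varepsilon\le a$: if $a\ge\pi/4$ the bound is immediate, since $\Sigma$, being $(n-1)$-dimensional with curvature $\ge1$, already contains at most $C/\omega^{n-1}$ $\omega$-discrete points; and when $\varepsilon\le a$ we have $A(\pi/2\pm\varepsilon)\subseteq A(\pi/2\pm a)$, so it suffices to bound the number of $\omega$-discrete points in $A(\pi/2\pm a)$.

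First I would run the packing step. If $\{x_i\}$ is $\omega$-discrete in $A(\pi/2\pm a)$, then the balls $B(x_i,\omega/2)$ are pairwise disjoint and contained in $A(\pi/2\pm 2a)$ (here $\omega\le a$ is used). By the Bishop--Gromov inequality in $\Sigma$ (cf.\ \cite[\S10]{BGP}), for each $i$,
\[
\vol B(x_i,\omega/2)\ \ge\ \frac{\int_0^{\omega/2}\sin^{n-2}t\,dt}{\int_0^{\pi}\sin^{n-2}t\,dt}\,\vol\Sigma\ \ge\ c\,\omega^{n-1}\,\vol\Sigma ,
\]
so the number of the $x_i$ is at most $\vol\bigl(A(\pi/2\pm 2a)\bigr)\big/\bigl(c\,\omega^{n-1}\vol\Sigma\bigr)$. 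Thus the lemma reduces to the estimate $\vol\bigl(A(\pi/2\pm a)\bigr)\le C\,a\,\vol\Sigma$ (replacing $a$ by $2a$ only changes the constant $C$).

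For this estimate I would invoke the Bishop--Gromov monotonicity for the distance function from the set $A$: in an $m$-dimensional Alexandrov space with curvature $\ge1$, the ratio $\bigl(\vol B(A,r)-\vol A\bigr)\big/\int_0^r\sin^{m-1}t\,dt$ is non-increasing in $r>0$. Taking $m=n-1$ and comparing the values at $r=\pi/2-a$ and $r=\pi/2+a$ gives
\[
\vol\bigl(A(\pi/2\pm a)\bigr)\ \le\ \bigl(\vol B(A,\pi/2-a)-\vol A\bigr)\cdot\frac{\int_{\pi/2-a}^{\pi/2+a}\sin^{n-2}t\,dt}{\int_0^{\pi/2-a}\sin^{n-2}t\,dt},
\]
and here the first factor is at most $\vol\Sigma$, the numerator of the fraction is at most $2a$, and its denominator is at least $\int_0^{\pi/4}\sin^{n-2}t\,dt=c(n)>0$; this yields the desired bound.

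The only substantial ingredient is the Bishop--Gromov monotonicity for $\dist_A$ --- this is where the hypothesis $\operatorname{curv}\ge1$ enters, and nothing beyond it (and the ordinary Bishop--Gromov for metric balls) is needed. In the Riemannian setting this is the classical tube (Heintze--Karcher) comparison; in the Alexandrov setting it follows, for instance, from the coarea formula together with the fact that $-\cos\dist_A$ is $1$-concave (a direct consequence of the triangle comparison, being a minimum of the functions $-\cos\dist_{\{y\}}$) and the comparison for its gradient flow, or one can simply quote it from the Alexandrov-geometry literature, as it is essentially contained in \cite[\S2]{P1}. I expect establishing this monotonicity rigorously to be the main obstacle; the packing step and the elementary estimates above are routine.
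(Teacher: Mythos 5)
The paper itself gives no proof of Lemma \ref{lem:vol}: it is quoted from \cite[2.5.2]{P1}, and \S\ref{sec:lem} explicitly lists it ``without proofs,'' so there is no in-paper argument to compare yours with and it must stand on its own. Your outer layer is fine. The packing step (disjointness of the balls $B(x_i,\omega/2)$, their inclusion in $A(\pi/2\pm2a)$ since $\dist_A$ is $1$-Lipschitz and $\omega\le a$, and the lower bound $\vol B(x_i,\omega/2)\ge c\,\omega^{n-1}\vol\Sigma$ from the relative Bishop--Gromov inequality with $B(x_i,\pi)=\Sigma$) is correct, and it correctly reduces the lemma to the band estimate $\vol\bigl(A(\pi/2\pm a)\bigr)\le Ca\,\vol\Sigma$; your reading of the ``$\varepsilon$'' in the statement as the band parameter is also the only sensible one. (A small quantitative slip: since you apply the band estimate with $2a$, you need $\pi/2-2a$ bounded away from $0$, so the trivial-case threshold should be, say, $a\ge\pi/8$ rather than $a\ge\pi/4$; this is harmless.)

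The genuine gap is the central ingredient: the asserted monotonicity of $r\mapsto\bigl(\vol B(A,r)-\vol A\bigr)/\int_0^r\sin^{n-2}t\,dt$ for an \emph{arbitrary} closed subset $A$ of a possibly singular Alexandrov space of curvature $\ge1$. This Heintze--Karcher/tube-type generalization of Bishop--Gromov to $\dist_A$ is precisely the nontrivial content of the lemma, and you neither prove it nor give a usable reference. Citing ``\cite[\S2]{P1}'' for it is circular in this context, since the statement under proof is \cite[2.5.2]{P1} and what you would be quoting is its companion estimate in the same section. The sketched derivation also does not obviously close: the gradient flow of $-\cos\dist_A$ pushes points away from $A$, so it need not map the inner sublevel set onto the outer band, and it cannot be run backwards; the function one would want to flow along inward, $\cos\dist_A$, is a maximum (not a minimum) of semiconcave functions and is not semiconcave, so the Petrunin flow machinery does not apply to it; and a Lipschitz bound on flow maps does not by itself yield a $\sin^{n-2}$-ratio volume comparison between level sets. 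In the smooth Riemannian case the monotonicity is indeed the classical tube comparison (Riccati comparison shows equidistants from any set have mean curvature at most $(n-2)\cot t$), which is why the statement is believable, but transferring it to singular Alexandrov spaces is essentially the substance of Perelman's lemma and needs an actual argument, not a placeholder citation. As it stands, your proposal is a correct packing reduction resting on an unproven key inequality.
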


\section{Construction of a global map}\label{sec:con}

In this section, we construct the global map $f$ of Theorem \ref{thm:subm} and prove the inequality \eqref{eq:main}.

\begin{thm}[cf.\ {\cite[0.2]{Y2}}]\label{thm:con}
Let $X$ be a $k$-dimensional Alexandrov space such that every point has a $(k,\delta)$-strainer with length $>\ell$.
Let $M$ be an $n$-dimensional Alexandrov space and $g:M\to X$ a $\mu$-approximation.
Suppose $\mu<c(\ell\delta^2)\ll\ell\delta^2$.
Then, there exists a map $f:M\to X$, which is $C\mu$-close to $g$, satisfying the following property:

Let $\{(a_i,b_i)\}_{i=1}^k$ be a $(k,\delta)$-strainer at $p\in X$ such that $|a_ip|=\ell\delta$ for all $i$.
Let $\varphi:X\to\mathbb R^k$ be its distance coordinate and $\hat\varphi:M\to\mathbb R^k$ a lift of $\varphi$.
Then, we have
\begin{equation}\label{eq:con}
\bigl|(\varphi\circ f(x)-\varphi\circ f(y))-(\hat\varphi(x)-\hat\varphi(y))\bigr|<\varkappa(\delta)|xy|
\end{equation}
for any $x,y\in B(\hat p,\ell\delta^2)$, where $\hat p\in M$ denotes a lift of $p$ (i.e.\ $|pg(\hat p)|<\mu$).
\end{thm}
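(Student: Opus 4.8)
The plan is to glue the local maps $\varphi^{-1}\circ\hat\varphi$ by a partition of unity and a center-of-mass operation, essentially as in \cite{Y2}, and then to extract \eqref{eq:con} from the mutual compatibility, up to errors of order $\varkappa(\delta)|xy|$, of the lifted distance coordinates attached to different strainers. Concretely, fix a net $\{p_\alpha\}$ in $X$ at scale $\sigma=c\ell\delta$, so that $\{B(p_\alpha,\sigma)\}$ has multiplicity $\le C$, and at each $p_\alpha$ choose a $(k,\delta)$-strainer of length $>\ell$ with distance coordinate $\varphi_\alpha:X\to\mathbb R^k$; by Proposition \ref{prop:str}(1), $\varphi_\alpha$ is a $\varkappa(\delta)$-almost isometry on $B(p_\alpha,\ell\delta)$, and since $\dim X=k$ and $p_\alpha$ is strained it is in fact $\varkappa(\delta)$-close to affine on a ball of radius $\sim\ell\delta^2$. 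Choosing lifts $\hat a_i^\alpha\in M$ of the strainer points through $g$, set $\hat\varphi_\alpha=(|\hat a_1^\alpha\,\cdot\,|,\dots,|\hat a_k^\alpha\,\cdot\,|)$ and $f_\alpha:=\varphi_\alpha^{-1}\circ\hat\varphi_\alpha$. As $g$ is a $\mu$-approximation, $|\hat\varphi_\alpha-\varphi_\alpha\circ g|<C\mu$ on $g^{-1}(B(p_\alpha,\sigma))$, so $f_\alpha$ is $C\mu$-close to $g$ there and any two $f_\alpha$ are mutually $C\mu$-close on the overlap of their domains. Pulling a Lipschitz partition of unity $\{\rho_\alpha\}$ subordinate to $\{B(p_\alpha,\sigma)\}$ back through the $f_\alpha$ and renormalizing yields a Lipschitz partition of unity $\{\hat\rho_\alpha\}$ on $M$ with $|\nabla\hat\rho_\alpha|\le C/\sigma$, and I define $f(x)$ to be the $\{\hat\rho_\alpha(x)\}$-weighted center of mass of $\{f_\alpha(x)\}$, unambiguous since these points lie in a ball of radius $C\mu\ll\ell\delta$. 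Then $f$ is $C\mu$-close to $g$.

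The heart of the matter is the following compatibility estimate: for the test strainer at $p$ with distance coordinate $\varphi$ and lift $\hat\varphi$ as in the statement, and for every $\alpha$ with $p_\alpha$ close to $p$,
\[\bigl|(\varphi\circ f_\alpha(x)-\varphi\circ f_\alpha(y))-(\hat\varphi(x)-\hat\varphi(y))\bigr|<\varkappa(\delta)|xy|,\qquad x,y\in B(\hat p,\ell\delta^2).\]
To prove it I would argue as follows. All strainers in play have length $\gg\ell\delta^2$, hence remain $(k,\varkappa(\delta))$-strainers at every point of the relevant balls, and since $\mu/(\ell\delta)=\varkappa(\delta)$ the lifted strainers are $(k,\varkappa(\delta))$-strainers in $M$ as well. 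By Lemma \ref{lem:susp}, at each relevant $z\in M$ the space of directions $\Sigma_z$ is $\varkappa(\delta)$-close to a spherical suspension $S^k(\Sigma)$, and the directions of the arms both of $\hat\varphi_\alpha$ and of $\hat\varphi$ lie $\varkappa(\delta)$-close to the distinguished $\mathbb S^{k-1}\subset S^k(\Sigma)$: for $\hat\varphi_\alpha$'s arms this is the orthogonal $k$-frame furnished by the lemma, and for $\hat\varphi$'s arms it follows because their comparison angles with $\hat\varphi_\alpha$'s arms---computed in $M$, hence $\varkappa(\delta)$-close to the corresponding ones in $X$, where the entire space of directions is already $\varkappa(\delta)$-close to $\mathbb S^{k-1}$---force $\sum_i\cos^2\angle(\,\cdot\,,(\hat a_i^\alpha)'_z)\approx1$. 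By the first variation formula the directional derivatives of $\hat\varphi$ and $\hat\varphi_\alpha$ at $z$ therefore determine each other, up to $\varkappa(\delta)$, through the (linearized) map $\varphi\circ\varphi_\alpha^{-1}$ that relates $\varphi$ and $\varphi_\alpha$ on $X$; and $\varphi\circ\varphi_\alpha^{-1}:\mathbb R^k\to\mathbb R^k$, being a composition of distance coordinates whose differentials vary by only $\varkappa(\delta)$ over a ball of radius $\sim\ell\delta^2$, is itself $\varkappa(\delta)$-close to a single affine map there. Integrating these infinitesimal comparisons along a shortest path in $M$ from $x$ to $y$ gives the displayed inequality.

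Finally I would deduce \eqref{eq:con} for $f$. On a ball of radius $\sim\ell\delta^2$ containing $f(x)$, $f(y)$ and all the relevant $f_\alpha(x)$, $f_\alpha(y)$, the coordinate $\varphi$ is $\varkappa(\delta)$-close to affine and so almost commutes with the center of mass; hence
\[\varphi\circ f(x)-\varphi\circ f(y)=\sum_\alpha\bigl(\hat\rho_\alpha(x)\,\varphi\circ f_\alpha(x)-\hat\rho_\alpha(y)\,\varphi\circ f_\alpha(y)\bigr)+E,\]
with $E$ of order $\varkappa(\delta)|xy|$. Splitting the sum as $\sum_\alpha\hat\rho_\alpha(x)\bigl(\varphi\circ f_\alpha(x)-\varphi\circ f_\alpha(y)\bigr)+\sum_\alpha\bigl(\hat\rho_\alpha(x)-\hat\rho_\alpha(y)\bigr)\varphi\circ f_\alpha(y)$, the first term equals $\hat\varphi(x)-\hat\varphi(y)$ up to $\varkappa(\delta)|xy|$ by the compatibility estimate and $\sum_\alpha\hat\rho_\alpha\equiv1$; the second, after subtracting a fixed $\varphi\circ f_{\alpha_0}(y)$ from each factor---permissible because $\sum_\alpha(\hat\rho_\alpha(x)-\hat\rho_\alpha(y))=0$, so that each bracket becomes $<C\mu$---is bounded by $C\cdot(C|xy|/\sigma)\cdot C\mu$, i.e.\ $\varkappa(\delta)|xy|$ since $\mu/\sigma=\varkappa(\delta)$. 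This is \eqref{eq:con}.

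The main obstacle is the compatibility estimate, and inside it the passage from the infinitesimal suspension picture to the integrated inequality: one must control the almost-everywhere derivative of $\varphi\circ f_\alpha$ along geodesics of $M$---which is exactly why it is essential that $\varphi\circ\varphi_\alpha^{-1}$ be close to an honest affine map, not just an almost-isometry of Euclidean domains---while bearing in mind that $f_\alpha$ does not carry geodesics to geodesics, so the estimates for $\hat\varphi$ and for $\varphi\circ f_\alpha$ have to be run separately along the path and only then compared. Some further care goes into the bookkeeping of the center-of-mass gluing (well-definedness, choice of chart) and into verifying that the error term $E$ above genuinely scales like $\varkappa(\delta)|xy|$ rather than like a fixed multiple of $\mu$.
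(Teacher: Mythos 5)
Your overall strategy is the right one, and your ``compatibility estimate'' is essentially the paper's key step: the paper also reduces \eqref{eq:con} to the special case of the chart strainer at the nearby net point and then transfers it to an arbitrary test strainer by the same suspension/cosine-matrix comparison (Lemmas \ref{lem:str} and \ref{lem:susp}), except that it works with finite comparison angles at scale $\ell\delta$ versus $\ell\delta^2$ rather than by differentiating and integrating along geodesics; your own caveat about $f_\alpha$ not preserving geodesics is resolved exactly by that finite-scale formulation. The genuine gap is in the gluing. First, a ``$\{\hat\rho_\alpha(x)\}$-weighted center of mass of $\{f_\alpha(x)\}$'' has no canonical meaning in an Alexandrov space with only a lower curvature bound: to make it well defined you must either average in some chart $\varphi_{\alpha_0}$ (and then the choice of $\alpha_0$ varies with $x$, so the ambiguity between charts, of size $\varkappa(\delta)\mu$, re-enters as $x$ moves), or minimize a weighted sum of squared distances (no uniqueness or first-order regularity available), or embed $X$ into $L^2(X)$ as Yamaguchi does --- and then the derivative estimate requires precisely the tubular-neighborhood machinery of \cite[4.6, 4.13]{Y2} that you have not supplied.

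Second, and more seriously, your error term $E$ is only controlled pointwise: the failure of $\varphi$ (or of a chart transition) to commute with the averaging over a cluster of diameter $C\mu$ is of size $\varkappa(\delta)\mu$ at $x$ and at $y$ separately, and this does \emph{not} give $|E|<\varkappa(\delta)|xy|$, since \eqref{eq:con} must hold for all $x,y\in B(\hat p,\ell\delta^2)$, including $|xy|\ll\mu$. To rescue the argument you would have to show that the deviation-from-affine error, measured in a chart that is \emph{fixed independently of the test strainer and of $x,y$} (which you cannot take to be $\varphi$ itself, since $f$ must be built before the test strainer is given), varies in a $\varkappa(\delta)$-Lipschitz way in $x$; you flag this issue at the end but do not resolve it, and it is exactly the point of the construction. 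The paper sidesteps it by gluing sequentially, defining $f_j:=\varphi_j^{-1}\circ((1-\chi_j)\,\varphi_j\circ f_{j-1}+\chi_j\hat\varphi_j)$ as an \emph{exact} convex combination in the current chart, so that in that chart there is no commutation error at all: the difference $(\varphi_j\circ f_j(x)-\varphi_j\circ f_j(y))-(\hat\varphi_j(x)-\hat\varphi_j(y))$ splits identically into an induction-hypothesis term and a $(\chi_j(x)-\chi_j(y))$ term, both manifestly $O(\varkappa(\delta)|xy|)$ because $\chi_j$ is $L/r$-Lipschitz and $|\varphi_j\circ f_{j-1}-\hat\varphi_j|<C\mu\ll r$. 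Either adopt such an exact-in-chart convex combination (and run your compatibility step afterwards, as the paper does), or import Yamaguchi's $L^2$-embedding construction; as written, the simultaneous barycenter gluing does not yield \eqref{eq:con} at scales below $\mu$.
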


\begin{rem}\label{rem:con1}
The inequality \eqref{eq:con} immediately implies that $f$ is a $\varkappa(\delta)$-almost isometry when $k=n$ (cf.\ \cite[9.8]{BGP}, \cite[3.1]{S}, \cite{WSS}) and is a $(1-\varkappa(\delta))$-open, $\varkappa(\delta)$-almost Lipschitz submersion when $k<n$.
See Propositions \ref{prop:reg} and \ref{prop:int}.
\end{rem}

\begin{rem}\label{rem:con2}
The inequality \eqref{eq:con} actually holds for the almost Lipschitz submersion constructed by Yamaguchi \cite{Y2} (use \cite[4.6, 4.13]{Y2}).
His construction is based on an embedding of $X$ to the Hilbert space $L^2(X)$ of all $L^2$-functions on $X$ and the existence of a ``tubular neighborhood'' of the image of $X$.
Here we give a more direct proof.
Our construction is a generalization of that of the almost isometry in \cite[3.1]{S} when $k=n$.
However, both constructions are based on the same idea of gluing local distance coordinates.
\end{rem}

\begin{proof}[Proof of Theorem \ref{thm:con}]
We denote by $\hat a\in M$ a lift of $a\in X$ with respect to $g$.
Set $r:=\ell\delta^2$.
Let $\{p_j\}_{j=1,2,\dots}$ be a maximal $r/2$-discrete net of $X$ (possibly infinite).
Take a $(k,\delta)$-strainer $\{(\alpha_i^j,\beta_i^j)\}_{i=1}^k$ with length $>\ell$ for each $p_j$ and let $a_i^j$, $b_i^j$ be points on shortest paths $p_j\alpha_i^j$, $p_j\beta_i^j$ at distance $\ell\delta$ from $p_j$, respectively. 
Set
\begin{align*}
&\varphi_j:=(|a_1^j\cdot|,\dots,|a_k^j\cdot|),&&U_j:=B(p_j,r),\quad\lambda U_j:=B(p_j,\lambda r),\\
&\hat\varphi_j:=(|\hat a_1^j\cdot|,\dots,|\hat a_k^j\cdot|),&&\hat U_j:=B(\hat p_j,r),\quad\lambda\hat U_j:=B(\hat p_j,\lambda r)
\end{align*}
for $\lambda>0$.
Since $\mu\ll r$, $\{\hat U_j\}_j$ is a covering of $M$ and $\{(\hat a_i^j,\hat b_i^j)\}_{i=1}^k$ is a $(k,\varkappa(\delta))$-strainer for $\hat p_j$.
By Proposition \ref{prop:str}, $\varphi_j$ is a $\varkappa(\delta)$-almost isometry from $10U_j$ to an open subset of $\mathbb R^k$ and $\hat\varphi_j$ is a $(1+\varkappa(\delta))$-Lipschitz map on $3\hat U_j$.
Hence, we can define $\varphi_j^{-1}\circ\hat\varphi_j$ on $3\hat U_j$.
Note that $|\varphi_j^{-1}\circ\hat\varphi_j,g|<C\mu$.
We take an average of them to obtain a global map.
Define $f_j:\bigcup_{j'=1}^j\hat U_{j'}\to X$ inductively as follows:
\[f_1:=\varphi_1^{-1}\circ\hat\varphi_1:\hat U_1\to X\]
and for $j\ge2$,
\begin{equation*}
f_j:=
\begin{cases}
\hfil f_{j-1} & \text{on}\hfil\bigcup_{j'=1}^{j-1}\hat U_{j'}\setminus2\hat U_j\\
\varphi_j^{-1}\circ((1-\chi_j)\varphi_j\circ f_{j-1}+\chi_j\hat\varphi_j) & \text{on}\quad\bigcup_{j'=1}^{j-1}\hat U_{j'}\cap(2\hat U_j\setminus\hat U_j)\\
\hfil\varphi_j^{-1}\circ\hat\varphi_j & \text{on}\hfil\hat U_j.
\end{cases}
\end{equation*}
Here $\chi_j:=\chi(|\hat p_j\cdot|/r)$, where $\chi:[0,\infty)\to[0,1]$ is a smooth function such that $\chi\equiv1$ on $[0,1]$ and $\chi\equiv0$ on $[2,\infty]$.
Note that $\chi_j$ is $L/r$-Lipschitz for some constant $L$.

Then, it easily follows that $|f_j,g|<C\mu$ by the induction on $j$ (in particular, the above definition of $f_j$ works).
Note that the number of the induction steps at each point in the domain of $f_j$ is uniformly bounded above.
Indeed, since $\{p_j\}_j$ is $r/2$-discrete and $\mu\ll r$, the multiplicity of the covering $\{2\hat U_j\}_j$ is bounded above by some constant depending only on $k$.
We define $f(x):=f_{N_x}(x)$ for $x\in M$, where $N_x:=\max\{j\mid x\in2\hat U_j\}$.

Now, we show the inequality \eqref{eq:con}.
Let $p$, $\hat p$, $\varphi$, $\hat\varphi$ be as in the assumption.
Set $\lambda\hat U:=B(\hat p,\lambda r)$ as above.
We prove by the induction on $j$ that
\begin{equation}\label{eq:ind}
\bigl|(\varphi\circ f_j(x)-\varphi\circ f_j(y))-(\hat\varphi(x)-\hat\varphi(y))\bigr|<\varkappa(\delta)|xy|
\end{equation}
for $x,y\in 3\hat U\cap\dom(f_j)$, where $\dom(f_j)=\bigcup_{j'=1}^j\hat U_{j'}$.
Note that we may assume $|xy|<r$ since $|f_j,g|<C\mu$ and $\mu\ll r$.

First, we prove the inequality \eqref{eq:ind} for the special case $\varphi=\varphi_j$ and $p=p_j$.
The base case $j=1$ is trivial.
Suppose $j\ge2$.
Let us consider the case $x,y \in(2\hat U_j\setminus\hat U_j)\cap\dom(f_j)$ (the other cases are similar).
Then, we have
\begin{align*}
&(\varphi_j\circ f_j(x)-\varphi_j\circ f_j(y))-(\hat\varphi_j(x)-\hat\varphi_j(y))\\
&=(1-\chi_j(x))(\varphi_j\circ f_{j-1}(x)-\hat\varphi_j(x))-(1-\chi_j(y))(\varphi_j\circ f_{j-1}(y)-\hat\varphi_j(y))\\
&=(1-\chi_j(x))((\varphi_j\circ f_{j-1}(x)-\varphi_j\circ f_{j-1}(y))-(\hat\varphi_j(x)-\hat\varphi_j(y)))\\
&\qquad\qquad\qquad\qquad\qquad\qquad\qquad-(\chi_j(x)-\chi_j(y))(\varphi_j\circ f_{j-1}(y)-\hat\varphi_j(y)).
\end{align*}
The norm of the first term of the last formula is less than $\varkappa(\delta)|xy|$ by the induction hypothesis.
The same is true for the second term since $\chi_j$ is $L/r$-Lipschitz and $|\varphi_j\circ f_{j-1},\hat\varphi_j|<C\mu$, where $\mu\ll r$.

Next, we consider the general case.
Lemma \ref{lem:str} implies that the inequality \eqref{eq:ind} is equivalent to
\begin{equation}\label{eq:ind'}
\left|\cos\angle a_if_j(x)f_j(y)\cdot\frac{|f_j(x)f_j(y)|}{|xy|}-\cos\angle\hat a_ixy\right|<\varkappa(\delta)
\end{equation}
for all $1\le i\le k$.
By the induction hypothesis, we may assume that $x,y\in3\hat U\cap \dom(f_j)\cap3\hat U_j$ (recall $|xy|<r$).
Fix $1\le i \le k$.
First, by the strainer $\{(a_{i'}^j,b_{i'}^j)\}_{i'=1}^k$ at $f_j(x)$ and Lemma \ref{lem:susp}, $\Sigma_{f_j(x)}$ is $\varkappa(\delta)$-close to $\mathbb S^{k-1}$ in the Gromov-Hausdorff distance.
Thus, we have
\[\left|\cos\angle a_if_j(x)f_j(y)-\sum_{i'=1}^k\cos\angle a_if_j(x)a_{i'}^j\cdot\cos\angle a_{i'}^jf_j(x)f_j(y)\right|<\varkappa(\delta).\]
Next, since $|f_j,g|<C\mu$ and $\mu\ll r$, we have $|\tilde\angle a_if_j(x)a_{i'}^j-\tilde\angle\hat a_ix\hat a_{i'}^j|<\varkappa(\delta)$ for all $1\le i'\le k$.
Recall that $a_{i'}^j$, $b_{i'}^j$ are on the shortest paths $p_j\alpha_{i'}^j$, $p_j\beta_{i'}^j$ at distance $\ell\delta$ from $p_j$, where $\{(\alpha_{i'}^j,\beta_{i'}^j)\}_{i'=1}^k$ is a $(k,\delta)$-strainer with length $>\ell$ at $p_j$.
Hence, by using Lemma \ref{lem:str} twice, we have
\[\left|\angle a_if_j(x)a_{i'}^j-\angle\hat a_ix\hat a_{i'}^j\right|<\varkappa(\delta)\]
for all $1\le i'\le k$.
Finally, by the strainer $\{(\hat a_{i'}^j,\hat b_{i'}^j)\}_{i'=1}^k$ at $x$ and Lemma \ref{lem:susp} again, $\Sigma_x$ is $\varkappa(\delta)$-close to a $k$-fold spherical suspension $S^k(\Sigma)$.
Furthermore, this approximation sends $(\hat a_i)'_x$ into the $\varkappa(\delta)$-neighborhood of $\mathbb S^{k-1}$ in $S^k(\Sigma)$ since the previous inequality implies that $\sum_{i'=1}^k\cos^2\angle\hat a_ix\hat a_{i'}^j\approx\sum_{i'=1}^k\cos^2\angle a_if_j(x)a_{i'}^j\approx1$ (roughly speaking, $(\hat a_i)'_x$ is a ``horizontal direction'').
Therefore, we have
\[\left|\cos\angle\hat a_ixy-\sum_{i'=1}^k\cos\angle\hat a_ix\hat a_{i'}^j\cdot\cos\angle\hat a_{i'}^jxy\right|<\varkappa(\delta).\]

Combining the above three inequalities with the inequality \eqref{eq:ind'} for the special case $\varphi=\varphi_j$ and $p=p_j$, we obtain the general one.
\end{proof}

\section{Modification of Perelman's fibration theorem}\label{sec:per}

In this section, we prove Theorem \ref{thm:main} and Corollary \ref{cor:main}.
We generalize the notion of noncritical maps introduced by Perelman \cite{P1} in terms of the inequality \eqref{eq:con} and prove the fibration theorem \cite[1.4.1]{P1} for such generalized noncritical maps.
Although the proof is almost the same as the original one, we give the details because of the difficulty of the original proof.
Hence, most of the contents of this section are just slight modifications of those of \cite[\S3]{P1}.

Here we use another positive number $\varepsilon$ in addition to $\delta$.
Note that $\varkappa$ in this section may depend on $\varepsilon$.
Furthermore, we assume that $\delta$ is much smaller than $\varepsilon$ and every $c(\varepsilon)$ in this section and that so is $\varkappa(\delta)$.
We use the maximum norm on $\mathbb R^k$ unless otherwise stated.
See \S\ref{sec:note} for the notation and conventions.

The following definition is the key to generalizing the fibration theorem.
For a point $p$ and subsets $A$, $B$ in an Alexandrov space, $\tilde\angle ApB$ denotes the comparison angle at $p$ of the comparison triangle with sidelengths $|pA|$, $|pB|$ and $|AB|$ if it exists; otherwise $\tilde\angle:=0$.

\begin{dfn}[cf.\ {\cite[3.1]{P1}}]\label{dfn:nc}
Let $U$ be an open subset of an Alexandrov space $M$.
A map $f=(f_1,\dots,f_k):U\to\mathbb R^k$ is said to be \textit{$(\varepsilon,\delta)$-noncritical (in the generalized sense)} at $p\in U$ if it satisfies the following conditions:
\begin{enumerate}
\item Each $f_i$ satisfies the following inductive condition:
there exists a function $g_i:U\to\mathbb R$ such that
\begin{equation}\label{eq:nc}
\bigl|(f_i(x)-f_i(y))-(g_i(x)-g_i(y))\bigr|<\delta|xy|
\end{equation}
for any $x,y\in U$, and
\[g_i=\inf_\gamma g_{i\gamma},\quad g_{i\gamma}=\varphi_{i\gamma}(|A_{i\gamma}\cdot|)+\sum_{l=1}^{i-1}\varphi_{i\gamma}^l(f_l(\cdot))+c_{i\gamma},\]
where $c_{i\gamma}\in\mathbb R$, $A_{i\gamma}$ are compact subsets of $M$, $\varphi_{i\gamma}$ and $\varphi_{i\gamma}^l$ have right and left derivatives, $\varphi_{i\gamma}^l$ are $\varepsilon^{-1}$-Lipschitz functions, and $\varphi_{i\gamma}$ are increasing functions such that $\varphi_{i\gamma}(0)=0$ and $\varepsilon|x-y|\le|\varphi_{i\gamma}(x)-\varphi_{i\gamma}(y)|\le\varepsilon^{-1}|x-y|$.
\item The sets of indices $\Gamma_i(p):=\{\gamma\mid g_i(p)=g_{i\gamma}(p)\}$ satisfy $\#\Gamma_i(p)<\varepsilon^{-1}$.
Furthermore, there exists $\rho=\rho(p)>0$ such that for all $i$
\[g_i(x)<g_{i\gamma}(x)-\rho\]
for $x\in B(p,\rho)$ and $\gamma\notin\Gamma_i(p)$ (we assume $B(p,\rho)\subset U$).
\item $\tilde\angle A_{i\alpha}pA_{j\beta}>\pi/2-\delta$ for all $i\neq j$, $\alpha\in\Gamma_i(p)$, $\beta\in\Gamma_j(p)$.
\item There exists a point $w=w(p)\in M$ such that $\tilde\angle A_{i\gamma}pw>\pi/2+\varepsilon$ for all $i$ and $\gamma\in\Gamma_i(p)$.
\end{enumerate}
\end{dfn}

We have added the inequality \eqref{eq:nc} to the original definition.
In particular, each $f_i$ is not necessarily defined by distance functions unlike $g_i$ and does not always have directional derivatives.
However, the inequality \eqref{eq:nc} guarantee that their difference quotients are almost equal.
In case $f_i\equiv g_i$, our definition coincides with the original one.
Note that each $g_i$ is defined by $f_l$ ($1\le l\le i-1$) but not by $g_l$ (this is necessary to prove Proposition \ref{prop:per1} below).

The purpose of this section is to prove the following generalized fibration theorem:

\begin{thm}[cf.\ {\cite[1.4.1]{P1}}]\label{thm:per}
Let $U$ be a domain of an $n$-dimensional Alexandrov space $M$ such that $\vol_{n-1}\Sigma_p>\varepsilon$ for any $p\in U$.
If a map $f:U\to\mathbb R^k$ is proper and $(\varepsilon,\delta)$-noncritical in the generalized sense for some $\delta\ll\varepsilon$, then it is a locally trivial fibration.
\end{thm}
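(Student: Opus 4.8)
\medskip
\noindent\emph{Proof plan for Theorem \ref{thm:per}.}
The plan is to follow Perelman's argument in \cite[\S3]{P1} step by step, replacing every appeal to the first variation formula for the distance-type functions $g_i$ by the corresponding statement for $f_i$, which the inequality \eqref{eq:nc} supplies up to an error of $\delta$ times the relevant length. Since $\delta\ll\varepsilon$, this error is always absorbed by the $\varepsilon$-sized margins in conditions (3) and (4) of Definition \ref{dfn:nc}, so all of Perelman's quantitative estimates survive with slightly worse constants. First I would check that $(\varepsilon,\delta)$-noncriticality at $p$, together with condition (2), persists on a whole neighborhood of $p$ with the same data $A_{i\gamma}$ and $w$ and comparable constants (for a nearby point $p'$ one has $\Gamma_i(p')\subseteq\Gamma_i(p)$, so (3) and (4) are inherited and (2) only becomes easier); hence $f$ is $(\varepsilon,\delta)$-noncritical on an open set. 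By a routine localization using properness, it then suffices to show: for every $q_0\in f(U)$ there is $r>0$ and a homeomorphism $f^{-1}(B(q_0,r))\to B(q_0,r)\times f^{-1}(q_0)$ commuting with $f$. Since $f$ is proper, $f^{-1}(q_0)$ is compact and can be covered by finitely many neighborhoods carrying the noncriticality data, and shrinking $r$ keeps $f^{-1}(B(q_0,r))$ inside their union.

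Next I would work out, at each point $x$ of this region, which directions $\xi\in\Sigma_x$ are \emph{admissible}, meaning that a short curve issuing from $x$ in the direction $\xi$ changes $f$ at a prescribed, invertible rate. By the first variation formula the one-sided derivative of $|A_{i\gamma}\cdot|$ along $\xi$ is $-\cos|\xi\,(A_{i\gamma})'_x|$; the structure $g_i=\inf_\gamma\bigl(\varphi_{i\gamma}(|A_{i\gamma}\cdot|)+\sum_{l<i}\varphi_{i\gamma}^l(f_l)+c_{i\gamma}\bigr)$ together with \eqref{eq:nc} then controls the increment of $f_i$ along such a curve, up to a $\delta$-error times its length, in terms of these derivatives and those of $f_1,\dots,f_{i-1}$ --- a triangular dependence that lets one treat the coordinates in order. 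Conditions (3) and (4), combined with the packing lemmas of \S\ref{sec:lem} (Lemmas \ref{lem:cnt}, \ref{lem:dir}, \ref{lem:vol}) and the hypothesis $\vol_{n-1}\Sigma_x>\varepsilon$, then guarantee, as in the corresponding place in \cite[\S3]{P1} and in the analogue of Proposition \ref{prop:per1}, that the set of admissible directions is nonempty and robust enough that any target infinitesimal change of $f(x)$ can be realized by a suitable $\xi(x)$; this is where the volume lower bound is indispensable.

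Finally I would assemble these admissible directions into a continuous ``gradient-like'' flow on $f^{-1}(B(q_0,r))$ by patching the local choices via a partition of unity subordinate to the finite cover of $f^{-1}(q_0)$, checking that a convex combination of admissible local moves is still admissible (with $\delta\ll\varepsilon$ to spare), so that along the flow the curve $t\mapsto f(\Phi_t(x))$ moves with controlled, invertible velocity. Reparametrizing it to trace any prescribed path in $\mathbb R^k$, one flows each point with $f$-value near $q_0$ back onto $f^{-1}(q_0)$, landing at some $z\in f^{-1}(q_0)$, and defines the trivialization by sending $x$ to the pair $(f(x),z)$; continuity and bijectivity, with inverse given by flowing forward, follow exactly as in \cite{P1}, and patching the resulting local trivializations over a neighborhood of $q_0$ finishes the proof.

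The step I expect to be the main obstacle --- the same one that makes Perelman's original proof delicate --- is showing that the patched vector field is still admissible: $\Sigma_x$ is not a linear space, so one cannot literally average directions and must instead run the comparison and packing estimates, exploiting that every relevant inequality has $\varepsilon$-sized slack. The genuinely new point is only to track the $\delta$-errors coming from \eqref{eq:nc} uniformly through the patching and the reparametrization; because these errors are always dominated by the $\varepsilon$-margins they cause no essential trouble, but keeping them under control is why the estimates of \cite[\S3]{P1} have to be rewritten carefully rather than merely cited.
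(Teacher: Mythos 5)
Your first two paragraphs are consistent with what actually has to be done: the only place the new hypothesis \eqref{eq:nc} enters is in transferring first-variation estimates from the distance-type functions $g_i$ to the coordinates $f_i$ (with the triangular dependence on $f_1,\dots,f_{i-1}$), which yields openness and the local Lipschitz bound, and the packing lemmas of \S\ref{sec:lem} together with $\vol_{n-1}\Sigma_p>\varepsilon$ drive the rest. This is exactly how the paper proceeds (Proposition \ref{prop:open} and Remark \ref{rem:open}); after that the inequality \eqref{eq:nc} is never needed again.

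The genuine gap is your final assembly step. You propose to patch the admissible directions into a continuous gradient-like flow by a partition of unity and then trivialize $f$ by flowing back onto a fiber. This is precisely the step that is not available in Alexandrov geometry, and it is the reason Perelman's proof is structured the way it is: the spaces of directions $\Sigma_x$ carry no linear structure and do not vary continuously in $x$, there is no exponential map or uniqueness of shortest paths with which to integrate a chosen direction field, and no selection theorem produces a continuous family $\xi(x)$ of admissible directions from the pointwise existence statements that Lemmas \ref{lem:dir} and \ref{lem:vol} give you. Treating this as a technical wrinkle to be absorbed by the $\varepsilon$-slack is not enough; no amount of slack turns a pointwise choice of directions into a flow commuting with $f$. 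The actual mechanism replacing the flow is the one your plan omits entirely: if $f$ is incomplementable at $p$ one constructs the auxiliary function $h$ (Propositions \ref{prop:per1} and \ref{prop:per2}), so that $(f,h)$ is again noncritical away from a controlled set, and one runs a reverse induction on the number of coordinates up to $k=n$ (where noncritical maps are bi-Lipschitz homeomorphisms); the local trivializations are then produced and glued by the purely topological part of \cite[\S1]{P1}, which uses the MCS-structure of the fibers and deformation/gluing theorems for such stratified spaces, not an isotopy generated by a vector field. The paper's proof of Theorem \ref{thm:per} consists exactly in re-verifying the geometric propositions of \cite[\S3]{P1} for Definition \ref{dfn:nc} (where \eqref{eq:nc} is used once, in the openness argument) and then quoting that topological part verbatim, since it depends only on the properties listed in \cite[1.3]{P1}. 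Without some substitute for Propositions \ref{prop:per1}--\ref{prop:per2} and the induction they feed, your outline does not reach a trivialization.
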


\begin{rem}\label{rem:per}
The term ``noncritical'' (not ``$(\varepsilon,\delta)$-noncritical'') in the original statement \cite[1.4.1]{P1} includes the assumption on the volume of spaces of directions (see \cite[3.7]{P1} or Definition \ref{dfn:ncv} below).
However, we do not use it here to emphasize the dependence of the fibration theorem on the volume of spaces of directions.
\end{rem}

Let us first recall the structure of the proof of the original fibration theorem.
It consists of two parts: the first half is its geometric part, which is presented in \cite[\S3]{P1}, and the second half is its topological part, which is presented in \cite[\S1]{P1}.
Furthermore, as remarked in \cite[1.3]{P1}, all the arguments in the topological part are based only on the properties of noncritical maps established in the geometric part.
In particular, we do not have to go back to the definition of noncritical maps in the topological part.
Thus, in order to prove the generalized fibration theorem, it is sufficient to verify that all the propositions about noncritical maps in the geometric part hold true for the generalized ones.
Indeed, the same proofs work well by using the inequality \eqref{eq:nc}.
Nevertheless, we give the details to explain how and where our modification works because the original proofs are very complicated.

From here (until Definition \ref{dfn:ncv}), we proceed in parallel with \cite[\S3]{P1}.
We first prove the openness of a generalized noncritical map (note that the set of generalized noncritical points of a given map is clearly open).
The following proof is a good demonstration of the usefulness of the inequality \eqref{eq:nc} and is actually the only place where we use it in the proof of the generalized fibration theorem (see also Remark \ref{rem:open} below).

\begin{prop}[cf.\ {\cite[3.2]{P1}}]\label{prop:open}
Let $U$ be an open subset of an $n$-dimensional Alexandrov space $M$.
Let $f:U\to\mathbb R^k$ be $(\varepsilon,\delta)$-noncritical in the generalized sense at any point of $U$.
Then, $k\le n$ and $f$ is $c(\varepsilon)$-open (and is locally $c(\varepsilon)^{-1}$-Lipschitz).
Furthermore, if $k=n$, then $f$ is a local bi-Lipschitz homeomorphism.
\end{prop}

We need the following elementary lemma to prove the $c(\varepsilon)$-openness, which is a metric version of \cite[2.1.1]{P1}.
The proof is easy and will be omitted.

\begin{lem}[cf.\ {\cite[2.1.1]{P1}}]\label{lem:open}
Let $f:U\to\mathbb R^k$ be a continuous (not necessarily differentiable) map defined on an open subset of an Alexandrov space.
Let $\|\cdot\|$ be an arbitrary norm on $\mathbb R^k$.
Suppose that for any $x\in U$ and $v\in\mathbb R^k$ with $f(x)\neq v$, there exists a point $y\in U$ with $y\neq x$ such that
\[\|f(y)-v\|-\|f(x)-v\|\le-\varepsilon|xy|.\]
Then, $f$ is $\varepsilon$-open with respect to the norm $\|\cdot\|$.
\end{lem}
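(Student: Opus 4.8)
The plan is to run a connectedness argument on the interval $[0,R]$, where $R:=\varepsilon^{-1}\|f(x)-v\|$, in the spirit of \cite[2.1.1]{P1}: I will not construct a ``curve of descent'', but only propagate a single sharp inequality along this interval.

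Fix $x\in U$ and $v\in\mathbb R^k$ with $\bar B(x,R)\subset U$; the goal is to find $y\in\bar B(x,R)$ with $f(y)=v$, since then $\varepsilon|xy|\le\varepsilon R=\|f(x)-v\|$. If $f(x)=v$ I take $y=x$, so assume $R>0$. Set $h:=\|f(\cdot)-v\|$, a nonnegative continuous function on $U$, and define $\phi(t):=\min_{\bar B(x,t)}h$ for $t\in[0,R]$; the minimum is attained and $\phi$ is lower semicontinuous because closed balls of $M$ are compact (as $M$ is complete and locally compact), and $\phi(0)=h(x)=\varepsilon R$. The crux is to prove
\[\phi(t)\le\varepsilon(R-t)\qquad\text{for every }t\in[0,R].\]

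To do this I would look at $S:=\{t\in[0,R]\mid\phi(t)\le\varepsilon(R-t)\}$, which is closed by lower semicontinuity of $\phi$ and contains $0$, and set $T:=\max S$. Suppose $T<R$, and pick $y_0\in\bar B(x,T)$ with $h(y_0)=\phi(T)\le\varepsilon(R-T)$. If $h(y_0)=0$ then $y_0$ is already the point we want, so assume $f(y_0)\ne v$ and apply the hypothesis at $y_0$ to obtain $z\ne y_0$ in $U$ with $h(z)\le h(y_0)-\varepsilon|y_0z|$. Put $s:=T+|y_0z|$. Then $|xz|\le|xy_0|+|y_0z|\le s$, and
\[h(z)\le h(y_0)-\varepsilon|y_0z|\le\varepsilon(R-T)-\varepsilon|y_0z|=\varepsilon(R-s).\]
Since $h(z)\ge0$ this forces $s\le R$, so $z\in\bar B(x,s)\subset\bar B(x,R)\subset U$ and $\phi(s)\le h(z)\le\varepsilon(R-s)$, i.e.\ $s\in S$; but $s=T+|y_0z|>T$ because $z\ne y_0$, contradicting $T=\max S$. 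Hence $T=R$, so $\phi(R)=\min_{\bar B(x,R)}h\le\varepsilon(R-R)=0$ and, $h$ being nonnegative, some $y\in\bar B(x,R)$ satisfies $f(y)=v$.

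The one point that needs care — and the reason for carrying the sharp bound $\phi(t)\le\varepsilon(R-t)$ rather than just $\phi(t)>0$ — is that a single use of the hypothesis at $y_0$ can move a distance as large as $h(y_0)/\varepsilon$, which a priori might overshoot $\bar B(x,R)$ and leave $U$. The bound at $T$ says precisely that the remaining ``radius budget'' $R-T$ controls $h(y_0)/\varepsilon$, so after the step one lands at radius $s\le R$ with the bound still in force. Two lesser points I expect to dispatch in a line each: that the jump from $T$ to $s$ may be large is harmless, since the argument is a supremum argument on $[0,R]$ with $S$ closed and not the construction of a flow; and that the whole proof uses nothing about $\|\cdot\|$ beyond continuity of $h$, nor anything about $M$ beyond compactness of closed balls, hence applies to an arbitrary norm as stated.
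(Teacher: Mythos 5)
Your argument is correct and complete: the paper omits the proof of this lemma as routine, and what you give is exactly the standard maximality/compactness argument one expects here, propagating the sharp bound $\min_{\bar B(x,t)}\|f(\cdot)-v\|\le\varepsilon(R-t)$ along $[0,R]$ so that each descent step of the hypothesis stays inside $\bar B(x,R)\subset U$. The only implicit ingredient is that closed bounded balls in a finite-dimensional Alexandrov space are compact (completeness plus local compactness), which you correctly identify as all that is used of $M$, and the norm enters only through continuity of $\|f(\cdot)-v\|$, so the statement holds for an arbitrary norm as claimed.
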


\begin{proof}[Proof of Proposition \ref{prop:open}]
The inequality $k\le n$ immediately follows from Lemma \ref{lem:cnt} and Definition \ref{dfn:nc}(3),(4).
Let us prove the $c(\varepsilon)$-openness of $f$.
We show that for any $p\in U$ and $1\le i\le k$, there exist $q_i^\pm\in U$ arbitrarily close to $p$ such that
\begin{equation}\label{eq:open}
\begin{aligned}
&|f_j(q_i^\pm)-f_j(p)|<\varkappa(\delta)|pq_i^\pm|&&\text{for $j<i$},\\
&c(\varepsilon)|pq_i^\pm|<\pm(f_j(q_i^\pm)-f_j(p))<c(\varepsilon)^{-1}|pq_i^\pm|&&\text{for $j=i$},\\
&|f_j(q_i^\pm)-f_j(p)|<c(\varepsilon)^{-1}|pq_i^\pm|&&\text{for $j>i$}.
\end{aligned}
\end{equation}
Let us first explain how to finish the proof, assuming the above.
Define a norm $\|v\|:=\sum_{j=1}^kc_j(\varepsilon)|v_j|$ for $v=(v_1\dots,v_k)\in\mathbb R^k$, where $c_j(\varepsilon)\ll c_{j-1}(\varepsilon)$ (the choices of $c_j(\varepsilon)$ depend on the constants $c(\varepsilon)$ in the inequalities \eqref{eq:open} so that the following argument holds).
By Lemma \ref{lem:open}, it suffices to show that for any $p\in U$ and $v\in\mathbb R^k$ with $f(p)\neq v$, there is $q\in U$ with $q\neq p$ such that
\[\|f(q)-v\|-\|f(p)-v\|\le-c(\varepsilon)|pq|.\]
Without loss of generality, we may assume $f_i(p)<v_i$ for some $i$.
Choose $q_i^+$ so close to $p$ that $f_i(q_i^+)<v_i$ and put $q:=q_i^+$.
Then, the inequalities \eqref{eq:open} and the suitable choices of $c_j(\varepsilon)$ yield the desired inequality (note $\varkappa(\delta)\ll c_j(\varepsilon)$).

Now, we prove the inequalities \eqref{eq:open}.
Set $A_j'(p):=\bigcup_{\gamma\in\Gamma_j(p)}(A_{j\gamma})'_p$ for each $1\le j\le k$.
Fix $1\le i\le k$.
Then, by Definition \ref{dfn:nc}(3),(4) and Lemma \ref{lem:dir}(1),(3), we get $\xi_i^\pm\in\Sigma_p$ such that
\begin{align*}
\pm(|A_i'(p)\xi_i^\pm|-\pi/2)>c(\varepsilon),\quad|A_j'(p)\xi_i^\pm|=\pi/2
\end{align*}
for any $j\neq i$.
Choose $q_i^\pm\in U$ near $p$ such that $(q_i^\pm)'_p$ is sufficiently close to $\xi_i^\pm$.
We show the inequalities \eqref{eq:open} by the induction on $j$.
Here is an outline.
Fix $j$ and suppose that \eqref{eq:open} hold for $j'<j$.
Then, by the definition of $g_j$ (see Definition \ref{dfn:nc}(1)), the inequality \eqref{eq:open} for $j$ with $f_j$ replaced by $g_j$ holds.
Together with the inequality \eqref{eq:nc}, this implies \eqref{eq:open} for $j$.
Let us demonstrate this in the case where $k=3$, $i=2$ and the sign is $+$.
Put $\xi:=\xi_2^+$ and $q:=q_2^+$.
First,
\begin{align*}
g_1(q)-g_1(p)&=\min_{\gamma\in\Gamma_1(p)}\bigl\{\varphi_{1\gamma}(|A_{1\gamma}q|)-\varphi_{1\gamma}(|A_{1\gamma}p|)\bigr\}\\
&=\min_{\gamma\in\Gamma_1(p)}\bigl\{-\varphi_{1\gamma}'(|A_{1\gamma}p|)\cos|(A_{1\gamma})'_p\xi|\cdot|pq|+o(|pq|)\bigr\},
\end{align*}
where $\varphi_{1\gamma}'$ denotes the right or left derivative of $\varphi_{1\gamma}$.
Since $\varphi_{1\gamma}'\ge0$ and $|A_1'(p)\xi|=\pi/2$, we have
\[|g_1(q)-g_1(p)|<\delta|pq|\]
provided $q$ is sufficiently close to $p$.
Then, the inequality \eqref{eq:nc} implies
\[|f_1(q)-f_1(p)|<2\delta|pq|.\]
Second,
\begin{align*}
&g_2(q)-g_2(p)\\
&=\min_{\gamma\in\Gamma_2(p)}\bigl\{\varphi_{2\gamma}(|A_{2\gamma}q|)-\varphi_{2\gamma}(|A_{2\gamma}p|)+\varphi_{2\gamma}^1(f_1(q))-\varphi_{2\gamma}^1(f_1(p))\bigr\}\\
&=\min_{\gamma\in\Gamma_2(p)}\bigl\{-\varphi_{2\gamma}'(|A_{2\gamma}p|)\cos|(A_{2\gamma})'_p\xi|\cdot|pq|+o(|pq|)+\varphi_{2\gamma}^1(f_1(q))-\varphi_{2\gamma}^1(f_1(p))\bigr\}.
\end{align*}
Since $\varphi_{2\gamma}^1$ is $\varepsilon^{-1}$-Lipschitz, the inequality obtained in the previous step yields $|\varphi_{2\gamma}^1(f_1(q))-\varphi_{2\gamma}^1(f_1(p))|<\varkappa(\delta)|pq|$.
Furthermore, since $\varepsilon\le\varphi_{2\gamma}'\le\varepsilon^{-1}$ and $|A_2'(p)\xi|>\pi/2+c(\varepsilon)$, we have
\[c(\varepsilon)|pq|<g_2(q)-g_2(p)<c(\varepsilon)^{-1}|pq|\]
provided $q$ is sufficiently close to $p$ (note $\varkappa(\delta)\ll c(\varepsilon)$).
Then, the inequality \eqref{eq:nc} implies
\[c(\varepsilon)|pq|<f_2(q)-f_2(p)<c(\varepsilon)^{-1}|pq|.\]
Finally,
\begin{align*}
&g_3(q)-g_3(p)\\
&=\min_{\gamma\in\Gamma_3(p)}\Bigl\{\varphi_{3\gamma}(|A_{3\gamma}q|)-\varphi_{3\gamma}(|A_{3\gamma}p|)+\sum_{l=1,2}\bigl(\varphi_{3\gamma}^l(f_l(q))-\varphi_{3\gamma}^l(f_l(p))\bigr)\Bigr\}.
\end{align*}
Since both $\varphi_{3\gamma}$ and $\varphi_{3\gamma}^l$ are $\varepsilon^{-1}$-Lipschitz, we have
\[|g_3(q)-g_3(p)|<c(\varepsilon)^{-1}|pq|.\]
Then, the inequality \eqref{eq:nc} implies
\[|f_3(q)-f_3(p)|<c(\varepsilon)^{-1}|pq|.\]
This completes the proof of the inequalities \eqref{eq:open} in our special case.
The general case is similar.
Note that the local $c(\varepsilon)^{-1}$-Lipschitzness of $f$ also follows from a similar inductive argument.

Next, we consider the case $k=n$.
It suffices to show that $f$ is injective near each $p\in U$.
Let $\rho=\rho(p)>0$ and $w=w(p)\in M$ be as in Definition \ref{dfn:nc} and take $0<r<\min_{i,\gamma\in\Gamma_i(p)}\{\rho,\delta|wp|,\delta|A_{i\gamma}p|\}$.
Suppose there exist two distinct points $x,y\in B(p,r)$ such that $f(x)=f(y)$.
We may assume $|wx|\le|wy|$.
In particular, we have $\tilde\angle wxy>\pi/2-\varkappa(\delta)$.
On the other hand, by the definition of noncriticality, we have $\tilde\angle A_{i\gamma}xw>\pi/2+c(\varepsilon)$ and $\tilde\angle A_{i\gamma}xA_{j\beta}>\pi/2-\varkappa(\delta)$ for all $i\neq j$ and $\gamma\in\Gamma_i(p)$, $\beta\in\Gamma_j(p)$.
Furthermore, we show $\tilde\angle A_{i\gamma}xy>\pi/2-\varkappa(\delta)$ for all $i$ and $\gamma\in\Gamma_i(x)$ (note $\Gamma_i(x)\subset\Gamma_i(p)$).
Then, these inequalities contradict Lemma \ref{lem:cnt} for $\Sigma_x$ since $\varkappa(\delta)\ll c(\varepsilon)$.
Let $\gamma\in \Gamma_i(x)$.
We may assume $|A_{i\gamma}x|>|A_{i\gamma}y|$; otherwise, we have $\tilde\angle A_{i\gamma}xy>\pi/2-\varkappa(\delta)$.
Then, we have
\begin{align*}
g_i(x)-g_i(y)&\ge g_{i\gamma}(x)-g_{i\gamma}(y)\\
&=\varphi_{i\gamma}(|A_{i\gamma}x|)-\varphi_{i\gamma}(|A_{i\gamma}y|)\\
&\ge\varepsilon(|A_{i\gamma}x|-|A_{i\gamma}y|),
\end{align*}
since $f(x)=f(y)$ and $\varphi_{i\gamma}$ is an increasing function with co-Lipschitz constant $\varepsilon$.
On the other hand, we have $|g_i(x)-g_i(y)|<\delta|xy|$ by the inequality \eqref{eq:nc}.
Together with the above inequality, this implies $\tilde\angle A_{i\gamma}xy>\pi/2-\varkappa(\delta)$.
\end{proof}

\begin{rem}\label{rem:open}
The same arguments as in the above proof show the following two properties.
Let $f:U\to \mathbb R^k$ be $(\varepsilon,\delta)$-noncritical in the generalized sense at $p\in U$.
\begin{enumerate}
\item Let $\xi\in\Sigma_p$ be a direction such that $|A_i'(p)\xi|=\pi/2$ for all $i$, where $A_i'(p)=\bigcup_{\gamma\in\Gamma_i(p)}(A_{i\gamma})'_p$.
Then, for any $q\in U$ sufficiently close to $p$ such that $q'_p$ is sufficiently close to $\xi$, we have $|f(p)f(q)|<\varkappa(\delta)|pq|$.
\item Let $0<r<\min_{i,\gamma\in\Gamma_i(p)}\{\rho(p),\delta|A_{i\gamma}p|\}$.
Then, for any $x,y\in B(p,r)$ with $|f(x)f(y)|<\delta|xy|$, we have $\tilde\angle A_{i\gamma}xy>\pi/2-\varkappa(\delta)$ for all $i$ and $\gamma\in\Gamma_i(x)$.
\end{enumerate}
Indeed, (1) follows from the same inductive argument as in the proof of the inequalities \eqref{eq:open} for $j<i$.
(2) follows from the same argument as in the proof of the local injectivity when $k=n$ (note that the assumption $k=n$ is not needed here and that the condition $f(x)=f(y)$ used in the above proof can be weakened to $|f(p)f(q)|<\delta|pq|$).
Hereafter, we will often use these properties as well as Proposition \ref{prop:open}.
Furthermore, as long as we use them, we do not need the inequality \eqref{eq:nc} anymore in the proof of the generalized fibration theorem.
\end{rem}

From now on, we consider the case $k<n$ unless otherwise stated.
The next proposition is actually unnecessary for the proof of the fibration theorem, but is shown here (it is used to prove the stability theorem \cite[4.3]{P1}).

\begin{prop}[cf.\ {\cite[3.3]{P1}}]\label{prop:conn}
Let $f:U\to\mathbb R^k$ be $(\varepsilon,\delta)$-noncritical in the generalized sense at $p\in U$.
Let $\Pi:=f^{-1}(f(p))$, $\rho_0:=\delta\min_{i,\gamma\in\Gamma_i(p)}\{\rho(p),|w(p)p|,|A_{i\gamma}p|\}$ and $q,r\in\Pi\cap B(p,\rho_0)$.
Then, there exists a curve in $\Pi$ connecting $q$ and $r$ of length $\le c(\varepsilon)|qr|$.
\end{prop}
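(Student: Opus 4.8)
The plan is to run the proof of \cite[3.3]{P1} essentially unchanged, replacing ``noncritical'' by ``$(\varepsilon,\delta)$-noncritical in the generalized sense'' and invoking Proposition \ref{prop:open} and Remark \ref{rem:open} wherever \cite{P1} invokes \cite[3.2]{P1}. The key observation is that, by Remark \ref{rem:open}, once the $c(\varepsilon)$-openness of $f$ and the two properties stated there are available, the inequality \eqref{eq:nc} is never used again: the construction of the connecting curve only sees the relative position in $\Sigma_x$ of the sets $A_i'(x)=\bigcup_{\gamma\in\Gamma_i(x)}(A_{i\gamma})'_x$ and of the direction $w'_x$ (where $w=w(p)$), for $x$ near $p$, which is identical to \cite[\S3]{P1}. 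Thus no genuinely new idea is required; the role of this proposition here is only to record that the generalization is harmless, and I would reproduce the proof for completeness.

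Concretely, fix $q,r\in\Pi\cap B(p,\rho_0)$ and set $\ell:=|qr|$. The curve is produced by an infinitesimal descent towards $r$ inside $\Pi$. Running with step $\tau\to0$: at a current point $x\in\Pi$, still close enough to $p$ that Definition \ref{dfn:nc}(3),(4) and Remark \ref{rem:open} hold up to $\varkappa(\delta)$ at $x$, one selects (as discussed below) a direction $\xi\in\Sigma_x$ with $|A_i'(x)\xi|=\pi/2$ for all $i$ and $|\xi r'_x|<\pi/2-c(\varepsilon)$, and moves along $\xi$ by $\tau$ to a point $y$. By Remark \ref{rem:open}(1) applied at $x$ one has $|f(x)f(y)|<\varkappa(\delta)\tau$, so the $c(\varepsilon)$-openness of $f$ (Proposition \ref{prop:open}) provides $x'\in\Pi=f^{-1}(f(p))$ with $|yx'|<c(\varepsilon)^{-1}\varkappa(\delta)\tau$; combining the first variation of $|r\,\cdot\,|$ along the two moves,
\[|x'r|\le|xr|-c(\varepsilon)\tau+\varkappa(\delta)\tau+O(\tau^2)\le|xr|-\tfrac12c(\varepsilon)\tau\]
for $\tau$ small, since $\varkappa(\delta)$ is much smaller than $c(\varepsilon)$. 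Each such step costs length $\le2\tau$ and reduces $|r\,\cdot\,|$ by $\ge\tfrac12c(\varepsilon)\tau$, so summing over the $O(\ell/(c(\varepsilon)\tau))$ steps needed to reach $r$ gives a curve in $\Pi$ of length $O(c(\varepsilon)^{-1}\ell)$; letting $\tau\to0$ and passing to a limit of these (uniformly Lipschitz) curves, which stays in the closed set $\Pi$, yields a curve in $\Pi$ from $q$ to $r$ of length $\le c(\varepsilon)^{-1}|qr|$ — the asserted bound, the constant there being $\ge1$ here. Since the total length is $\le c(\varepsilon)^{-1}\ell\le2c(\varepsilon)^{-1}\rho_0$ and $\delta$, hence $\varkappa(\delta)$, is much smaller than every $c(\varepsilon)$, the whole curve stays in the region where the above estimates are valid — shrinking the neighbourhood $\rho(p)$ of Definition \ref{dfn:nc}(2) at the outset if necessary — so the construction is legitimate.

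The one point that needs real work, and where I would simply follow \cite[\S3]{P1} line by line, is the selection of $\xi$: one must show that at every $x$ there is a direction that keeps $f$ approximately constant (equivalently, by Remark \ref{rem:open}(1), is almost orthogonal to all the $A_i'(x)$) and along which $|r\,\cdot\,|$ decreases at a rate bounded below by $c(\varepsilon)$. When $r'_x$ is itself almost orthogonal to every $A_i'(x)$ one may take $\xi=r'_x$; otherwise one extracts $\xi$ from Lemma \ref{lem:dir}, using $w'_x$ (which is more than orthogonal to every $A_i'(x)$, by Definition \ref{dfn:nc}(4)) as the distinguished direction and Lemma \ref{lem:cnt} to exclude the degenerate configurations of $\Sigma_x$; the resulting case analysis is precisely that of \cite[\S3]{P1}. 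I expect this direction selection to be the main obstacle; everything else — the first-variation estimate, the fact that the drift off $\Pi$ per unit length is $\le\varkappa(\delta)$ and hence negligible against $c(\varepsilon)$, and the bookkeeping of radii — is routine given the standing conventions on $\delta$ and $\varkappa(\delta)$.
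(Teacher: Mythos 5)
Your overall scheme --- take a step in a direction $\xi$ that is almost orthogonal to all the $A_i'(x)$ so that $f$ moves by at most $\varkappa(\delta)$ per unit length (Remark \ref{rem:open}(1)), correct back onto $\Pi$ by the $c(\varepsilon)$-openness of Proposition \ref{prop:open}, gain $c(\varepsilon)\tau$ on $|r\,\cdot\,|$ per step, and pass to a limit of broken geodesics --- is exactly the paper's scheme, and your bookkeeping of lengths and of staying inside the good region (the factor $\delta$ in $\rho_0$ together with $\delta\ll c(\varepsilon)$ is what makes this work) is fine. But the one step you explicitly defer, the selection of $\xi$, is the entire content of the proposition, and the sketch you give for it does not close. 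The paper's selection rests on a reduction you never make: assume (at each step) that the point you move is the endpoint \emph{closer to} $w=w(p)$, say $|wq|\le|wr|$. Since $|qr|\le 2\rho_0\le 2\delta|wp|$, triangle comparison then gives $\tilde\angle wqr>\pi/2-\varkappa(\delta)$, i.e.\ $|w'_q r'_q|>\pi/2-\varkappa(\delta)$, and this is precisely the hypothesis that lets one apply Lemma \ref{lem:dir}(2) to the $k+2$ sets $w'_q$, $r'_q$, $A_1'(q),\dots,A_k'(q)$ (the other hypotheses come from Definition \ref{dfn:nc}(3),(4) and Remark \ref{rem:open}(2)), producing $\xi$ with $|A_i'(q)\xi|=\pi/2$ and $|r'_q\xi|<\pi/2-c(\varepsilon)$.

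In your one-sided iteration (always descending from the current point $x$ toward the fixed target $r$) this hypothesis is not self-sustaining: the conclusion of Lemma \ref{lem:dir}(2) only gives $|\xi w'_x|>\pi/2-\varkappa(\delta)$, so $|w\,\cdot\,|$ may \emph{increase} at unit rate along your steps, and once $|xr|$ has shrunk well below $|qr|$ you may well have $|wx|-|wr|\gg|xr|$, so that $\tilde\angle wxr$ is small and Lemma \ref{lem:dir}(2) with $A_1=w'_x$, $A_2=r'_x$ is not applicable. Your fallback cases do not cover this situation either: Remark \ref{rem:open}(2) gives only the one-sided bound $|A_i'(x)r'_x|>\pi/2-\varkappa(\delta)$, so $r'_x$ may make angles $>\pi/2+\varepsilon$ with all the $A_i'(x)$ (e.g.\ $r'_x$ close to $w'_x$); then taking $\xi=r'_x$ moves $f$ at a definite rate (so the openness correction is of the same order as the step and no progress is guaranteed), while Lemma \ref{lem:dir}(1),(3) with $w'_x$ or $r'_x$ distinguished either needs $|w'_xr'_x|>\pi/2+\varepsilon$ or pushes $\xi$ \emph{away} from $r'_x$, i.e.\ gives the wrong sign. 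The fix is the paper's: at every stage move whichever of the two current endpoints is closer to $w$ (so the curve is built from both ends), which restores the hypothesis $\tilde\angle wqr>\pi/2-\varkappa(\delta)$ at the moving endpoint; the telescoping estimate $\sum|q_iq_{i+1}|\le c(\varepsilon)^{-1}|qr|$ then goes through as you describe. As written, your proposal is missing this idea, so the central step is a genuine gap rather than a routine transcription of \cite[\S3]{P1}.
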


\begin{proof}
We may assume that $|wq|\le|wr|$.
Then, we have $|w'_q,r'_q|>\pi/2-\varkappa(\delta)$.
By Remark \ref{rem:open}(2), we have $|(A_{i\gamma})'_q,r'_q|>\pi/2-\varkappa(\delta)$ for all $i$ and $\gamma\in\Gamma_i(q)$.
Moreover, by the definition of noncriticality, we have $|(A_{i\gamma})'_q,w'_q|>\pi/2+c(\varepsilon)$ and $|(A_{i\gamma})'_q,(A_{j\beta})'_q|>\pi/2-\varkappa(\delta)$ for all $i\neq j$ and $\gamma\in\Gamma_i(q)$, $\beta\in\Gamma_j(q)$.
Set $A_i'(q):=\bigcup_{\gamma\in\Gamma_i(q)}(A_{i\gamma})'_q$.
Then, applying Lemma \ref{lem:dir}(2) to $w'_q$, $r'_q$, $A_1'(q)$, $\dots$, $A_k'(q)$, we get a direction $\xi\in\Sigma_q$ such that
\[|r'_q\xi|<\pi/2-c(\varepsilon),\quad|A_i'(q)\xi|=\pi/2\]
for all $i$.
Choose $q_1$ near $q$ such that $(q_1)'_q$ is close to $\xi$.
Then, the first inequality above implies $|rq_1|\le|rq|-c(\varepsilon)|qq_1|$.
Furthermore, the second inequalities imply $|f(q)f(q_1)|<\varkappa(\delta)|qq_1|$ (see Remark \ref{rem:open}(1)).
By the $c(\varepsilon)$-openness of $f$, we obtain $q_2\in\Pi$ near $q_1$ such that $c(\varepsilon)|q_1q_2|\le|f(q_1)f(q)|$ (cf.\ \cite[2.1.3]{P1}).
Therefore, we have
\begin{align*}
|rq_2|&\le|rq_1|+|q_1q_2|\\
&\le|rq|-c(\varepsilon)|qq_1|+c(\varepsilon)^{-1}\varkappa(\delta)|qq_1|\\
&\le|rq|-c(\varepsilon)|qq_2|.
\end{align*}
Now, the desired curve is obtained by taking a limit of broken geodesics.
\end{proof}

The following setting will be used in all the arguments below.

\begin{set}[cf.\ {\cite[3.4]{P1}}]\label{set:per}
Let $f:U\to\mathbb R^k$ be $(\varepsilon,\delta)$-noncritical in the generalized sense at $p\in U$, where $U$ is an open subset of an $n$-dimensional Alexandrov space $M$ and $k<n$.
Assume $\vol_{n-1}\Sigma_p\ge\varepsilon$.
Let $w=w(p)\in M$ be as in Definition \ref{dfn:nc}.
Then, the Bishop-Gromov inequality implies that $\vol_{n-1}B({w'_p},\varepsilon/2)\ge c(\varepsilon)$.
Let $0<\omega<\delta$.
Choose points $\{w_\alpha\}_{\alpha=1}^N\subset M$ near $p$ such that 
\begin{itemize}
\item $N\ge L/\omega^{n-1}$, where $L=c(\varepsilon)$;
\item $\tilde\angle w_\alpha pw_\beta>\omega$ ($1\le\alpha\neq\beta\le N$);
\item $\tilde\angle w_\alpha p A_{i\gamma}>\pi/2+\varepsilon/2$ ($1\le\alpha\le N$, $1\le i\le k$, $\gamma\in\Gamma_i(p)$).
\end{itemize}
Let $V$ be a small neighborhood of $p$ such that for any $x\in V$
\begin{itemize}
\item $\tilde\angle w_\alpha xw_\beta>\omega$ ($1\le\alpha\neq\beta\le N$);
\item $\tilde\angle w_\alpha x A_{i\gamma}>\pi/2+\varepsilon/2$ ($1\le\alpha\le N$, $1\le i\le k$, $\gamma\in\Gamma_i(p)$);
\item $\tilde\angle A_{i\alpha}xA_{j\beta}>\pi/2-\delta$ ($1\le i\neq j\le k$, $\alpha\in\Gamma_i(p)$, $\beta\in\Gamma_j(p)$);
\item $|xp|<\min\{\rho(p),\delta|pw_\alpha|,\delta|pA_{i\gamma}|\mid{1\le\alpha\le N,1\le i\le k,\gamma\in\Gamma_i(p)}\}$.
\end{itemize}
Define a function $\sigma:V\to\mathbb R$ by
\[\sigma(x):=\frac1N\sum_{\alpha=1}^N|w_\alpha x|.\]
\end{set}

We first prove the following two lemmas under the above setting.

\begin{lem}[cf.\ {\cite[3.4 Assertion 1]{P1}}]\label{lem:per1}
Under Setting \ref{set:per}, let $x,y\in V$ be such that $|f(x)f(y)|<\delta|xy|$.
Then, one of the following holds:
\begin{enumerate}
\item $(f,|x,\cdot|)$ is $(c(\varepsilon),\varkappa(\delta))$-noncritical in the generalized sense at $y$;
\item $\sigma(y)-\sigma(x)>c(\varepsilon)|xy|$.
\end{enumerate}
\end{lem}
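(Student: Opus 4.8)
The plan is to follow the proof of \cite[3.4, Assertion~1]{P1} essentially verbatim; the only new ingredient is that the first–variation properties of distance functions used there at the point $y$ are supplied here by the inequality \eqref{eq:nc} through Remark~\ref{rem:open}(2). So assume $|f(x)f(y)|<\delta|xy|$, suppose (1) fails, and let us deduce (2); the threshold $c_0(\varepsilon)$ occurring in (1) will be fixed only at the very end, much smaller than the constant $L=c(\varepsilon)$ of Setting~\ref{set:per}. The first point is that, for the map $(f,|x\,\cdot\,|)=(f_1,\dots,f_k,|x\,\cdot\,|)$ at $y$, the only condition of Definition~\ref{dfn:nc} that can fail is condition (4). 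Indeed, condition (1) for $f_1,\dots,f_k$ is inherited from $p$ with the same functions $g_i$ and the same inequality \eqref{eq:nc} (the parameters only improve, since $c_0(\varepsilon)<\varepsilon$ and $\delta<\varkappa(\delta)$), while for the new coordinate one takes $g_{k+1}=|x\,\cdot\,|$ (a single index, $\varphi=\mathrm{id}$, $A_{(k+1)*}=\{x\}$), so \eqref{eq:nc} is trivial; condition (2) is inherited for $i\le k$ (because $y\in V$ forces $\Gamma_i(y)\subseteq\Gamma_i(p)$) and vacuous for $i=k+1$, with any small $\rho(y)>0$; and condition (3) holds for $i,j\le k$ by the third property of $V$ in Setting~\ref{set:per}, while for $j=k+1$ it reads $\tilde\angle A_{i\gamma}yx>\pi/2-\varkappa(\delta)$ ($\gamma\in\Gamma_i(y)$), which is precisely Remark~\ref{rem:open}(2) for our pair $x,y$ (with the roles of $x$ and $y$ interchanged). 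Hence the failure of (1) means exactly: there is no $w^*\in M$ with $\tilde\angle A_{i\gamma}yw^*>\pi/2+c_0(\varepsilon)$ for all $i\le k$, $\gamma\in\Gamma_i(y)$, and with $\tilde\angle xyw^*>\pi/2+c_0(\varepsilon)$.

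From this I extract that every direction towards a $w_\alpha$ is almost orthogonal to $x'_y$ in $\Sigma_y$. Each $w_\alpha$ satisfies $\tilde\angle A_{i\gamma}yw_\alpha>\pi/2+\varepsilon/2>\pi/2+c_0(\varepsilon)$ (third property of $V$ at $y\in V$), so by the above it must satisfy $\tilde\angle xyw_\alpha\le\pi/2+c_0(\varepsilon)$. Let $d_\alpha$ denote the distance in $\Sigma_y$ between the set $x'_y$ of directions of shortest paths $y\to x$ and the set of directions of shortest paths $y\to w_\alpha$; since $\tilde\angle\le\angle$ for any pair of shortest paths, $\tilde\angle w_\alpha yx\le d_\alpha$. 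I claim $d_\alpha\le\pi/2+2c_0(\varepsilon)$ for all $\alpha$: otherwise, choose a shortest path $[y,w_\alpha]$ and a point $w^*$ on it close to $y$; by Alexandrov monotonicity of comparison angles (and the first variation of the distances to the compact sets $A_{i\gamma}$) the angle $\tilde\angle A_{i\gamma}yw^*$ stays $>\pi/2+\varepsilon/2$, while $\tilde\angle xyw^*$ converges, as $w^*\to y$, to the $\Sigma_y$-distance from the direction of $[y,w_\alpha]$ to $x'_y$, which is $\ge d_\alpha>\pi/2+2c_0(\varepsilon)$; so a small enough such $w^*$ would contradict the preceding paragraph.

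Now apply Lemma~\ref{lem:vol} in $\Sigma_y$. For each $\alpha$ pick $\eta_\alpha$ in the direction set of $y\to w_\alpha$ with $|x'_y\,\eta_\alpha|=d_\alpha$; then $\eta_1,\dots,\eta_N\in\Sigma_y$ are $\omega$-separated (for any choice of representatives, $|\eta_\alpha\eta_\beta|\ge\tilde\angle w_\alpha yw_\beta>\omega$ by the first property of $V$) and $N\ge L/\omega^{n-1}$, so by Lemma~\ref{lem:vol} with $A=x'_y$ at most $2Cc_0(\varepsilon)/\omega^{n-1}$ of them lie in $x'_y(\pi/2\pm2c_0(\varepsilon))$. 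Choosing $c_0(\varepsilon)\le L/(200C)$ this is at most $N/100$; by the previous step every remaining $\alpha$ has $d_\alpha<\pi/2-2c_0(\varepsilon)$, hence $\tilde\angle w_\alpha yx<\pi/2-2c_0(\varepsilon)$. Finally the standard comparison $|w_\alpha y|-|w_\alpha x|\ge|xy|\cos\tilde\angle(w_\alpha yx)-C|xy|^2$ (with $C$ depending on $\kappa$ and on the fixed positive lower bound of the numbers $|w_\alpha y|$) gives $|w_\alpha y|-|w_\alpha x|\ge|xy|\sin2c_0(\varepsilon)-C|xy|^2$ for the at least $99N/100$ good indices and $\ge-|xy|\sin2c_0(\varepsilon)-C|xy|^2$ for the rest; averaging, $\sigma(y)-\sigma(x)\ge\tfrac{49}{50}|xy|\sin2c_0(\varepsilon)-C|xy|^2$, which exceeds $c(\varepsilon)|xy|$ (with, say, $c(\varepsilon)=c_0(\varepsilon)/2$) once $V$ is shrunk so that $|xy|<c_0(\varepsilon)/(4C)$. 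This is (2).

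The substantive difficulty is not conceptual: the modification \eqref{eq:nc} enters only once, in the first step, via Remark~\ref{rem:open}(2). It is rather bookkeeping — the choices must be ordered so that the noncriticality threshold $c_0(\varepsilon)$ is fixed only after $L$ — together with the care needed in the second step to pass from the comparison-angle hypothesis (a statement about points of $M$) to the statement in the angle metric of $\Sigma_y$ to which Lemma~\ref{lem:vol} applies, which is exactly why one perturbs $w^*$ towards $y$ and must keep track of the multivaluedness of the direction sets.
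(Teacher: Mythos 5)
Your proof is correct and is essentially the paper's argument: conditions (1)--(3) of Definition \ref{dfn:nc} are supplied by Remark \ref{rem:open}(2) and the properties of $V$ in Setting \ref{set:per}, and the heart of the matter is the counting via Lemma \ref{lem:vol} combined with perturbing $w_\alpha$ along the shortest path toward $y$ to pass between comparison angles and distances of direction sets in $\Sigma_y$. The only difference is organizational: you argue the contrapositive (failure of (1) forces every $|(w_\alpha)'_y X'_y|\le\pi/2+2c_0(\varepsilon)$, so by Lemma \ref{lem:vol} most are $<\pi/2-2c_0(\varepsilon)$ and (2) follows by averaging the comparison inequality), whereas the paper assumes (2) fails and uses the same counting to exhibit one $\alpha$ with $|(w_\alpha)'_y X'_y|>\pi/2+b$, a nearby point on $yw_\alpha$ then serving as $w(y)$ in condition (4).
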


\begin{proof}
Observe that the conditions (1)--(3) of Definition \ref{dfn:nc} for $(f,|x\cdot|)$ at $y$ are clearly satisfied (see Remark \ref{rem:open}(2)).
The rest of the proof is exactly the same as the original one.
Let $a=c(\varepsilon)$ and $b=c(\varepsilon)$ be constants such that $a\ll b\ll L$.
Suppose that (2) does not hold for this $a$.
Then, the mean value of $\cos|(w_\alpha)'_y,X'_y|$ is less than $2a$, where $X'_y$ denotes the set of all directions of shortest paths from $y$ to $x$.
On the other hand, by Lemma \ref{lem:vol}, the number of $\alpha$ such that $||(w_\alpha)'_y,X'_y|-\pi/2|\le b$ is less than $Cb/\omega^{n-1}\ll N$.
Hence, there exists $\alpha$ such that $|(w_\alpha)'_y,X'_y|>\pi/2+b$; otherwise, the mean value of $\cos|(w_\alpha)'_y,X'_y|$ is greater than $c(b)>2a$, a contradiction.
Thus, a point $w(y)$ on a shortest path $w_\alpha y$ sufficiently close to $y$ satisfies the condition (4) of Definition \ref{dfn:nc} for $(f,|x\cdot|)$ at $y$.
\end{proof}

\begin{lem}[cf.\ {\cite[3.4 Assertion 2]{P1}}]\label{lem:per2}
Under Setting \ref{set:per}, let $x,y\in V$ be such that $|f(x)f(y)|<\delta|xy|$ and assume that $x$ is a local maximum point of $\sigma|_{f^{-1}(f(x))}$.
Then, there exists $\alpha$ such that $\tilde\angle w_\alpha yx>\pi/2+c(\varepsilon)$.
\end{lem}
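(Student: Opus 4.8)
I would argue by contradiction, following the pattern of the proof of Lemma~\ref{lem:per1} but with the base point moved from $y$ to $x$: the local maximality of $\sigma$ on $f^{-1}(f(x))$ will play the role that the failure of alternative~(2) in Lemma~\ref{lem:per1} played there. So suppose that $\tilde\angle w_\alpha xy\le\pi/2+c(\varepsilon)$ for every $\alpha$, with $c(\varepsilon)$ a small constant to be fixed. Shrinking $V$ if necessary so that $|xy|$ is small, I may also arrange that the comparison angles $\tilde\angle w_\alpha xy$ are as close as I wish to the genuine angles $\angle w_\alpha xy=|(w_\alpha)'_x,y'_x|$.

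The first step extracts what the contradiction hypothesis buys via Perelman's measure estimate. The directions $(w_\alpha)'_x$ form an $\omega$-separated set of cardinality $N\ge L/\omega^{n-1}$ with $L=c(\varepsilon)$, so by Lemma~\ref{lem:vol} at most $Cb/\omega^{n-1}\ll N$ of them lie in $y'_x(\pi/2\pm b)$ for $b:=c(\varepsilon)$; combined with $\angle w_\alpha xy<\pi/2+b$ for all $\alpha$, this forces all but $\ll N$ of the indices to satisfy $\angle w_\alpha xy<\pi/2-b$, hence $\frac1N\sum_\alpha\cos\angle w_\alpha xy>c(\varepsilon)$. In other words the $w_\alpha$ ``point toward $y$'' from $x$ on average.

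The second step is to produce a ``vertical'' direction along which $\sigma$ nevertheless increases. Using $\vol_{n-1}\Sigma_p\ge\varepsilon$ (so $\vol_{n-1}\Sigma_x\ge c(\varepsilon)$ by Bishop--Gromov), the fact that $y'_x$ is $\varkappa(\delta)$-almost perpendicular to each $A_i'(x):=\bigcup_{\gamma\in\Gamma_i(x)}(A_{i\gamma})'_x$ (Remark~\ref{rem:open}(2)), and Lemma~\ref{lem:dir}, I would find $\eta\in\Sigma_x$ with $|A_i'(x)\eta|=\pi/2$ for all $i$ and $|\eta,y'_x|>\pi/2+c(\varepsilon)$. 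Since all but $\ll N$ of the $(w_\alpha)'_x$ lie within $\pi/2-b$ of $y'_x$, this gives $\frac1N\sum_\alpha\cos|(w_\alpha)'_x,\eta|\ge c(\varepsilon)$, i.e.\ $\sigma$ grows at a definite rate in the direction $\eta$. Moving from $x$ a small distance along $\eta$ keeps $f$ essentially constant (Remark~\ref{rem:open}(1)), and projecting the resulting point back into $f^{-1}(f(x))$ by the $c(\varepsilon)$-openness of $f$ (Proposition~\ref{prop:open}, exactly as in the proof of Proposition~\ref{prop:conn}) costs only $\varkappa(\delta)$ times the distance moved, because $\sigma$ is $1$-Lipschitz. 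Since $\varkappa(\delta)\ll c(\varepsilon)$, we obtain points of $f^{-1}(f(x))$ arbitrarily close to $x$ whose $\sigma$-value exceeds $\sigma(x)$, contradicting local maximality.

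I expect the second step to be the main obstacle: constructing a vertical direction $\eta$ pointing away from $y'_x$ forces one to quantify that the fiber $f^{-1}(f(x))$ is genuinely non-degenerate at $x$, which is precisely what the hypothesis $\vol_{n-1}\Sigma_p\ge\varepsilon$ provides. Making this rigorous requires the fine structure of $\Sigma_x$ for a (generalized) noncritical map together with Perelman's lemmas on spaces of curvature $\ge1$, and — as in \cite[3.4]{P1} — is the delicate technical heart of the argument; by contrast, the bookkeeping with the inequality~\eqref{eq:nc} is handled once and for all through Remark~\ref{rem:open}, so it does not reappear here.
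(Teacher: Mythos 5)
Your overall skeleton (use the maximality of $\sigma$ to manufacture a direction of increase that is perpendicular to the sets $A_i'(x)$, then push back into the fiber by $c(\varepsilon)$-openness via Remark~\ref{rem:open}) is the right one, and your final projection step coincides with the paper's. But the two central steps have genuine gaps. First, your contradiction set-up requires converting the assumed upper bounds on the comparison angles $\tilde\angle w_\alpha xy$ into upper bounds on the genuine angles $|(W_\alpha)'_x,y'_x|$, and you justify this by ``shrinking $V$ so that comparison angles are close to genuine angles.'' This transfer is not available: the only general relation is $\tilde\angle\le\angle$, so an upper bound on a comparison angle never bounds the genuine angle from above; closeness of the two requires a strainer-type hypothesis (as in Lemma~\ref{lem:str}, which needs a point opposite to $w_\alpha$), and the convergence $\tilde\angle w_\alpha xy\to\angle w_\alpha xy$ as $y\to x$ is not uniform over pairs $x,y$ in the fixed neighborhood $V$ of Setting~\ref{set:per}. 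The paper runs the implication in the only valid direction: it proves directly, by contradicting the maximality of $\sigma$, that $|(W_\alpha)'_x,y'_x|<\pi/2-c(\varepsilon)$ for some $\alpha$, and only then passes to comparison angles (small genuine angle at $x$ forces $|w_\alpha y|\le|w_\alpha x|-c(\varepsilon)|xy|$, and since $|xy|\ll|w_\alpha x|$ this makes the comparison angle large; note this large angle naturally sits at the endpoint $y$, exactly as in the analogous Subclaim~\ref{sbclm:up1} and as it is used in the proof of Proposition~\ref{prop:per1}).

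Second, your construction of the ascent direction $\eta$ fails. The first variation of $\sigma=\frac1N\sum_\alpha|w_\alpha\cdot|$ along $\eta$ is $-\frac1N\sum_\alpha\cos|(W_\alpha)'_x,\eta|$, so your claimed inequality $\frac1N\sum_\alpha\cos|(w_\alpha)'_x,\eta|\ge c(\varepsilon)$ would make $\sigma$ \emph{decrease} along $\eta$; you need the directions to the $w_\alpha$ to be obtuse to $\eta$ on average. Even after fixing the sign, the estimate does not follow: from $|(w_\alpha)'_x,y'_x|<\pi/2-b$ and $|\eta,y'_x|>\pi/2+c$ the triangle inequality in $\Sigma_x$ only gives $|(w_\alpha)'_x,\eta|>b+c$, which does not decide on which side of $\pi/2$ that angle lies. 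It is also unclear how Lemma~\ref{lem:dir} would produce your $\eta$, since one needs a first set $A_1$ at distance $>\pi/2+\varepsilon$ from $y'_x$ and from all $A_i'(x)$, and your step 1 has placed almost all candidate sets $(W_\alpha)'_x$ close to $y'_x$. The paper's proof avoids all of this by splitting the indices according to the angle to $y'_x$, taking as $A_1$ the union $W'(x)$ of the directions $(W_\alpha)'_x$ that are already at distance $>\pi/2+a$ from $y'_x$, applying Lemma~\ref{lem:dir}(1) to get $\xi$ with $|W'(x)\xi|>\pi/2+c(a)$ and $|A_i'(x)\xi|=\pi/2$, and then using Lemma~\ref{lem:vol} to count that all but $\ll N$ of the $w_\alpha$ contribute a definite increase of $\sigma$ along $\xi$, while the few remaining ones lose at most rate $1$.
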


\begin{proof}
It suffices to show that $|(W_\alpha)'_x,y'_x|<\pi/2-c(\varepsilon)$ for some $\alpha$ ,where $(W_\alpha)'_x$ denotes the set of all directions of shortest paths from $x$ to $w_\alpha$.
Let $a=c(\varepsilon)$ and $b=c(\varepsilon)$ be constants such that $a\ll b\ll L$.
Suppose $|(W_\alpha)'_x,y'_x|\ge\pi/2-a$ for all $\alpha$.
Let $\mathcal A_1$ (resp.\ $\mathcal A_2$) be the set of indices $\alpha$ such that $|(W_\alpha)'_x,y'_x|\le\pi/2+a$ (resp.\ $>\pi/2+a$).
Set $W'(x):=\bigcup_{\alpha\in\mathcal A_2}(W_\alpha)'_x$, $A_i'(x):=\bigcup_{\gamma\in\Gamma_i(p)}(A_{i\gamma})'_x$.
Note that $|A_i'(x),y'_x|>\pi/2-\varkappa(\delta)$ by Remark \ref{rem:open}(2).
Then, applying Lemma \ref{lem:dir}(1) to $W'(x)$, $y'_x$, $A_1'(x)$, $\dots$, $A_k'(x)$, we get a direction $\xi\in\Sigma_x$ such that
\[|W'(x)\xi|>\pi/2+c(a),\quad|A_i'(x)\xi|=\pi/2\]
for all $i$.
Let $\mathcal A_3$ be the set of indices $\alpha\in\mathcal A_2$ such that $|(W_\alpha)'_x\xi|>\pi/2+b$.
Note that $\#\mathcal A_1\le Ca/\omega^{n-1}$ and $\#\mathcal A_3\ge(L-Ca-Cb)/\omega^{n-1}$ (see Lemma \ref{lem:vol}).
Choose $x_1$ near $x$ such that $(x_1)'_x$ is close to $\xi$.
Then, we have
\begin{align*}
\sigma(x_1)&\ge\sigma(x)+N^{-1}(\#\mathcal A_3\cdot c(b)-\#\mathcal A_1)|xx_1|\\
&\ge\sigma(x)+N^{-1}\omega^{-(n-1)}((L-Ca-Cb)\cdot c(b)-Ca)|xx_1|\\
&\ge\sigma(x)+c(\varepsilon)|xx_1|
\end{align*}
since $a\ll b\ll L$ and $N\le C/\omega^{n-1}$.
Furthermore, by Remark \ref{rem:open}(1), we have $|f(x_1)f(x)|<\varkappa(\delta)|xx_1|$.
Thus, by the $c(\varepsilon)$-openness of $f$, we can find $x_2\in f^{-1}(f(x))$ near $x$ such that $c(\varepsilon)|x_1x_2|\le\varkappa(\delta)|xx_1|$ (cf.\ \cite[2.1.3]{P1}).
Therefore, we have
\[\sigma(x_2)\ge\sigma(x_1)-|x_1x_2|\ge\sigma(x)+c(\varepsilon)|xx_1|-c(\varepsilon)^{-1}\varkappa(\delta)|xx_1|>\sigma(x).\]
This contradicts the local maximality of $\sigma|_{f^{-1}(f(x))}$ at $x$.
\end{proof}

Let $f:U\to\mathbb R^k$ be $(\varepsilon,\delta)$-noncritical in the generalized sense at $p\in U$.
We say that $f$ is \textit{$(\varepsilon,\delta)$-complementable (in the generalized sense)} at $p$ if there exists a function $f_{k+1}$ defined on a neighborhood of $p$ such that $(f,f_{k+1})$ is $(\varepsilon,\delta)$-noncritical in the generalized sense at $p$.

The next proposition is the key to the proof of the fibration theorem.
For $v\in\mathbb R^k$ and $r>0$, we denote by $I^k(v,r)$ the closed $r$-neighborhood of $v$ in $\mathbb R^k$ with respect to the maximum norm (recall that we use the maximum norm in this section).

\begin{prop}[cf.\ {\cite[3.5]{P1}}]\label{prop:per1}
Let $f:U\to\mathbb R^k$ be $(\varepsilon,\delta)$-noncritical in the generalized sense at $p\in U$ and assume $\vol_{n-1}\Sigma_p\ge\varepsilon$.
Suppose that $f$ is not $(c(\varepsilon),\varkappa(\delta))$-complementable at $p$.
Then, for sufficiently small $R>0$, there exists a function $h$ defined on $U_1=f^{-1}(I^k(f(p),\delta^5R))\cap\bar B(p,R)$ such that
\begin{enumerate}
\item $h(U_1)=[0,R]$ and $h(x)=|px|$ if $|px|>R/2$;
\item $f$ is injective on $S=h^{-1}(0)$;
\item $f$ is $(c(\varepsilon),\varkappa(\delta))$-complementable at any point of $U_1\setminus S$;
\item $(f,h)$ is $(c(\varepsilon,\varkappa(\delta))$-noncritical in the generalized sense at any point $x\in U_1$ such that $|f(x)f(S)|<\frac13\delta^5h(x)$.
\end{enumerate}
\end{prop}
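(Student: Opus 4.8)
The plan is to follow Perelman's construction of the function $h$ in \cite[3.5]{P1} essentially verbatim, carefully tracking that every use of the definition of noncriticality has been covered by Proposition \ref{prop:open} and Remark \ref{rem:open}. First I would set up the key dichotomy: since $f$ is not $(c(\varepsilon),\varkappa(\delta))$-complementable at $p$, for every $q$ near $p$ with $|f(q)f(p)|$ small the map $(f,|q\cdot|)$ fails to be noncritical at nearby points unless the alternative in Lemma \ref{lem:per1} holds; combined with Lemma \ref{lem:per2}, this forces a gradient-like behaviour of the averaged function $\sigma$ from Setting \ref{set:per} along the level set $f^{-1}(f(p))$. Concretely, I would first show (as in \cite[3.5]{P1}) that on a small ball $\bar B(p,R)$ the function $\sigma$ restricted to each fiber $f^{-1}(v)$, $v\in I^k(f(p),\delta^5R)$, has no local maximum in the interior except possibly at a single point where $f$ becomes injective: if $x$ were such a local maximum, Lemma \ref{lem:per2} would produce $w_\alpha$ making $(f,|x\cdot|)$ noncritical at all nearby $y$ in the fiber (using also that $x$ is a local max), contradicting non-complementability unless the fiber of $f$ through $x$ degenerates to a point near $x$.

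The second step is to use this to build the ``distance-like'' function $h$. Following Perelman, for each $v \in I^k(f(p),\delta^5 R)$ one considers the $\sigma$-gradient flow (or its discrete broken-geodesic substitute, as in the proof of Proposition \ref{prop:conn}) within the fiber $f^{-1}(v)$, starting from the sphere $|px| = R$ where $h$ is prescribed to equal $|px|$, and flowing toward the unique maximum $x_v$ of $\sigma|_{f^{-1}(v)}$; the value $h(x)$ is defined as $R$ minus the $\sigma$-length parameter needed to flow $x$ out to $|px|=R$, rescaled so that $h$ attains the full interval $[0,R]$ and $h(x)=|px|$ for $|px|>R/2$. The set $S=h^{-1}(0)$ is then the union of the points $x_v$; part (2) follows because at each such point $f$ is injective locally (no complementability forces the fiber to be $0$-dimensional there, just as in the $k=n$ case of Proposition \ref{prop:open}), and a global injectivity argument on $S$ using Lemma \ref{lem:cnt} finishes it. For (3), any $x\in U_1\setminus S$ has $h(x)>0$, so the $\sigma$-flow direction at $x$ within the fiber is genuinely transversal, and the shortest direction realizing the $\sigma$-increase supplies the extra coordinate $f_{k+1}:=$ (a suitable distance or $\sigma$-type function) making $(f,f_{k+1})$ noncritical; one checks the four conditions of Definition \ref{dfn:nc} using Remark \ref{rem:open} and Lemma \ref{lem:dir}. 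For (4), when $|f(x)f(S)|<\tfrac13\delta^5 h(x)$ one is far enough from $S$ that $h$ itself is the needed complementing function: its sublevel/superlevel behaviour is controlled by $\sigma$, and the angle conditions $\tilde\angle A_{i\gamma}x(\text{level set of }h)$ come out $>\pi/2-\varkappa(\delta)$ while a $w_\alpha$ (or a point on the flow line) serves as $w(x)$, exactly as in Setting \ref{set:per}.

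The main obstacle I expect is part (4): one must produce, for $h$ playing the role of $f_{k+1}$, the precise data $g_{k+1}=\inf_\gamma g_{(k+1)\gamma}$ with the structure demanded in Definition \ref{dfn:nc}(1) — i.e. express the level-set distance controlling $h$ as an infimum of functions of the form $\varphi(|A\cdot|)+\sum_{l\le k}\varphi^l(f_l(\cdot))$ — together with the inequality \eqref{eq:nc} relating $h$ to this $g_{k+1}$. In Perelman's original argument $h$ is built directly from distance functions so $h\equiv g_{k+1}$; in our generalized setting the $f_l$ are only close to distance-type $g_l$, so one has to carry the $\varkappa(\delta)$-errors through the flow construction and verify they do not accumulate beyond $\varkappa(\delta)|xy|$ — this is where the quantitative bookkeeping (choice of the scales $\delta^5 R$, $\tfrac13\delta^5 h(x)$, and the constants in Setting \ref{set:per}) has to be done with care. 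Once (4) is in place, (1)--(3) are comparatively routine adaptations of \cite[3.5]{P1}, and I would present them by pointing to the corresponding steps there and indicating only the substitutions (Proposition \ref{prop:open}, Remark \ref{rem:open}, Lemmas \ref{lem:per1}--\ref{lem:per2}) that replace appeals to the original definition of noncriticality.
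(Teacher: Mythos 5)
There is a genuine gap, and it sits exactly where you yourself flag ``the main obstacle'': your construction of $h$ is not Perelman's and does not deliver the structure that conclusion (4) requires. In the paper (following \cite[3.5]{P1} verbatim), $h$ is \emph{not} obtained from a $\sigma$-gradient flow inside the fibers; it is defined explicitly as $h=\inf_\gamma h_\gamma$ over a nested hierarchy of finite nets $S_j\subset S$, with
$h_\gamma(x)=\varphi_{\delta^{j+1}R}(|p_\gamma x|)+\sum_{l=1}^k\frac{10}{M_1}|f_l(x)-f_l(p_\gamma)|$
(and a modified formula for $p_\gamma=p$). The whole point of the generalized Definition \ref{dfn:nc} --- and the reason the paper insists that each $g_i$ be built from the actual components $f_l$ rather than from the $g_l$ --- is that this formula, written with the $f_l$ of the generalized map, already has \emph{exactly} the form demanded in Definition \ref{dfn:nc}(1), so one can take $g_{k+1}:=h$ and the inequality \eqref{eq:nc} for the new coordinate holds with zero error; no $\varkappa(\delta)$-errors need to be ``carried through a flow,'' because there is no flow. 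With your flow-time definition of $h$ there is no visible way to exhibit a $g_{k+1}$ of the prescribed form $\inf_\gamma\bigl(\varphi_\gamma(|A_\gamma\cdot|)+\sum_l\varphi_\gamma^l(f_l)\bigr)$ within $\delta|xy|$, nor to get the boundary normalization $h(x)=|px|$ for $|px|>R/2$ in (1), and you do not supply these; so (4) remains unproved in your outline. The remaining verification of (4) in the paper then combines Lemma \ref{lem:per3} (to find $s\in S$ with $|f(x)f(s)|<\delta^2|xs|$, $|p_\gamma s|<\delta|xs|$), Lemma \ref{lem:per2} (to produce $w_\alpha$ with $\tilde\angle w_\alpha xs>\pi/2+c(\varepsilon)$) and Remark \ref{rem:open}(2) for the angle conditions against the $A_{i\beta}$.

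Two further points of your sketch are off. First, $S$ is not ``the union of the fiberwise maxima $x_v$ of $\sigma$'' (that is the device of Proposition \ref{prop:per2}); here $S$ is defined directly as the set of $x\in U_1$ such that $\sigma(x)-\sigma(y)\ge M|xy|$ whenever $|f(x)f(y)|<\delta|xy|$, and then (3) is immediate from the dichotomy of Lemma \ref{lem:per1}, while $p\in S$ by the non-complementability hypothesis. Second, injectivity of $f$ on $S$ does not come from any ``$0$-dimensionality of the fiber'' (the fibers need not be points); it comes from the co-Lipschitz estimate $|f(x)f(y)|\ge M_1|xy|$ for $x,y\in S$, proved by a short contradiction using the definition of $S$ and the $c(\varepsilon)$-openness of $f$; this estimate is also what makes Lemma \ref{lem:per3}, and hence (4), work. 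So while your overall intention (redo \cite[3.5]{P1} using Proposition \ref{prop:open}, Remark \ref{rem:open} and Lemmas \ref{lem:per1}--\ref{lem:per2}) is the right one, the substituted flow construction of $h$ and the mischaracterization of $S$ leave conclusions (1), (2) and especially (4) without proof; the missing idea is precisely Perelman's explicit net-based formula for $h$, which the generalized definition was tailored to accommodate.
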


\begin{proof}
The proof is completely the same as the original one.
Let $R>0$ be so small that $U_1$ is contained in the neighborhood $V$ of Setting \ref{set:per}.
Let $M=c(\varepsilon)$ be the constant of Lemma \ref{lem:per1}(2) and define a compact set $S$ by
\begin{equation*}
S:=\left\{x\in U_1\;\middle|\;
\begin{gathered}
\sigma(x)-\sigma(y)\ge M|xy|\ \text{for all}\ y\in U_1\\
\text{satisfying}\ |f(x)f(y)|<\delta|xy|
\end{gathered}
\right\}.
\end{equation*}
Then, $f$ is $(c(\varepsilon),\varkappa(\delta))$-complementable at any point of $U_1\setminus S$ by Lemma \ref{lem:per1}. In particular, $p\in S$ by the assumption.
Furthermore, the $c(\varepsilon)$-openness of $f$ implies that $|f(x)f(y)|\ge M_1|xy|$ for any $x,y\in S$, where $M_1=c(\varepsilon)$.
(Indeed, assume $f$ is $M_2$-open, where $M_2=c(\varepsilon)$, and let $M_1\ll MM_2$.
Suppose that there exist $x,y\in S$ such that $|f(x)f(y)|<M_1|xy|$.
Then, we can find $x_1\in f^{-1}(f(x))$ such that $|x_1y|\ll M|xy|$.
Together with the definition of $S$, this implies $\sigma(x)>\sigma(y)$.
Similarly $\sigma(x)<\sigma(y)$, a contradiction.)
In particular, $f$ is injective on $S$ and $S\subset\bar B(p,M_1^{-1}\delta^5 R)$.

Let us define the function $h$.
Choose a sequence $S_j$ of finite subsets of $S$ such that
\begin{itemize}
\item $S_0:=\{p\}$;
\item $S_j\supset S_{j-1}$ and $f(S_j)$ is a maximal $\delta^{j+5}R$-discrete net in $f(S)$.
\end{itemize}
Then, we define $h(x):=\inf\{h_\gamma(x)\mid p_\gamma\in\bigcup_jS_j\}$ by
\[h_\gamma(x):=\varphi_{\delta^{j+1}R}(|p_\gamma x|)+\sum_{l=1}^k\frac{10}{M_1}|f_l(x)-f_l(p_\gamma)|\]
for $p_\gamma\in S_j\setminus S_{j-1}$ ($j\ge1$) and
\[h_\gamma(x):=\min\left\{\varphi_{\delta R}(|px|)+\sum_{l=1}^k\frac{10}{M_1}|f_l(x)-f_l(p)|,\ \frac12\varphi_{\frac R2}(|px|)+\frac R4\right\}\]
for $p_\gamma=p$, where
\begin{equation*}
\varphi_r(a):=
\begin{cases}
\hfil a, & a\le r\\
2a-r, & a\ge r.
\end{cases}
\end{equation*}
Note that the above definition of $h$ is exactly the same as in the original proof and that it has the same form as the function $g_i$ in Definition \ref{dfn:nc}(1).

Then, it easily follows that $h^{-1}(0)=S$, $h(U_1)=[0,R]$ and $h(x)=|px|$ if $|px|>R/2$.
To verify the conclusion (4), we need the following lemma.

\begin{lem}[{\cite[3.5 Assertion 3]{P1}}]\label{lem:per3}
For $x\in U_1\setminus S$, set $\Gamma(x):=\{\gamma\mid h_\gamma(x)=h(x)\}$.
Suppose $|f(x)f(S)|<\frac13\delta^5h(x)$.
Then, there exists $s\in S$ such that
\[|f(x)f(s)|<\delta^2|xs|,\quad |p_\gamma s|<\delta|xs|\]
for all $\gamma\in\Gamma(x)$.
Moreover, we have $\#\Gamma(x)<c^{-1}(\varepsilon)$.
\end{lem}

We omit the proof since it is based only on the definition of $h$ and the $M_1$-co-Lipschitzness of $f$ on $S$, and does not depend on the definition of noncriticality (see the original proof).

Let us show the conclusion (4).
Let $x\in U_1\setminus S$ and $s\in S$ be as above. 
Then, by Lemma \ref{lem:per2}, there exists $\alpha$ such that $\tilde\angle w_\alpha xs>\pi/2+c(\varepsilon)$.
Together with the second inequality in the above lemma, this implies $\tilde\angle w_\alpha xp_\gamma>\pi/2+c(\varepsilon)$ for all $\gamma\in\Gamma(x)$.
Furthermore, the two inequalities in the above lemma together with Remark \ref{rem:open}(2) imply that $\tilde\angle A_{i\beta}xp_\gamma>\pi/2-\varkappa(\delta)$ for all $\beta\in\Gamma_i(x)$ and $\gamma\in\Gamma(x)$.
Thus, all the conditions of Definition \ref{dfn:nc} are satisfied for $(f,h)$ at $x$.
\end{proof}

The last proposition is a refinement of the previous one on each fiber.

\begin{prop}[cf.\ {\cite[3.6]{P1}}]\label{prop:per2}
Let $f:U\to\mathbb R^k$ be $(\varepsilon,\delta)$-noncritical in the generalized sense at $p\in U$ and assume $\vol_{n-1}\Sigma_p\ge\varepsilon$.
Let $R>0$ be so small that $U_1=f^{-1}(I^k(f(p),\delta^5R))\cap\bar B(p,R)$ is contained in the neighborhood $V$ of Setting \ref{set:per}.
Suppose that 
\begin{itemize}
\item for any $x\in U_1$ such that $\delta R\le|px|\le R$, we have $\sigma(p)-\sigma(x)\ge M|px|$,
\end{itemize}
where $M=c(\varepsilon)$ is the constant of Lemma \ref{lem:per1}(2) (in particular, this is weaker than the condition that $f$ is not $(c(\varepsilon),\varkappa(\delta))$-complementable at $p$).
Then, for any $v\in I^k(f(p),\delta^5R)$, there exists a function $h_v:U_1\to[0,R]$ and a point $o_v\in U_1\cap f^{-1}(v)$ such that
\begin{enumerate}
\item for $x\in U_1\cap f^{-1}(v)$, we have $h_v(x)=R\Leftrightarrow|px|=R$ and $h_v(x)=0\Leftrightarrow x=o_v$;
\item $(f,h_v)$ is $(c(\varepsilon),\varkappa(\delta))$-noncritical in the generalized sense on $U_1\cap f^{-1}(v)\setminus\{o_v\}$.
\end{enumerate}
\end{prop}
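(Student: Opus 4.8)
The strategy is to follow Perelman's proof of \cite[3.6]{P1}, using Proposition \ref{prop:per1} and Lemmas \ref{lem:per1}, \ref{lem:per2}. Fix $v\in I^k(f(p),\delta^5R)$ and write $F_v:=U_1\cap f^{-1}(v)=f^{-1}(v)\cap\bar B(p,R)$; this is compact and, by hypothesis, contained in the neighborhood $V$ from Setting \ref{set:per}. The plan is to take $o_v$ to be a maximum point of $\sigma|_{F_v}$ and to build $h_v$ as a distance-like function centered at $o_v$ near $o_v$ and agreeing with $|p\cdot|$ near $\partial\bar B(p,R)$. First I would check that $|po_v|<\delta R$. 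By the $c(\varepsilon)$-openness of $f$ (Proposition \ref{prop:open}) there is $y\in f^{-1}(v)$ with $c(\varepsilon)|py|\le|f(p)v|\le\delta^5R$, so $y\in F_v$ and $\sigma(y)\ge\sigma(p)-c(\varepsilon)^{-1}\delta^5R$ since $\sigma$ is $1$-Lipschitz (being an average of distance functions); whereas the standing hypothesis gives $\sigma(x)\le\sigma(p)-M\delta R$ for every $x\in F_v$ with $\delta R\le|px|\le R$. As $\delta\ll c(\varepsilon)$ forces $c(\varepsilon)^{-1}\delta^5R<M\delta R$, the maximum of $\sigma|_{F_v}$ is not attained in that annulus, so $|po_v|<\delta R$.

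Next I would establish two noncriticality facts. For $x\in F_v$ with $x\ne o_v$ we have $|f(x)f(o_v)|=0<\delta|xo_v|$, so Lemma \ref{lem:per1} applies to the pair $(o_v,x)$: either $(f,|o_v\cdot|)$ is $(c(\varepsilon),\varkappa(\delta))$-noncritical in the generalized sense at $x$, or $\sigma(x)-\sigma(o_v)>c(\varepsilon)|o_vx|>0$, which is impossible since $o_v$ maximizes $\sigma|_{F_v}$. Hence $(f,|o_v\cdot|)$ is $(c(\varepsilon),\varkappa(\delta))$-noncritical on $F_v\setminus\{o_v\}$. Similarly, for $x\in F_v$ with $|px|>R/2$ we have $|f(p)v|\le\delta^5R<\delta|px|$, so Lemma \ref{lem:per1} applies to $(p,x)$; since $\sigma(p)-\sigma(x)\ge M|px|>0$, its alternative (2) fails and $(f,|p\cdot|)$ is $(c(\varepsilon),\varkappa(\delta))$-noncritical at $x$.

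Then I would define $h_v:U_1\to[0,R]$ by the same interpolation scheme as the function $h$ in the proof of Proposition \ref{prop:per1}, but based at $o_v$ rather than $p$ in the inner part: take finite nets at geometrically decreasing scales (containing $o_v$), put $h_{v,\gamma}(x):=\varphi_{r_\gamma}(|p_\gamma x|)+\tfrac{10}{M_1}\sum_{l=1}^k|f_l(x)-f_l(p_\gamma)|$ for the inner centers $p_\gamma$, keep the outermost term $\tfrac12\varphi_{R/2}(|px|)+\tfrac{R}{4}$ (so that $h_v=|p\cdot|$ once $|px|\ge R/2$), and set $h_v:=\inf_\gamma h_{v,\gamma}$; here $M_1=c(\varepsilon)$ is the co-Lipschitz constant for $f$ along fibers from the proof of Proposition \ref{prop:per1}. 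The identities $h_v^{-1}(0)\cap F_v=\{o_v\}$ and $h_v(x)=R\Leftrightarrow|px|=R$ for $x\in F_v$ then follow directly from the definition exactly as in Proposition \ref{prop:per1}, using $|po_v|<\delta R\ll R/2$. For (2), the $(c(\varepsilon),\varkappa(\delta))$-noncriticality of $(f,h_v)$ on $F_v\setminus\{o_v\}$ reduces, word for word as in the last part of the proof of Proposition \ref{prop:per1} (the analog of Lemma \ref{lem:per3}, then Lemma \ref{lem:per2}), to producing for each relevant $x$ a point $w_\alpha$ with $\tilde\angle w_\alpha xo_v>\pi/2+c(\varepsilon)$; this is precisely what Lemma \ref{lem:per2} yields once we use that $o_v$ maximizes $\sigma|_{F_v}$ together with the angle bounds from Definition \ref{dfn:nc} and Remark \ref{rem:open}(2).

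The main obstacle is the bookkeeping in this last step: one must check all four conditions of Definition \ref{dfn:nc} for $(f,h_v)$ simultaneously across all scales and for every $v$ in the cube while the center $o_v$ moves with $v$, and in particular verify the clean matching $h_v(x)=R\Leftrightarrow|px|=R$ where the inner part is built from $o_v\ne p$ --- this demands the same carefully calibrated choices of the thresholds ($\delta^5R$, the scales $r_\gamma$, the coefficient $10/M_1$) as in Perelman's original argument. Everything else is a direct application of Proposition \ref{prop:open}, Lemmas \ref{lem:per1}--\ref{lem:per3}, and Remark \ref{rem:open}.
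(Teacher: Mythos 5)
Your preparatory steps coincide with the paper's proof: you take $o_v$ to be a maximum point of $\sigma$ on $U_1\cap f^{-1}(v)$, show $|po_v|<\delta R$ by playing the standing hypothesis against the $c(\varepsilon)$-openness of $f$, and derive the noncriticality of $(f,|o_v\cdot|)$ on the fiber minus $o_v$ and of $(f,|p\cdot|)$ away from $p$ from Lemma \ref{lem:per1} (the paper cites Lemma \ref{lem:per2} together with Remark \ref{rem:open}(2) for the first fact, but your use of the dichotomy of Lemma \ref{lem:per1}, whose second alternative is excluded by the maximality of $\sigma$ at $o_v$, is equally valid and arguably cleaner). Where the proposal falls short is the construction of $h_v$ itself: you import the multi-scale net scheme from the proof of Proposition \ref{prop:per1}, but that scheme has no counterpart here --- there is no analogue of the set $S$ to take nets in (you never say what your nets are nets of, nor where the centers $p_\gamma$ live), and you explicitly defer the verification of (1) and (2) for this construction to unspecified ``bookkeeping.'' So as written $h_v$ is not actually defined and neither conclusion is verified.

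The paper avoids all of this with an explicit two-term minimum: $h_v(x):=\min\bigl\{\varphi_{\delta R}(|o_vx|),\ \frac12\varphi_{R/2}(|px|)+\frac R4\bigr\}$, $\varphi_r(a)=\max\{a,2a-r\}$. For this $h_v$, (1) is immediate from $|po_v|<\delta R$, and (2) follows from exactly the two noncriticality facts you established once one notes which term can be active where: if $|px|<\delta R$ then $\varphi_{\delta R}(|o_vx|)<\frac12\varphi_{R/2}(|px|)+\frac R4$ (because $|po_v|<\delta R$), so only the $o_v$-term contributes there, while wherever the $p$-term is active one has $|px|\ge\delta R$, which is precisely the range covered by the standing hypothesis. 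This exposes a second slip in your write-up: you prove noncriticality of $(f,|p\cdot|)$ only for $|px|>R/2$, but the outer term $\frac12\varphi_{R/2}(|px|)+\frac R4$ already attains the minimum for $|px|$ on the order of $R/6$, so your coverage does not match your own gluing; the fix is simply to run the same Lemma \ref{lem:per1} argument for all $|px|\ge\delta R$, as the hypothesis permits and as the paper does. With the explicit two-term $h_v$ and the $\delta R$ threshold, your argument becomes the paper's proof; without them, the key object of the proposition has not been produced.
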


\begin{proof}
The proof is completely the same as the original one.
Let $o_v$ be a maximum point of $\sigma|_{U_1\cap f^{-1}(v)}$.
Then, by the assumption and the $c(\varepsilon)$-openness of $f$, we have $|po_v|<\delta R$.
(Indeed, suppose the contrary; then we have $\sigma(p)\ge\sigma(o_v)+M|po_v|$.
On the other hand, we can find $q\in f^{-1}(v)$ such that $|pq|\ll |po_v|$.
Thus, we have $\sigma(q)>\sigma(o_v)$, a contradiction.)
Define
\[h_v(x):=\min\left\{\varphi_{\delta R}(|o_vx|),\ \frac12\varphi_{\frac R2}(|px|)+\frac R4\right\},\]
where $\varphi_r(a)=\max\{a,2a-r\}$ as before.
Then, (1) is clear.
Let us show (2).
Let $x\in U_1\cap f^{-1}(v)\setminus\{o_v\}$.
Then, $(f,|o_v\cdot|)$ is $(c(\varepsilon),\varkappa(\delta))$-noncritical at $x$ by Lemma \ref{lem:per2} and Remark \ref{rem:open}(2).
Similarly, $(f,|p\cdot|)$ is $(c(\varepsilon),\varkappa(\delta))$-noncritical at $x$ if $|px|\ge\delta R$ by the assumption and Lemma \ref{lem:per1}.
On the other hand, $\varphi_{\delta R}(|o_vx|)<\frac12\varphi_{\frac R2}(|px|)+\frac R4$ if $|px|<\delta R$ since $|po_v|<\delta R$.
Thus, $(f,h_v)$ is  $(c(\varepsilon),\varkappa(\delta))$-noncritical at $x$.
\end{proof}

We have now shown all the propositions in \cite[\S3]{P1} for generalized noncritical maps.
Finally, we define the term ``noncritical (in the generalized sense)" (not ``$(\varepsilon,\delta)$-noncritical'') as follows:

\begin{dfn}[cf.\ {\cite[3.7]{P1}}]\label{dfn:ncv}
Let $U$ be a domain in an $n$-dimensional Alexandrov space $M$ such that $\varepsilon_0=\inf_{p\in U}\vol_{n-1}\Sigma_p>0$.
A map $f:U\to\mathbb R^k$ ($0\le k\le n+1$) is said to be \textit{noncritical (in the generalized sense)} at $p$ if it is $(\varepsilon,\delta)$-noncritical in the generalized sense at $p$ for some $\varepsilon<\varepsilon_0$ and $\delta<\Delta_{n,k}(\varepsilon)$, where $\Delta_{n,k}(\varepsilon)$ is defined by reverse induction on $k$ in such a way that $(\varepsilon,\delta)$-noncritical maps $f:U\to\mathbb R^k$ with $\varepsilon<\varepsilon_0$ and $\delta<\Delta_{n,k}(\varepsilon)$ satisfy all the propositions above (including lemmas) and the pairs $(c(\varepsilon),\varkappa(\delta))$ appearing in them satisfy $\varkappa(\delta)<\Delta_{n,k+1}(c(\varepsilon))$.
\end{dfn}

Then, all the properties of noncritical maps listed in \cite[1.3]{P1} are also true for this definition.
As pointed out at the beginning of this discussion, the remaining topological part \cite[1.4--1.5]{P1} of the proof of the fibration theorem is based only on those properties.
Thus, the generalized fibration theorem follows in exactly the same way.
This completes the proof of Theorem \ref{thm:per}.

\begin{proof}[Proof of Theorem \ref{thm:main} and Corollary \ref{cor:main}]
Theorem \ref{thm:main} follows from Theorem \ref{thm:con} and Theorem \ref{thm:per}.
Indeed, the map $\varphi\circ f$ in Theorem \ref{thm:con} is $(c,\varkappa(\delta))$-noncritical in the generalized sense on $B(\hat p,\ell\delta^2)$ (we can find $w$ of Definition \ref{dfn:nc}(4) by Proposition \ref{prop:str}).
Note that $(\varphi\circ f)^{-1}(B(\varphi(p),\ell\delta^2/2))$ has a compact closure in $B(\hat p,\ell\delta^2)$, since $f$ is $C\mu$-close to the $\mu$-approximation $g$, where $\mu\ll\ell\delta^2$ (we use the standard Euclidean metric here).
Therefore, by Theorem \ref{thm:per}, it is homeomorphic to the product $f^{-1}(p)\times B(\varphi(p),\ell\delta^2/2)$, where the second component is given by $\varphi\circ f$.
Moreover, Corollary \ref{cor:main} follows from Lemma \ref{lem:susp} and Remark \ref{rem:susp}.
Indeed,  if $k=n-1$, then $\vol_{n-1}\Sigma_p>c$ for any $p\in M$.
It easily follows from Proposition \ref{prop:conn} and the topological part \cite[1.4--1.5]{P1} of the proof of the fibration theorem that the fiber is homeomorphic to a circle or a closed interval.
\end{proof}

\begin{prob}\label{prob:per}
Is it possible to prove the fibration theorem for noncritical maps without the assumption on the volume of spaces of directions?
If possible, then probably the map $f$ of Theorem \ref{thm:con} is a locally trivial fibration.
\end{prob}

\begin{rem}\label{rem:mor}
A similar modification does not work for Perelman's another proof of the fibration theorem in \cite{P2}, which requires no assumptions on the volume of spaces of directions.
This is because the regularity of maps defined in \cite{P2} is stronger than the above noncriticality in that it does not include an error $\delta$ as in Definition \ref{dfn:nc}.
For example, a map $f=(|A_1\cdot|,\dots,|A_k\cdot|):M\to\mathbb R^k$, where $A_i$ are compact subsets of $M$, is called \textit{$\varepsilon$-regular} at $p\in M$ if it satisfies
\begin{enumerate}
\item $\angle((A_i)'_p,(A_j)'_p)>\pi/2+\varepsilon$ for all $i\neq j$;
\item there exists a direction $\xi\in\Sigma_p$ such that $\angle((A_i)'_p,\xi)>\pi/2+\varepsilon$ for all $i$.
\end{enumerate}
In addition, $f$ is simply called \textit{regular} at $p$ if it is $\varepsilon$-regular at $p$ for some $\varepsilon>0$.
Then, the fibration theorem in \cite{P2} states that every proper regular map defined on a domain of an Alexandrov space is a locally trivial fibration.
However, we cannot weaken the condition (1) to $\angle((A_i)'_p,(A_j)'_p)>\pi/2-\delta$ for any $\delta\ll\varepsilon$.
The reason is as follows.
There is a counterpart \cite[1.3]{P2} of Propositions \ref{prop:per1}, \ref{prop:per2} asserting that if $g:M\to\mathbb R^k$ is regular and incomplementable at $p$, then there exists a nonpositive function $g_{k+1}$ defined on a neighborhood of $p$ such that $(g,g_{k+1})$ is regular on the complement of $g_{k+1}^{-1}(0)$.
Nevertheless, unlike Propositions \ref{prop:per1}, \ref{prop:per2}, even if the given map $g$ is $\varepsilon$-regular at $p$, the new map $(g,g_{k+1})$ is not uniformly $c(\varepsilon)$-regular.
Thus, if we modify the condition (1) as above, the given error $\delta$ may be too big compared with the regularity of the new map $(g,g_{k+1})$.
The same problem occurs if we try to generalize the definition of regularity by using inequality \eqref{eq:con} like Definition \ref{dfn:nc}.
\end{rem}

We conclude this section with the following stability theorem for fibrations.
Note that the stability theorem \cite[4.3]{P1} for framed subsets can be generalized like the fibration theorem just by replacing ``noncritical map'' with ``noncritical map in the generalized sense.''
Indeed, the proof of the stability theorem is based only on the properties of noncritical maps established in \cite[\S3]{P1} like the topological part of the proof of the fibration theorem.

\begin{cor}\label{cor:stab}
Let $X$, $M$ be Alexandrov spaces as in Theorem \ref{thm:main} and suppose in addition that they are compact.
Let $\tilde M$ be an $n$-dimensional Alexandrov space sufficiently close to $M$ (in particular, $\tilde M$ also satisfies the assumption of Theorem \ref{thm:main}; see \cite[7.14]{BGP}).
Let $f:M\to X$ and $\tilde f:\tilde M\to X$ be the fibrations constructed in Theorem \ref{thm:con} (where the approximation $\tilde g:\tilde M\to X$ is assumed to be obtained from the approximations $g:M\to X$ and $\Psi:M\to\tilde M$).
Then, there exists a homeomorphism $\Phi:M\to\tilde M$ close to $\Psi$ respecting $f$, that is, $f=\tilde f\circ\Phi$.
\end{cor}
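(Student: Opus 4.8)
The plan is to deduce this from the generalized version of Perelman's stability theorem for framed subsets \cite[4.3]{P1}, whose validity was noted just before the statement: its proof uses only the properties of noncritical maps established in \cite[\S3]{P1}, here their generalized analogues Propositions~\ref{prop:open}--\ref{prop:per2}, so ``noncritical map'' may be replaced throughout by ``noncritical map in the generalized sense.'' The point to keep in mind is that $X$ is not globally Euclidean, so $f$ is not a single noncritical map into $\mathbb R^k$; one has to patch local chart data, which is exactly what the framed-subset formalism is designed for.

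First I would localize over $X$. Using compactness of $X$, cover it by finitely many open sets $U_1,\dots,U_m$, each contained in a ball $B(p_a,\ell\delta^2)$ on which a distance coordinate $\varphi_a\colon X\to\mathbb R^k$ associated with a $(k,\delta)$-strainer at $p_a$ is a chart (as in Theorem~\ref{thm:con}); after shrinking, $\{f^{-1}(U_a)\}_a$ and $\{\tilde f^{-1}(U_a)\}_a$ are finite covers of $M$ and $\tilde M$ of bounded multiplicity. By the proof of Theorem~\ref{thm:main}, $\varphi_a\circ f$ is $(c,\varkappa(\delta))$-noncritical in the generalized sense on $f^{-1}(U_a)$, the witnesses in Definition~\ref{dfn:nc} being the distance functions $|a_i^a\cdot|$ together with the inequality \eqref{eq:con}; since $\tilde M$ also satisfies the hypotheses of Theorem~\ref{thm:main} (\cite[7.14, 10.8]{BGP}), the same holds for $\varphi_a\circ\tilde f$ on $\tilde f^{-1}(U_a)$.

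Next I would check the closeness needed to compare the two framed spaces. Because $\tilde g$ is the natural approximation built from $g$ and $\Psi$, the maps $f$ and $\tilde f\circ\Psi$ are $\varkappa$-close, hence $\varphi_a\circ\tilde f\circ\Psi$ is $\varkappa(\delta)$-close to $\varphi_a\circ f$ on $f^{-1}(U_a)$; moreover the lifts of the strainer points $a_i^a$ to $\tilde M$ correspond under $\Psi$ to those in $M$, so the whole generalized-noncriticality data of $\varphi_a\circ\tilde f$ corresponds under $\Psi$ to that of $\varphi_a\circ f$ up to error $\varkappa(\delta)$. Thus $M$ and $\tilde M$, equipped with the framed structures whose frames consist, near each point, of the admissible functions $|A_{i\gamma}\cdot|$, the auxiliary functions $g_i$, and the components of the maps $\varphi_a\circ f$ (which satisfy \eqref{eq:nc})---and the corresponding data for $\tilde f$---are $\varkappa$-close framed subsets in the generalized sense.

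Finally, the generalized stability theorem for framed subsets provides a homeomorphism $\Phi\colon M\to\tilde M$ that is $\varkappa$-close to $\Psi$ and respects these frames; in particular $\varphi_a\circ f=(\varphi_a\circ\tilde f)\circ\Phi$ on $f^{-1}(U_a)$ for every $a$, and since the $U_a$ cover $X$ and each $\varphi_a$ is a chart, this is equivalent to $f=\tilde f\circ\Phi$. I expect the main obstacle to be the global bookkeeping: assembling the finitely many local charts $\varphi_a$ and their noncriticality witnesses into one coherent framed structure on $M$ (and on $\tilde M$), verifying that the combined maps stay noncritical in the generalized sense uniformly (in particular over points where the fiber is not a closed manifold, cf.\ Corollary~\ref{cor:main}), and confirming that Perelman's inductive construction over the strata in \cite[\S4]{P1} yields a single $\Phi$ simultaneously compatible with all the $\varphi_a\circ f$ on overlaps. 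As with the fibration theorem, this step carries over verbatim to the generalized setting because it rests only on the propositions of \cite[\S3]{P1}.
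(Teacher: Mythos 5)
Your overall strategy---reduce to the generalized stability theorem for framed subsets via the local charts $\varphi_a$ on $X$---is the same family of argument as the paper's, but there is a genuine gap at the globalization step, which is exactly the point where the paper does something different. You propose to assemble all the local data (the strainer functions $|a_i^a\cdot|$ and the components of all the maps $\varphi_a\circ f$) into \emph{one} framed structure on $M$ and apply the stability theorem once, concluding $\varphi_a\circ f=(\varphi_a\circ\tilde f)\circ\Phi$ for every $a$ simultaneously. This does not work as stated, for two reasons. First, the combined data of two overlapping charts is not noncritical in the generalized sense: on $f^{-1}(U_a)\cap f^{-1}(U_b)$ the strainer points of chart $a$ and chart $b$ span the same ``horizontal'' directions, so condition (3) of Definition \ref{dfn:nc} (mutual comparison angles $>\pi/2-\delta$ between the sets $A_{i\alpha}$ belonging to different components) fails for the juxtaposed map $(\varphi_a\circ f,\varphi_b\circ f)$; there is no single $(\varepsilon,\delta)$-noncritical map into $\mathbb R^{2k}$ encoding both charts. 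Second, the conclusion of the framed stability theorem is that $\Phi$ carries finitely many distinguished compact subsets to their counterparts; the identity $f=\tilde f\circ\Phi$ requires $\Phi$ to respect \emph{every} fiber, i.e.\ to commute with the maps, and that is not something you can extract from a single application of \cite[4.3]{P1} with a finite frame. Your closing remark that ``this step carries over verbatim to the generalized setting'' addresses the wrong issue: the missing argument is not the passage from noncritical to generalized noncritical, but the compatibility of the homeomorphism with all charts on overlaps, which you defer without proof.

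The paper resolves precisely this by working locally and then gluing: for each chart point $p$ it applies the generalized fibration theorem to get a product structure on $(\varphi\circ f)^{-1}(I^k(\varphi(p),\rho))$ with MCS fiber, applies the generalized stability theorem to obtain a \emph{local} homeomorphism $\Phi_p$ onto $(\varphi\circ\tilde f)^{-1}(I^k(\varphi(p),\rho))$ close to $\Psi$ and respecting $\varphi\circ f$, covers $M$ by finitely many such product neighborhoods, and then assembles the $\Phi_p$ into a single $\Phi$ with $f=\tilde f\circ\Phi$ by the gluing argument from the proof of the Complement to Theorem B in \cite[\S1]{P1}. To repair your proof you should replace the one-shot global framed application by this local-then-glue scheme (or supply an independent argument for the overlap compatibility, which is essentially reproving that gluing lemma).
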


\begin{proof}
We give only an outline.
Let $\varphi:X\to\mathbb R^k$ be the distance coordinate around $p\in X$ as in Theorem \ref{thm:con}.
Then, $\varphi\circ f$ and $\varphi\circ\tilde f$ are noncritical maps in the generalized sense on some neighborhoods of lifts of $p$ in $M$ and $\tilde M$, respectively.
By the generalized fibration theorem, $(\varphi\circ f)^{-1}(I^k(\varphi(p),\rho))$ is homeomorphic to $f^{-1}(p)\times I^k(\varphi(p),\rho)$ for sufficiently small $\rho>0$, where the second component is given by $\varphi\circ f$ and the fiber $f^{-1}(p)$ is an MCS-space in the sense of \cite[1.1]{P1}.
Furthermore, by the generalized stability theorem mentioned above, there exists a homeomorphism $\Phi_p:(\varphi\circ f)^{-1}(I^k(\varphi(p),\rho))\to(\varphi\circ\tilde f)^{-1}(I^k(\varphi(p),\rho))$ close to $\Psi$ respecting $\varphi\circ f$, provided that $\tilde M$ is sufficiently close to $M$.
Take a finite cover of $M$ by such product neighborhoods with respect to $f$.
Then, the desired homeomorphism is constructed by the same gluing argument as in the proof of Complement to Theorem B in \cite[\S1]{P1}.
\end{proof}

\section{Properties of the fibers}\label{sec:fbr}

In this section, we study the properties of the fibers of the map $f$ of Theorem \ref{thm:con}.
Note that the assumption on the volume of spaces of directions as in Theorem \ref{thm:main} is not required here.
We first remark that the diameters of the fibers of $f$ are very small:

\begin{rem}\label{rem:fbr}
Let $f:M\to X$ be the map of Theorem \ref{thm:con} and $p\in X$.
Since $f$ is $C\mu$-close to the $\mu$-approximation $g$, the fiber $f^{-1}(p)$ is contained in the $C\mu$-neighborhood of a lift $\hat p\in M$ of $p$.
Note that $\mu<c(\ell\delta^2)\ll\ell\delta^2$.
\end{rem}

In view of the inequality \eqref{eq:con}, we mainly deal with the following class of maps in this section:

\begin{dfn}\label{dfn:reg}
Let $M$ be an Alexandrov space and $p\in M$.
Let $\{(a_i,b_i)\}_{i=1}^k$ be a $(k,\delta)$-strainer at $p$ with length $>\ell$ and $g:M\to\mathbb R^k$ its distance coordinate.
Suppose a map $f:B(p,\ell)\to\mathbb R^k$ satisfies
\begin{equation}\label{eq:reg}
\bigl|(f(x)-f(y))-(g(x)-g(y))\bigr|<\delta|xy|
\end{equation}
for any $x,y\in B(p,\ell)$.
Then, we call $f$ a \textit{$\delta$-almost regular map} associated with the strainer $\{(a_i,b_i)\}_{i=1}^k$.
\end{dfn}

While the domain of $f$ may seem to be too large, it is useful for simplicity and is sufficient for our applications.

\begin{rem}\label{rem:reg}
The above definition of an almost regular map is different from that in \cite[11.7]{BGP} (the original definition generalizes the distance coordinate of a strainer from a different point of view).
However, as in the case of noncritical maps, all the claims about almost regular maps in \cite{BGP} hold for the above ones.
This is why we use the same term.
\end{rem}

\begin{rem}\label{rem:regcon}
Let $f:M\to X$ be the map of Theorem \ref{thm:con} and let $p$, $\hat p$, $\varphi$, $\hat \varphi$ be as in Theorem \ref{thm:con}.
Then, by the inequality \eqref{eq:con}, $\varphi\circ f$ is a $\varkappa(\delta)$-almost regular map on $B(\hat p,\ell\delta^2)$ associated with a $(k,\varkappa(\delta))$-strainer at $\hat p$ with length $>\ell\delta^2$.
\end{rem}

The following proposition is a generalization of Proposition \ref{prop:str}:

\begin{prop}\label{prop:reg}
Let $M$ be an $n$-dimensional Alexandrov space and $p\in M$.
Let $f:B(p,\ell)\to\mathbb R^k$ be a $\delta$-almost regular map associated with a $(k,\delta)$-strainer at $p$ with length $>\ell$.
Then,
\begin{enumerate}
\item if $k=n$, then $f$ is a $\varkappa(\delta)$-almost isometry from $B(p,\ell\delta)$ to an open subset of $\mathbb R^n$;
\item if $k<n$, then $f$ is a $(1+\varkappa(\delta))$-Lipschitz and $(1-\varkappa(\delta))$-open map on $B(p,\ell\delta)$.
\end{enumerate}
\end{prop}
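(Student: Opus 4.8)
The plan is to deduce both statements from Proposition \ref{prop:str} together with the defining inequality \eqref{eq:reg}, exploiting that $f$ differs from the distance coordinate $g$ of the strainer only by an error whose difference quotients are bounded by $\delta$; hence $f$ should inherit the conclusions of Proposition \ref{prop:str} with every constant worsened by at most $\delta$. First I would record the Lipschitz bound, which is immediate: for $x,y\in B(p,\ell\delta)$, \eqref{eq:reg} and Proposition \ref{prop:str} give $|f(x)f(y)|\le|g(x)g(y)|+\delta|xy|\le(1+\varkappa(\delta))|xy|$, so in particular $f$ is continuous. When $k=n$, Proposition \ref{prop:str}(1) additionally provides $|g(x)g(y)|\ge(1-\varkappa(\delta))|xy|$, so \eqref{eq:reg} yields $|f(x)f(y)|\ge(1-\varkappa(\delta)-\delta)|xy|=(1-\varkappa(\delta))|xy|$; thus $f$ is a $\varkappa(\delta)$-almost isometry onto its image, and once $f$ is known to be an open map its image is automatically open. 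So the only substantial point is the open-map property, which I would prove uniformly for all $k\le n$.

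To show $f$ is $(1-\varkappa(\delta))$-open on $B(p,\ell\delta)$ I would invoke Lemma \ref{lem:open}: it is enough to find, for each $x\in B(p,\ell\delta)$ and each $v\in\mathbb R^k$ with $v\ne f(x)$, a point $y$ arbitrarily close to $x$ with $|f(y)v|-|f(x)v|\le-(1-\varkappa(\delta))|xy|$. Put $u:=v-f(x)\ne0$. Since $\{(a_i,b_i)\}_i$ remains a $(k,\varkappa(\delta))$-strainer at every $x\in B(p,\ell\delta)$ (the comparison angles of a strainer of length $>\ell$ change by at most $\varkappa(\delta)$ when the base point moves by $\ell\delta$, cf.\ Lemma \ref{lem:str}), Lemma \ref{lem:susp} gives a $\varkappa(\delta)$-approximation $\Sigma_x\to S^k(\Sigma)=\Sigma*\mathbb S^{k-1}$ carrying $\{((A_i)'_x,(B_i)'_x)\}_i$ to an orthogonal $k$-frame of the equatorial $\mathbb S^{k-1}$. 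Identifying the ``$a$-halves'' $\xi_1,\dots,\xi_k$ with the standard basis of $\mathbb R^k$, let $\zeta\in\mathbb S^{k-1}$ correspond to $-u/|u|$, so $\cos|\zeta\xi_i|=-u_i/|u|$; pulling back there is $\bar\zeta\in\Sigma_x$ with $|\bar\zeta,(A_i)'_x|=\arccos(-u_i/|u|)\pm\varkappa(\delta)$ for every $i$. I would then take $y$ near $x$ with $y'_x$ within $\varkappa(\delta)$ of $\bar\zeta$ (geodesic directions are dense in $\Sigma_x$) and $|xy|<|u|$: by the first variation formula $|a_iy|-|a_ix|=-|xy|\cos|y'_x,(A_i)'_x|+o(|xy|)=(u_i/|u|)|xy|\pm\varkappa(\delta)|xy|$, hence by \eqref{eq:reg} $f_i(y)-f_i(x)=(u_i/|u|)|xy|\pm\varkappa(\delta)|xy|$ once $|xy|$ is small enough. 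Therefore $f(y)-v=-(1-|xy|/|u|)u+e$ with $|e|\le\varkappa(\delta)|xy|$, whence $|f(y)v|\le|u|-(1-\varkappa(\delta))|xy|=|f(x)v|-(1-\varkappa(\delta))|xy|$. Lemma \ref{lem:open} then gives that $f$ is $(1-\varkappa(\delta))$-open on $B(p,\ell\delta)$, completing (2) and, fed back into the first paragraph, also (1). (Alternatively the openness can be obtained by bootstrapping directly from Proposition \ref{prop:str}(2): given $x$ and $v$, set $v_0:=v-(f-g)(x)$, use the openness of $g$ to reach $x_1$ with $g(x_1)=v_0$, observe $|f(x_1)v|<\delta|xx_1|$ by \eqref{eq:reg}, and iterate; the increments decay geometrically and the limit $y$ satisfies $f(y)=v$ with $(1-\varkappa(\delta))|xy|\le|f(x)v|$.)

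I expect the delicate step to be the selection of the ``horizontal'' direction $\bar\zeta$, since one must prescribe simultaneously the angles from $\bar\zeta$ to all $k$ \emph{sets} $(A_i)'_x$ and then apply the first variation formula in its distance-to-set form; this is exactly where the suspension structure $S^k(\Sigma)$ supplied by Lemma \ref{lem:susp}, and in particular the orthogonality of the $k$-frame, is essential. The remaining work — absorbing the strainer defect, the approximation error, the $\delta$ of \eqref{eq:reg}, and the higher-order term into a single $\varkappa(\delta)$ — is routine bookkeeping.
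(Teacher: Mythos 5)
Your proof is correct and follows essentially the same line as the paper: everything is transferred from the distance coordinate $g$ to $f$ via the defining inequality \eqref{eq:reg}, with the Lipschitz and almost-isometry bounds immediate and the openness reduced to a pointwise descent criterion. The only real difference is that the paper simply quotes \cite[3.1]{F}, whose hypothesis is an incremental condition satisfied by $g$ on $B(p,\ell\delta)$ and hence, by \eqref{eq:reg}, also by $f$, whereas you verify the hypothesis of Lemma \ref{lem:open} directly by producing descent directions through Lemma \ref{lem:susp} and the first variation formula — a self-contained rendering of the same argument.
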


The proof is an easy application of the inequality \eqref{eq:reg} (the $(1-\varkappa(\delta))$-openness follows from \cite[3.1]{F} since the distance coordinate $g$ satisfies the assumption of \cite[3.1]{F} on $B(p,\ell\delta)$ and so does $f$).

In particular, by Remark \ref{rem:regcon}, the map $f$ of Theorem \ref{thm:con} is a $\varkappa(\delta)$-almost isometry onto $X$ when $k=n$, and is a $(1+\varkappa(\delta))$-Lipschitz and $(1-\varkappa(\delta))$-open map when $k<n$ (note that $f$ is $C\mu$-close to the $\mu$-approximation $g$ globally, where $\mu\ll\ell\delta^2$).

\subsection{Intrinsic metric of the fibers}\label{sec:int}

In this section, we prove Theorem \ref{thm:fbr}(1).
We first show that an almost regular map is an almost Lipschitz submersion near the strained point.
Recall that for a subset $A$ of an Alexandrov space and $p\in A$, the \textit{space of directions} of $A$ at $p$ is defined as the subset of $\Sigma_p$ consisting of all limit points $\lim_{i\to\infty}(p_i)'_p$, where $p_i\in A$ converges to $p$.

\begin{prop}[cf.\ \cite{Y2}]\label{prop:int}
Let $M$ be an $n$-dimensional Alexandrov space, $p\in M$ and $k<n$.
Let $f:B(p,\ell)\to\mathbb R^k$ be a $\delta$-almost regular map associated with a $(k,\delta)$-strainer $\{(a_i,b_i)\}_{i=1}^k$ at $p$ with length $>\ell$.
Then, $f$ is a $\varkappa(\delta)$-almost Lipschitz submersion on $B(p,\ell\delta)$ in the following sense:
\[\left|\frac{|f(x)f(y)|}{|xy|}-\sin\angle(y'_x,V_x)\right|<\varkappa(\delta),\]
for any $x,y\in B(p,\ell\delta)$ and any direction $y'_x$, where $V_x$ denotes the space of directions of $f^{-1}(f(x))$ at $x$ (and is nonempty).
\end{prop}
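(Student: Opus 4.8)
The plan is to use the defining inequality \eqref{eq:reg} to replace $f$ by the distance coordinate $g$ associated with the strainer, thereby reducing the statement to a computation in comparison geometry together with an identification of $V_x$ with the ``vertical'' directions at $x$. Fix $x\in B(p,\ell\delta)$ and $y\in B(p,\ell\delta)$ with $y\neq x$. As is well known (cf.\ \cite{BGP}), $\{(a_i,b_i)\}_{i=1}^k$ is a $(k,\varkappa(\delta))$-strainer at $x$ with length $>\ell/2$, so Lemma \ref{lem:susp} provides a $\varkappa(\delta)$-approximation $\Sigma_x\to S^k(\Sigma)=\mathbb S^{k-1}*\Sigma$ sending $\{((A_i)'_x,(B_i)'_x)\}$ to an orthogonal $k$-frame; in particular $\diam(A_i)'_x<\varkappa(\delta)$, so we may fix a representative $\xi_i\in(A_i)'_x$, and $\Sigma$ is nonempty since $k<n$ (Remark \ref{rem:susp}). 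Denote by $V^0_x\subset\Sigma_x$ the set of ``vertical'' directions, namely those mapped into (the $\varkappa(\delta)$-neighborhood of) the factor $\Sigma$ under this approximation.

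The comparison computation for $g$ goes as follows. By the first variation formula (law of cosines in the model surface), using $|a_ix|>\ell/2$, one has $g_i(x)-g_i(y)=|a_ix|-|a_iy|=|xy|\cos\tilde\angle a_ixy+\varkappa(\delta)|xy|$, and Lemma \ref{lem:str} replaces $\tilde\angle a_ixy$ by $\angle(\xi_i,y'_x)$ up to $\varkappa(\delta)$ (for any choice of shortest paths, since $\diam(A_i)'_x<\varkappa(\delta)$). Squaring, summing over $i$, and using the elementary identity $\sum_{i=1}^k\cos^2\angle(\xi_i,\zeta)=\sin^2\angle(\zeta,V^0_x)+\varkappa(\delta)$, valid for every $\zeta\in\Sigma_x$ (a direct computation in the join $\mathbb S^{k-1}*\Sigma$, using that $\{\xi_i\}$ is an orthonormal frame of $\mathbb S^{k-1}$ and that $\angle(\cdot,\Sigma)+\angle(\cdot,\mathbb S^{k-1})=\pi/2$ there), we get $|g(x)g(y)|^2/|xy|^2=\sin^2\angle(y'_x,V^0_x)+\varkappa(\delta)$. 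Since \eqref{eq:reg} gives $\bigl||f(x)f(y)|-|g(x)g(y)|\bigr|<\delta|xy|$ and both quotients are at most $1+\varkappa(\delta)$ by Proposition \ref{prop:reg}, and since $\angle(y'_x,V^0_x)\le\pi/2$, this yields
\[
\frac{|f(x)f(y)|}{|xy|}=\sin\angle(y'_x,V^0_x)+\varkappa(\delta).
\]

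It remains to show that $V_x$ is nonempty and $\varkappa(\delta)$-Hausdorff close to $V^0_x$, which is the technical core. One inclusion is immediate: if $\zeta=\lim(z_j)'_x\in V_x$ with $z_j\in f^{-1}(f(x))$ and $z_j\to x$, then $|g(x)g(z_j)|<\delta|xz_j|$ by \eqref{eq:reg}, so the computation above forces $|\cos\angle(\xi_i,(z_j)'_x)|<\varkappa(\delta)$ for all $i$, hence $\zeta$ lies within $\varkappa(\delta)$ of $V^0_x$. For the reverse inclusion, given a vertical direction $v\in V^0_x$ I would build a curve in $f^{-1}(f(x))$ issuing from $x$ with initial direction $\varkappa(\delta)$-close to $v$: moving from a point along a nearly-vertical geodesic direction changes $f$ by only $\varkappa(\delta)$ times the distance travelled (by the computation above), and this error can be corrected using the $(1-\varkappa(\delta))$-openness of $f$ (Proposition \ref{prop:reg}(2)) at the cost of a displacement that is $\varkappa(\delta)$-small relative to the step; iterating and passing to a limit of broken geodesics, exactly as in the proof of Proposition \ref{prop:conn}, produces the desired curve, whose direction at $x$ is controlled because the accumulated corrections keep it inside a $\varkappa(\delta)$-thin tube around the initial geodesic. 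This in particular gives $V_x\neq\emptyset$. I expect this last construction --- genuinely controlling the direction at $x$ of the limiting curve in the fiber --- to be the main obstacle; it is essentially where the arguments of \cite[\S11--12]{BGP} and \cite{Y2} are used, the role of \eqref{eq:reg} being to have reduced everything else to transparent comparison geometry.

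Combining the two inclusions gives $\angle(y'_x,V_x)=\angle(y'_x,V^0_x)+\varkappa(\delta)$, and since $\sin$ is $1$-Lipschitz the displayed identity becomes $|f(x)f(y)|/|xy|=\sin\angle(y'_x,V_x)+\varkappa(\delta)$, which is the assertion.
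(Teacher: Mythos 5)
Your first half is essentially the paper's own argument: using \eqref{eq:reg} to trade $f$ for the distance coordinate $g$, then Lemma \ref{lem:str} and Lemma \ref{lem:susp} to get $|f(x)f(y)|^2/|xy|^2=\sum_{i}\cos^2\angle a_ixy+\varkappa(\delta)=\sin^2\angle(y'_x,V^0_x)+\varkappa(\delta)$, together with the easy inclusion of $V_x$ into the $\varkappa(\delta)$-neighborhood of the vertical set $V^0_x$. The gap is the reverse inclusion, which you yourself flag as the ``main obstacle'' and leave as a plan: you propose to build a curve inside $f^{-1}(f(x))$ issuing from $x$ with prescribed initial direction, by iterated openness-corrections and a limit of broken geodesics. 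This is not carried out, and as described it would be genuinely delicate: a limit of broken geodesics (as in Proposition \ref{prop:conn} or Corollary \ref{cor:int}) produces a curve in the fiber but gives no control on its behaviour at $x$ --- such a limit curve need not have a well-defined direction there --- and keeping the broken geodesics in a ``$\varkappa(\delta)$-thin tube'' around the geodesic in direction $v$ does not by itself bound angles at $x$, since with only a lower curvature bound metric closeness at a given scale controls comparison angles, while $\angle\ge\tilde\angle$ goes the wrong way for the upper bound you need. So the key assertion --- that every almost-vertical direction is $\varkappa(\delta)$-close to $V_x$, and in particular that $V_x\neq\emptyset$ --- is not actually proved in your write-up.

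The paper closes this step without any curve construction, and you should too. Given $\xi$ in the vertical set $\tilde\Sigma$ (nonempty by Remark \ref{rem:susp}), take $z$ near $x$ with $z'_x$ close to $\xi$; the displayed computation gives $|f(x)f(z)|<\varkappa(\delta)|xz|$; by the $(1-\varkappa(\delta))$-openness of $f$ (Proposition \ref{prop:reg}(2)) there is $w\in f^{-1}(f(x))$ with $(1-\varkappa(\delta))|wz|\le|f(x)f(z)|$, hence $|wz|<\varkappa(\delta)|xz|$. Since $z$ can be chosen arbitrarily close to $x$, this yields $\angle zxw<\varkappa(\delta)$, so the directions to these fiber points accumulate $\varkappa(\delta)$-close to $\xi$, i.e.\ $\dist(\xi,V_x)<\varkappa(\delta)$ and $V_x\neq\emptyset$. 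Replacing your curve-building paragraph by this one-step openness argument both fills the gap and matches the paper's proof.
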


In particular, by Remark \ref{rem:regcon}, the map $f$ of Theorem \ref{thm:con} is locally (indeed, globally) a $\varkappa(\delta)$-almost Lipschitz submersion when $k<n$.

\begin{rem}\label{rem:int}
The above definition of an almost Lipschitz submersion is slightly stronger than that in \cite{Y2} (see Theorem \ref{thm:subm}).
\end{rem}

\begin{proof}[Proof of Proposition \ref{prop:int}]
Let $x,y\in B(p,\ell\delta)$.
Then, by Lemma \ref{lem:str} and the inequality \eqref{eq:reg}, we have
\[\left|\frac{|f(x)f(y)|^2}{|xy|^2}-\sum_{i=1}^k\cos^2\angle a_ixy\right|<\varkappa(\delta).\]
By Lemma \ref{lem:susp}, there exists a $\varkappa(\delta)$-approximation from $\Sigma_x$ to a $k$-fold spherical suspension $S^k(\Sigma)$, where $\Sigma$ has curvature $\ge1$ and is nonempty (see Remark \ref{rem:susp}).
Let $\tilde\Sigma\subset\Sigma_x$ be a subset corresponding to $\Sigma\subset S^k(\Sigma)$ via this approximation.
Then, we have
\[\left|\sum_{i=1}^k\cos^2\angle a_ixy-\sin^2\angle(y'_x,\tilde\Sigma)\right|<\varkappa(\delta).\]
Thus, it suffices to show that the Hausdorff distance between $\tilde\Sigma$ and $V_x$ is less than $\varkappa(\delta)$.
The above inequalities immediately imply that $V_x$ is contained in the $\varkappa(\delta)$-neighborhood of $\tilde\Sigma$.
On the other hand, let $\xi\in\tilde\Sigma$.
Choose a point $z$ near $x$ such that $z'_x$ is sufficiently close to $\xi$.
Then, the above inequalities imply that $|f(x)f(z)|<\varkappa(\delta)|xz|$.
By the $(1-\varkappa(\delta))$-openness of $f$, we can find $w\in f^{-1}(f(x))$ such that $(1-\varkappa(\delta))|wz|\le|f(x)f(z)|$.
In particular, we have $|wz|<\varkappa(\delta)|xz|$.
Since $z$ can be chosen arbitrarily close to $x$, we obtain $\angle zxw<\varkappa(\delta)$.
This completes the proof.
\end{proof}

Now, we prove Theorem \ref{thm:fbr}(1).
By Remarks \ref{rem:fbr} and \ref{rem:regcon}, it suffices to show the following:

\begin{cor}[cf.\ {\cite[11.11]{BGP}}]\label{cor:int}
Let $M$ be an $n$-dimensional Alexandrov space, $p\in M$ and $k<n$.
Let $f:B(p,\ell)\to\mathbb R^k$ be a $\delta$-almost regular map associated with a $(k,\delta)$-strainer at $p$ with length $>\ell$.
Then, for any $x,y\in f^{-1}(f(p))\cap B(p,\ell\delta)$, there exists a curve in $f^{-1}(f(p))$ connecting $x$ and $y$ of length $<(1+\varkappa(\delta))|xy|$.
\end{cor}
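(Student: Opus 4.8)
\emph{Overall plan.} Corollary~\ref{cor:int} is a quantitative refinement of Proposition~\ref{prop:conn} (and of \cite[11.11]{BGP}): the point is to replace the crude factor $c(\varepsilon)$ there by $1+\varkappa(\delta)$. The improvement comes entirely from the almost-submersion estimate of Proposition~\ref{prop:int}. Indeed, if $x,y\in f^{-1}(f(p))$ then $|f(x)f(y)|=0$, so that proposition gives $\sin\angle(y'_x,V_x)<\varkappa(\delta)$, hence $\angle(y'_x,V_x)<\varkappa(\delta)$, for every direction $y'_x$ of a shortest path $xy$; that is, the direction from $x$ toward $y$ is almost tangent to the fiber. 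The plan is therefore to move inside the fiber step by step, essentially in the direction of $y$, so that each step of length $s$ decreases the distance to $y$ by $(1-\varkappa(\delta))s$, and $y$ is reached along a curve in the fiber of total length $(1+\varkappa(\delta))|xy|$.

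\emph{The one-step construction.} I would first establish: for $x,y$ in the fiber with $x\ne y$ and lying in a small enough ball around $p$, and for every $\eta>0$, there is a point $x'$ in the fiber with $0<|xx'|<\eta$ and $|x'y|\le|xy|-(1-\varkappa(\delta))|xx'|$. To get $x'$: fix $y'_x$, choose $v\in V_x$ with $\angle(y'_x,v)<\varkappa(\delta)$ using Proposition~\ref{prop:int}; since $V_x$ is the space of directions of $f^{-1}(f(p))$ at $x$, there is a point $x'$ of the fiber arbitrarily close to $x$ admitting a shortest path $xx'$ whose direction is $\varkappa(\delta)$-close to $v$, hence $2\varkappa(\delta)$-close to $y'_x$. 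Applying the hinge form of the Toponogov comparison (curvature $\ge\kappa$) to the hinge at $x$ formed by these two shortest paths, with opening angle $<2\varkappa(\delta)$, and using that the scale is small, gives $|x'y|\le|xy|-(1-\varkappa(\delta))|xx'|$ provided $|xx'|$ is small relative to $|xy|$ and $\kappa$, which we can ensure by taking $x'$ close enough to $x$.

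\emph{Iteration and limit.} Iterating the one-step construction starting from $x$ produces points of the fiber along which the distance to $y$ decreases at rate at least $1-\varkappa(\delta)$ per unit length travelled; the total length therefore stays below $(1-\varkappa(\delta))^{-1}|xy|<(1+\varkappa(\delta))|xy|$, and a supremum argument shows that $y$ itself is eventually reached (a terminal point at which the construction could not be continued would have to equal $y$). Joining the successive points by short geodesics and passing to a limit of such broken geodesics with mesh tending to $0$, exactly as at the end of the proof of Proposition~\ref{prop:conn}, and using that $f^{-1}(f(p))$ is closed, yields a curve inside $f^{-1}(f(p))$ from $x$ to $y$ of length $<(1+\varkappa(\delta))|xy|$.

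\emph{Main difficulty.} The crux is obtaining the sharp rate $1-\varkappa(\delta)$, which is possible only because $y'_x$ is almost tangent to the fiber by Proposition~\ref{prop:int}, not merely transverse to the strainer directions as in Proposition~\ref{prop:conn}; one must arrange the hinge estimate so that the errors $\varkappa(\delta)$ do not accumulate along the iteration, which forces the step sizes to be chosen small compared with the current distance to $y$. A secondary point is that the whole construction stays in $B(p,\ell\delta)$, where Propositions~\ref{prop:reg} and~\ref{prop:int} are available; since the distance to $y$ is nonincreasing along the flow, this holds once $|xy|$ is small, which is the only regime needed for Theorem~\ref{thm:fbr}(1) in view of Remark~\ref{rem:fbr}.
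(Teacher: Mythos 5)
Your proposal is correct and follows essentially the same route as the paper: apply Proposition \ref{prop:int} with $|f(x)f(y)|=0$ to find fiber points near $x$ in a direction $\varkappa(\delta)$-close to $y'_x$, deduce the distance decrement $|yz|<|yx|-(1-\varkappa(\delta))|xz|$ (the paper invokes the first variation formula where you use a hinge comparison, an equivalent step), and conclude by the standard limit-of-broken-geodesics argument as in Proposition \ref{prop:conn}.
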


\begin{proof}
By Proposition \ref{prop:int}, for any $x,y\in f^{-1}(f(p))\cap B(p,\ell\delta)$, there exists $z\in f^{-1}(f(p))$ arbitrarily close to $x$ such that $\angle yxz<\varkappa(\delta)$.
In particular, the first variation formula implies that $|yz|<|yx|-(1-\varkappa(\delta))|xz|$.
Thus, the desired curve is obtained by taking a limit of broken geodesics.
\end{proof}

\subsection{Lower bound for the volume of the fibers}\label{sec:low}

In this section, we prove the left inequality of Theorem \ref{thm:fbr}(2).
By Remark \ref{rem:regcon}, it suffices to show the following:

\begin{prop}\label{prop:low}
Let $M$ be an $n$-dimensional Alexandrov space, $p\in M$ and $k<n$.
Let $f:B(p,\ell)\to\mathbb R^k$ be a $\delta$-almost regular map associated with a $(k,\delta)$-strainer $\{(a_i,b_i)\}_{i=1}^k$ at $p$ with length $>\ell$.
Suppose $\vol_n(B(p,D))>v$.
Then, we have
\[\vol_{n-k}f^{-1}(f(p))>c(D,v,\ell).\]
\end{prop}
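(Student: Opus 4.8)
The plan is to work at a definite scale comparable to $\ell$, rather than at the strainer‑proportional scale $\ell\delta$ of Propositions \ref{prop:reg} and \ref{prop:int}, so that the resulting constant is independent of $\delta$; to convert the hypothesis $\vol_n B(p,D)>v$ into a lower bound for $\vol_{n-k}$ of \emph{some} fiber of $f$ near $f(p)$ via a coarea inequality; and to transfer that bound to the fiber over $f(p)$ itself by means of the bi‑Lipschitz equivalence of nearby fibers.

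Since $\delta<c_0$ and the constant $c_0=c_0(n,\kappa)$ may be taken as small as we wish, the given strainer is also a $(k,c_0)$-strainer and $f$ is also a $c_0$-almost regular map associated with it; hence Proposition \ref{prop:int}, applied with $c_0$ in place of $\delta$, shows that $f$ is a $\theta$-almost Lipschitz submersion on $B(p,c_0\ell)$ for a fixed small $\theta:=\varkappa(c_0)$ depending only on $n$ and $\kappa$. In particular $f$ is $(1+\theta)$-Lipschitz and $(1-\theta)$-open there. Two facts about a $\theta$-almost Lipschitz submersion with $\theta$ small will be needed, \emph{neither} requiring a lower bound on the volumes of spaces of directions: (a) the coarea inequality holds on $B(p,c_0\ell)$ with coarea factor in $[1-C\theta,1+C\theta]$; and (b) for every $v$ with $|v-f(p)|<c_0\ell/2$ there is a $(1+C\theta)$-bi-Lipschitz map $f^{-1}(v)\cap B(p,c_0\ell/2)\to f^{-1}(f(p))\cap B(p,c_0\ell)$, obtained from a gradient-like (co-Lipschitz) flow of $f$ (cf.\ \cite{BGP}, the $e^\varepsilon$-Lipschitz and co-Lipschitz maps of \cite{RX} and \cite{X}, and \S\ref{sec:conti}).

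Put $\rho:=c_0\ell/4$ and $A:=B(p,\rho)$. By the Bishop--Gromov inequality,
\[\vol_n A\ \ge\ \frac{\vol^\kappa_n(\rho)}{\vol^\kappa_n(D)}\,\vol_n B(p,D)\ >\ \frac{\vol^\kappa_n(\rho)}{\vol^\kappa_n(D)}\,v\ =:\ v_0\ >\ 0,\]
a quantity depending only on $n,\kappa,D,v,\ell$. Since $f$ is $(1+\theta)$-Lipschitz we have $f(A)\subset\bar B(f(p),(1+\theta)\rho)$, so fact (a) gives
\[\int_{\mathbb R^k}\vol_{n-k}\bigl(f^{-1}(v)\cap A\bigr)\,d\mathcal H^k(v)\ \ge\ (1-C\theta)\,\vol_n A\ \ge\ \tfrac12\,v_0,\]
while the domain of integration has $\mathcal H^k$-measure at most $\omega_k\bigl((1+\theta)\rho\bigr)^k$. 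Hence some $v_*$ satisfies $|v_*-f(p)|\le(1+\theta)\rho<c_0\ell/2$ and $\vol_{n-k}(f^{-1}(v_*)\cap A)\ge v_1$, where $v_1:=v_0/\bigl(2\,\omega_k((1+\theta)\rho)^k\bigr)>0$ again depends only on $n,\kappa,D,v,\ell$. Because $A\subset B(p,c_0\ell/2)$, fact (b) applied to $v_*$ yields
\begin{align*}
\vol_{n-k}f^{-1}(f(p))&\ \ge\ \vol_{n-k}\bigl(f^{-1}(f(p))\cap B(p,c_0\ell)\bigr)\\
&\ \ge\ (1+C\theta)^{-(n-k)}\vol_{n-k}\bigl(f^{-1}(v_*)\cap A\bigr)\ \ge\ (1+C\theta)^{-(n-k)}v_1,
\end{align*}
which is the desired lower bound $c(D,v,\ell)$.

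I expect the main obstacle to be fact (b). Re-establishing the almost-submersion property and the coarea inequality at the definite scale $c_0\ell$ is routine — the former is just the reinterpretation with $c_0$ in place of $\delta$ used above. But since no lower bound on the volumes of spaces of directions is assumed here, Perelman's fibration theorem is unavailable, so the bi-Lipschitz equivalence of fibers over nearby values cannot be deduced from a local trivialization; it has to be produced directly by constructing gradient-like (co-Lipschitz) flows for almost regular maps, the same mechanism that underlies the neighborhood retractions of \cite{RX} and \cite{X} and the fiber-volume continuity of \S\ref{sec:conti}. Carrying out that construction, and making the coarea inequality precise, is where the real work lies.
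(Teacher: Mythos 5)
Your overall route --- average fiber volumes over a ball via a coarea inequality to find \emph{some} nearby fiber of definite $(n-k)$-volume, then transfer that bound to $f^{-1}(f(p))$ by a $(1+C\theta)$-bi-Lipschitz comparison of fibers --- is genuinely different from the paper's, but it rests on the two facts (a) and (b), neither of which is established, and you yourself defer both to ``the real work''. Fact (a) is the hard direction of the coarea formula: the Eilenberg inequality for Lipschitz maps bounds $\int\vol_{n-k}(f^{-1}(v)\cap A)\,d\mathcal H^k(v)$ from \emph{above} by $C\vol_n A$; the lower bound you invoke already presupposes that the fibers through almost every point carry definite $(n-k)$-dimensional measure locally, which is essentially the content of Proposition \ref{prop:low} itself (it could conceivably be extracted from the rectifiable structure of $M$ together with a Jacobian estimate for $f$ in almost-isometric charts at $(n,\cdot)$-strained points, but nothing of that is carried out, and it is not ``routine''). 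Fact (b) is the more serious gap: gradient-like/co-Lipschitz retractions as in \cite{RX} and \cite{X} yield neighborhood retractions and homotopy lifting, i.e.\ purely topological conclusions, with no bi-Lipschitz or measure control on fibers; they do not produce a $(1+C\theta)$-bi-Lipschitz map between fibers over values a definite distance ($\sim c_0\ell$) apart. The paper's own fiber-comparison machinery in \S\ref{sec:conti} is much weaker than your (b): Lemma \ref{lem:conti1} gives an almost isometry only on the strained part $F_v(n-k,\delta,\rho)$ of a fiber and only for $|uv|\ll\rho\delta$, and to convert this into a volume comparison one must already know that $\vol_{n-k}F_v$ is bounded below --- i.e.\ Proposition \ref{prop:low} (together with Proposition \ref{prop:up}) is an \emph{input} there. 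So assuming (b) is close to circular, and replacing it by a chain of applications of Lemma \ref{lem:conti1} across intermediate values would require the very lower bound you are trying to prove at each intermediate fiber. Note also that the paper explicitly does not know whether fiber volume is continuous; a clean statement like your (b) would be a result at least at the level of Theorem \ref{thm:fbr}(3).

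For contrast, the paper's proof never compares different fibers and uses no coarea argument: it works entirely inside the single fiber $f^{-1}(f(p))$. Lemma \ref{lem:low} uses the noncollapsing hypothesis and a limiting (contradiction) argument to find $q\in f^{-1}(f(p))$ with $\rho<|pq|<\ell\delta$, $\rho=c(D,v,\ell)$; then $(p,q)$ is adjoined as a $(k+1)$-st strainer pair at the midpoint of $pq$, the $(1-\varkappa(\delta))$-openness of $f$ pushes that midpoint back into the fiber, and iterating this $(n-k)$ times produces $\hat p\in f^{-1}(f(p))$ and $h=(|a_{k+1}\cdot|,\dots,|a_n\cdot|)$ such that $(f,h)$ is almost regular for an $(n,\varkappa(\delta))$-strainer of definite length $\hat\ell=c(D,v,\ell)$; Proposition \ref{prop:reg}(1) then makes $h$ restricted to the fiber an almost isometry from a definite-size neighborhood of $\hat p$ in the fiber onto an $(n-k)$-ball, which gives the bound directly (the $\delta$-dependence of the final radius is harmless because $\delta$ is fixed depending only on $n$ and $\kappa$, so the scale issue motivating your setup does not actually arise). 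As it stands, your proposal has a genuine gap at both (a) and (b), with (b) being a missing idea rather than a missing verification.
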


\begin{rem}\label{rem:low}
Since $f$ is $(1-\varkappa(\delta))$-open on $B(p,\ell\delta)$, we have $\vol_{n-k}f^{-1}(u)>c(D,v,\ell)$ for any $u\in B(f(p),\ell\delta/2)$
\end{rem}

From now, we fix sufficiently small $\delta$ depending only on $n$ and $\kappa$ (we determine it later).
We first give a lower bound for the diameter of the fiber:

\begin{lem}\label{lem:low}
Under the same assumption as Proposition \ref{prop:low}, there exists a point $q\in f^{-1}(f(p))$ such that $\rho<|pq|<\ell\delta$, where $\rho=c(D,v,\ell)$.
\end{lem}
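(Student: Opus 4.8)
The plan is to combine the Bishop--Gromov inequality with a rescaling argument. From $\vol_n B(p,D)>v$ the Bishop--Gromov relative volume inequality gives $\vol_n B(p,\ell\delta)>v_1$ for some $v_1=c(D,v,\ell)>0$ ($\delta$ being now fixed); this is the only place the hypothesis $\vol_n B(p,D)>v$ is used. Suppose the assertion fails: then for every $\rho>0$ there are data $M$, $f$, $p$ satisfying the hypotheses (with the fixed $\delta$ and the same $D,v,\ell$) for which $f^{-1}(f(p))\cap B(p,\ell\delta)\subset\bar B(p,\rho)$. Taking $\rho=1/j$ gives a sequence $M_j$, $f_j$, $p_j$ such that $F_j:=f_j^{-1}(f_j(p_j))\cap B(p_j,\ell\delta)$ is compact with $d_j:=\diam F_j\le 2/j\to 0$.

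Rescale the metric of $M_j$ by $d_j^{-1}\to\infty$ (if $d_j=0$ rescale by $j$ instead; the argument is the same). By the Bishop--Gromov inequality, $\vol_n B_{d_j^{-1}M_j}(p_j,r)\ge c(D,v)\,r^n$ for all $r>0$ and all large $j$, so after passing to a subsequence $(d_j^{-1}M_j,p_j)$ converges in the pointed Gromov--Hausdorff topology to a complete $n$-dimensional Alexandrov space $(M_\infty,p_\infty)$ of curvature $\ge 0$ satisfying $\vol_n B_{M_\infty}(p_\infty,r)\ge c(D,v)\,r^n$ for all $r>0$. The rescaled strainers at $p_j$ have length $>d_j^{-1}\ell\to\infty$, and the rescaled maps $d_j^{-1}(f_j-f_j(p_j))$ are uniformly $(1+\varkappa(\delta))$-Lipschitz (Proposition \ref{prop:reg}); passing to a further subsequence they converge to $f_\infty\colon M_\infty\to\mathbb R^k$, which by Propositions \ref{prop:reg} and \ref{prop:int} is $(1+\varkappa(\delta))$-Lipschitz, $(1-\varkappa(\delta))$-open, and a $\delta$-almost regular map on all of $M_\infty$, now associated with a $(k,\delta)$-strainer of infinite length at $p_\infty$. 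Moreover, in the rescaled metric $\diam F_j=1$, while the part of $f_j^{-1}(f_j(p_j))$ lying outside $B(p_j,\ell\delta)$ has distance $>d_j^{-1}\ell\delta\to\infty$ from $p_j$ and so escapes in the limit; hence $F_\infty:=f_\infty^{-1}(f_\infty(p_\infty))$ is compact of diameter $1$.

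Now the infinitely long $(k,\delta)$-strainer at $p_\infty$ forces $M_\infty$ to split off a Euclidean factor: $M_\infty$ is $\varkappa(\delta)$-close, at every scale, to a metric product $\mathbb R^k\times N$ with $N$ an $(n-k)$-dimensional Alexandrov space of curvature $\ge 0$, and $f_\infty$ is $\varkappa(\delta)$-close to the projection onto $\mathbb R^k$. Combining this with the $(1-\varkappa(\delta))$-openness of $f_\infty$ and $\diam F_\infty=1$ shows $\diam N\le 4$: each slice $\{0\}\times N$ maps into a $\varkappa(\delta)\cdot\diam N$-neighbourhood of $F_\infty$, hence has diameter at most $\diam F_\infty+\varkappa(\delta)\diam N$. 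Therefore $\vol_n B_{M_\infty}(p_\infty,r)\le C\,r^k\vol_{n-k}(N)\le C'\,r^k$ for all $r$, the last step being the Bishop--Gromov inequality in $N$ (curvature $\ge 0$, diameter $\le 4$). Comparing with $\vol_n B_{M_\infty}(p_\infty,r)\ge c(D,v)\,r^n$ yields $r^{n-k}\le C'/c(D,v)$ for all $r>0$, which is impossible since $k<n$. Thus the lemma holds with $\rho$ any positive number below the threshold implicit in these estimates, and a point $q$ with $\rho<|pq|<\ell\delta$ then exists because $\diam\bigl(f^{-1}(f(p))\cap B(p,\ell\delta)\bigr)>\rho$ while every point of this set satisfies $|pq|<\ell\delta$.

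The main obstacle is the last paragraph: extracting from the infinitely long $(k,\delta)$-strainer an (approximate) product structure $M_\infty\approx\mathbb R^k\times N$ and, crucially, seeing that the ``vertical'' diameter of $M_\infty$ is controlled by the single bounded fiber $F_\infty$ rather than growing with scale --- it is exactly the boundedness of $N$ that makes the volume of $M_\infty$ grow like $r^k$ instead of $r^n$. One should also verify carefully that, after rescaling, the portion of the fiber inside $B(p_j,\ell\delta)$ already captures all of $F_\infty$ (this is where the smallness $d_j\ll\ell\delta$, cf.\ Remark \ref{rem:fbr}, enters). Everything else is Bishop--Gromov bookkeeping.
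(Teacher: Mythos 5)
Your blow-up strategy has a genuine gap exactly where you flagged it, and unfortunately that step is not a technicality but the whole content. With $\delta$ \emph{fixed}, a $(k,\delta)$-strainer of infinite length does not force $M_\infty$ to be $\varkappa(\delta)$-close at scale $R$ to a product $\mathbb R^k\times N$ with the \emph{same bounded} factor $N$ at every scale: splitting (even approximate splitting with error small relative to the scale and a scale-independent fiber) requires the strainer parameter to tend to $0$, which you do not have. Moreover, the soft information you actually possess does not bound the ``vertical'' size at scale $R$ by $\diam F_\infty$: from $(1+\varkappa(\delta))$-Lipschitzness and $(1-\varkappa(\delta))$-openness one only gets that the fibers over $u$ and $v$ are at Hausdorff distance $\le C|u-v|$, hence $\diam f_\infty^{-1}(v)\le\diam F_\infty+C|v-f_\infty(p_\infty)|$. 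At scale $R$ the relevant values $v$ have $|v|\sim R$, so the fibers there may a priori have diameter $\sim R$; your slice estimate then gives $\diam N_R\lesssim\varkappa(\delta)R$ at best (not $\le4$), and the resulting volume bound is $\vol_n B(p_\infty,R)\lesssim R^k\cdot R^{n-k}\sim R^n$, which does not contradict the Euclidean volume growth $\ge c(D,v)R^n$ (and even the intermediate bound $\varkappa(\delta)^{n-k}R^n$ would only contradict it if $\delta$ were allowed to depend on $D$ and $v$, which it is not). Closing this gap essentially amounts to proving a uniform upper volume bound for the fibers, i.e.\ the content of Proposition \ref{prop:up}, which is proved later by a much harder induction; so the lemma cannot lean on it here.

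For comparison, the paper's proof needs no rescaling and no splitting: assuming the conclusion fails along a sequence, one takes the (noncollapsed, because $\vol_n B(p_j,D)>v$) limit $(M,p)$, which is $n$-dimensional, so since $k<n$ the suspension description of $\Sigma_p$ (Lemma \ref{lem:susp}, Remark \ref{rem:susp}) provides a direction almost orthogonal to all strainer directions; moving a definite distance in that direction gives $q$ with $|g(p)g(q)|<\varkappa(\delta)|pq|$, this transfers to $M_j$, and the inequality \eqref{eq:reg} plus the $(1-\varkappa(\delta))$-openness of $f_j$ pushes $q_j$ into the fiber at a definite distance from $p_j$, contradicting the assumption. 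Your first two paragraphs (the contradiction setup, convergence, and persistence of almost regularity) are fine, but if you want to salvage your argument you should replace the blow-up/splitting step by this direct use of the nonempty suspension factor in the \emph{unrescaled} noncollapsed limit.
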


\begin{proof}
We argue by contradiction.
Suppose that there exists a sequence of $n$-dimensional Alexandrov spaces $(M_j,p_j)$ with $\vol_n B(p_j,D)>v$ and $\delta$-almost regular maps $f_j:B(p_j,\ell)\to\mathbb R^k$ associated with $(k,\delta)$-strainers $\{(a_i^j,b_i^j)\}_{i=1}^k$ at $p_j$ with lengths $>\ell$ such that $\diam f_j^{-1}(f_j(p_j))\cap B(p_j,\ell\delta)\to 0$.
For simplicity, we assume that $|p_ja_i^j|$, $|p_jb_i^j|$ are uniformly bounded above.
Since $\vol_n B(p_j,D)>v$, we may assume that $(M_j,p_j)$ converges to an Alexandrov space $(M,p)$ of dimension $n$ (note that it is different from $(M,p)$ in the statement of Lemma \ref{lem:low}; so is $q$ below).
Furthermore, since lengths $>\ell$, we may assume that $\{(a_i^j,b_i^j)\}_{i=1}^k$ converges to a $(k,2\delta)$-strainer $\{(a_i,b_i)\}_{i=1}^k$ at $p$.
Then, by Lemma \ref{lem:susp}, $\Sigma_p$ is $\varkappa(\delta)$-close to a $k$-fold spherical suspension $S^k(\Sigma)$, where $\Sigma$ is a space of curvature $\ge1$.
Notice that $\Sigma$ is nonempty since $k<n=\dim M$ (see Remark \ref{rem:susp}).
Therefore, there exists $q\in M$ near $p$ such that $|g(p)g(q)|<\varkappa(\delta)|pq|$, where $g$ denotes the distance coordinate associated with the strainer $\{(a_i,b_i)\}_{i=1}^k$.
Let $q_j\in M_j$ be a sequence converging to $q$.
Then $|g_j(p_j)g_j(q_j)|<\varkappa(\delta)|p_jq_j|$ for large $j$, where $g_j$ denotes the distance coordinate associated with the strainer $\{(a_i^j,b_i^j)\}_{i=1}^k$.
By the inequality \eqref{eq:reg}, we have $|f_j(p_j)f_j(q_j)|<\varkappa(\delta)|p_jq_j|$.
Hence, by the $(1-\varkappa(\delta))$-openness, we can find $\hat q_j\in f_j^{-1}(f_j(p_j))$ such that $|\hat q_jq_j|<\varkappa(\delta)|p_jq_j|$.
In particular, we have $|p_j\hat q_j|>|pq|/2$ if $\delta$ is small enough.
This contradicts the assumption that $\diam f_j^{-1}(f_j(p_j))\cap B(p_j,\ell\delta)\to 0$.
\end{proof}

\begin{proof}[Proof of Proposition \ref{prop:low}]
Let $q$ be as in Lemma \ref{lem:low} and let $r$ be the midpoint of a shortest path connecting $p$ and $q$.
Put $a_{k+1}:=p$ and $b_{k+1}:=q$.
Then, by the inequality \eqref{eq:reg} and Lemma \ref{lem:str}, we see that $\{(a_i,b_i)\}_{i=1}^{k+1}$ is a $(k+1,\varkappa(\delta))$-strainer at $r$ with length $>\rho/2$.
Furthermore, we have $|f(p)f(r)|<\varkappa(\delta)|pr|$.
Thus, by the $(1-\varkappa(\delta))$-openness, we can find $s\in f^{-1}(f(p))$ such that $|ps|<\varkappa(\delta)|pr|$.
Then, $\{(a_i,b_i)\}_{i=1}^{k+1}$ is also a $(k+1,\varkappa(\delta))$-strainer for $s$ with length $>\rho/3$.
Hence, $(f,|a_{k+1}\cdot|)$ is a $\varkappa(\delta)$-almost regular map around $s$.
Repeating this argument $(n-k)$-times, we get $\hat p\in f^{-1}(f(p))$ and $h=(|a_{k+1}\cdot|,\dots,|a_n\cdot|)$ such that $(f,h)$ is a $\varkappa(\delta)$-almost regular map associated with a $(n,\varkappa(\delta))$-strainer at  $\hat p$ with length $>\hat\ell=c(D,v,\ell)$.
Thus, by Proposition \ref{prop:reg}(2),  $(f,h)$ is a $\varkappa(\delta)$-almost isometry from $B(\hat p,\hat\ell\delta)$ to an open subset of $\mathbb R^n$.
Therefore, the restriction of $h$ to $f^{-1}(f(p))$ gives a $\varkappa(\delta)$-almost isometry from a neighborhood of $\hat p$ in $f^{-1}(f(p))$ to an $(\hat\ell\delta/2)$-ball in $\mathbb R^{n-k}$.
This completes the proof (fix small $\delta$ such that the last $\varkappa(\delta)$ is less than $1/2$).
\end{proof}

\begin{rem}\label{rem:recti}
Let $f:B(p,\ell)\to\mathbb R^k$ be a $\delta$-almost regular map associated with a $(k,\delta)$-strainer at $p$ with length $>\ell$.
Set $F:=f^{-1}(f(p))\cap B(p,\ell\delta)$.
Suppose that $x\in F$ has an $(n-k,\delta)$-strainer $\{(a_{k+i},b_{k+i})\}_{i=1}^{n-k}$ such that $a_{k+i},b_{k+i}\in F$.
Then, the same argument as above shows that the restriction of the distance coordinate of this strainer to $F$ gives a $\varkappa(\delta)$-almost isometry from a neighborhood of $x$ in $F$ to an open subset in $\mathbb R^{n-k}$.
We can also prove that the complement of the set of all such points $x$ in $F$ has Hausdorff dimension at most $n-k-1$.
The proof is similar to that of \cite[10,6]{BGP}.
See also Lemma \ref{lem:conti2}.
\end{rem}

\subsection{Upper bound for the volume of the fibers}\label{sec:up}

In this section, we prove the right inequality of Theorem \ref{thm:fbr}(2).
The proof is based on the theory of noncritical maps by Perelman \cite{P1} and the rescaling technique for collapsing sequences by Yamaguchi \cite{Y3}.
We always assume that $\delta$ is much smaller than $\varepsilon$ and $c(\varepsilon)$ and that so is $\varkappa(\delta)$ (see \S\ref{sec:note}).

We consider the following regularity of maps in this section:

\begin{dfn}\label{dfn:nc'}
Let $f:M\to\mathbb R^k$ be a map defined on an Alexandrov space $M$.
For positive numbers $\varepsilon$, $\delta$ and $\rho$, we say that $f$ is \textit{$(\varepsilon,\delta,\rho)$-noncritical} at $p\in M$ if there exists a map $g=(|a_1\cdot|,\dots,|a_k\cdot|):M\to\mathbb R^k$, where $a_i\in M$, such that
\begin{enumerate}
\item $|(f(x)-f(y))-(g(x)-g(y))|<\delta|xy|$ for any $x,y\in B(p,\rho)$;
\item $|a_ip|>\rho$ and $\tilde\angle a_ipa_{i'}>\pi/2-\delta$ for all $i\neq i'$;
\item there exists $w\in M$ such that $|wp|>\rho$ and $\tilde\angle a_ipw>\pi/2+\varepsilon$ for all $i$.
\end{enumerate}
\end{dfn}

\begin{rem}\label{rem:nc'}
This definition is a special case of Definition \ref{dfn:nc} (except for the existence of $\rho$).
In particular, if $f$ is $(\varepsilon,\delta,\rho)$-noncritical at $p$, then $k\le n$ and it is $c(\varepsilon)$-open on $B(p,\rho\delta)$ (see Proposition \ref{prop:open}).
\end{rem}

\begin{rem}\label{rem:ncreg}
Let $f:B(p,\ell)\to\mathbb R^k$ be a $\delta$-almost regular map associated with a $(k,\delta)$-strainer $\{(a_i,b_i)\}_{i=1}^k$ at $p$ with length $>\ell$.
Then, it is $(c,\delta,c\ell)$-noncritical at $p$.
Indeed, we can find $w\in M$ such that $|wp|>c\ell$ and $\tilde\angle a_ipw>\pi/2+c$ by Proposition \ref{prop:str}.
\end{rem}

For a (compact) metric space $X$, we denote by $\beta_\nu(X)$ the maximal possible number of $\nu$-discrete points in $X$.
Furthermore, for simplicity, we introduce a notation
\[v_m(X):=\sup_{0<\nu<1}\nu^m\beta_\nu(X)\]
for $m\ge0$.
Clearly, $\vol_m(X)\le C(m)\cdot v_m(X)$, where $\vol_m$ denotes the $m$-dimensional Hausdorff measure and $C(m)$ is a constant depending only on $m$.

We prove the following proposition in this section:

\begin{prop}\label{prop:up}
Let $M$ be an $n$-dimensional Alexandrov space and $k\le n$.
For a map $f:M\to\mathbb R^k$ and $p\in M$, set $F:=f^{-1}(f(p))$.
Let $F(\varepsilon,\delta,\rho)$ be the set of all $(\varepsilon,\delta,\rho)$-noncritical points of $f$ in $F$.
Then, for any sufficiently small $\delta>0$ (depending only on $n$, $\kappa$, $D$, $\varepsilon$ and $\rho$), we have
\[v_{n-k}(F(\varepsilon,\delta,\rho)\cap B(p,D))<C(D,\varepsilon,\rho).\]
In particular,
\[\vol_{n-k}(F(\varepsilon,\delta,\rho)\cap B(p,D))<C(D,\varepsilon,\rho).\]
\end{prop}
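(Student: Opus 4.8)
The plan is to combine a covering reduction, Perelman's local theory of noncritical maps, and a rescaling--compactness argument in the spirit of Yamaguchi to handle possible collapse inside the fiber. First I would reduce to a local estimate: it suffices to find $\delta_0=\delta_0(n,\kappa,\varepsilon)$ and $C_1=C_1(n,\kappa,\varepsilon,\rho)$ such that for every $(\varepsilon,\delta,\rho)$-noncritical point $x$ of any map $f$ with $\delta\le\delta_0$ one has $v_{n-k}\bigl(f^{-1}(f(x))\cap B(x,c(\varepsilon)\rho)\bigr)\le C_1$. Granting this, I would cover $F(\varepsilon,\delta,\rho)\cap B(p,D)$ by at most $C(n,\kappa,\varepsilon)(D/\rho+1)^n$ balls $B(x_i,c(\varepsilon)\rho/2)$ centred at noncritical points (a Vitali argument), observe that a $\nu$-separated subset of $F(\varepsilon,\delta,\rho)\cap B(p,D)$ meets each $B(x_i,c(\varepsilon)\rho/2)$ in a $\nu$-separated subset of $f^{-1}(f(x_i))\cap B(x_i,c(\varepsilon)\rho)$, and sum up to get $\nu^{n-k}\beta_\nu\le C(n,\kappa,\varepsilon)(D/\rho+1)^nC_1$ for all $\nu$, hence the asserted bound $C(D,\varepsilon,\rho)$.

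The case $k=n$ of the local estimate is immediate: by Proposition \ref{prop:open} the map $f$ is a local bi-Lipschitz homeomorphism near a noncritical point, and since $\delta\ll\varepsilon$ the injectivity persists on $B(x,c(\varepsilon)\rho)$, so $f^{-1}(f(x))\cap B(x,c(\varepsilon)\rho)=\{x\}$ and $v_0=1$. So assume $k<n$ from now on, and argue by contradiction. If the local estimate fails, there are $n$-dimensional Alexandrov spaces $M_m$ with curvature $\ge\kappa$, maps $f_m\colon M_m\to\mathbb R^k$, $(\varepsilon,\delta_m,\rho)$-noncritical points $x_m$ with $\delta_m\to 0$, and scales $\nu_m\in(0,1)$ with $\nu_m^{n-k}\beta_{\nu_m}\bigl(F_m\cap B(x_m,c(\varepsilon)\rho)\bigr)\to\infty$, where $F_m=f_m^{-1}(f_m(x_m))$; a packing estimate forces $\nu_m\to 0$, and one may take $\nu_m$ to nearly realise $\sup_\nu\nu^{n-k}\beta_\nu(F_m\cap B(x_m,c(\varepsilon)\rho))$. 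Rescaling the metric of $M_m$ by $1/\nu_m$ produces $\bar M_m$ of curvature $\ge\kappa\nu_m^2\to 0$, on which $\bar f_m$ is $(\varepsilon,\delta_m,R_m)$-noncritical at $x_m$ with $R_m:=\rho/\nu_m\to\infty$; since $\delta_m\ll\varepsilon$, $\bar f_m$ is $c(\varepsilon)$-open and $c(\varepsilon)^{-1}$-Lipschitz on $B(x_m,c(\varepsilon)R_m)$, the rescaled fiber $\bar F_m$ contains more than $N_mR_m^{n-k}$ points ($N_m\to\infty$) that are $1$-separated inside $B(x_m,R_m)$, and $\beta_t(\bar F_m\cap B(x_m,R_m))\lesssim t^{-(n-k)}\beta_1(\bar F_m\cap B(x_m,R_m))$ at every scale $t$ by the near-maximality of $\nu_m$.

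Passing to a pointed Gromov--Hausdorff limit $(\bar M_m,x_m)\to(Y,y)$ gives a complete noncompact Alexandrov space $Y$ of nonnegative curvature and dimension $\le n$; because $\delta_m\to 0$ and the strainer points of $\bar f_m$ escape to infinity, $\bar f_m$ converges to a genuine regular map $f_\infty\colon Y\to\mathbb R^k$ assembled from Busemann functions, and $\bar F_m$ converges (Hausdorff, on bounded sets, using openness) to $F_\infty:=f_\infty^{-1}(f_\infty(y))$. The contradiction is to come from the fact that in a nonnegatively curved space the fiber $F_\infty$ of the regular map $f_\infty$ is genuinely $(n-k)$-dimensional and satisfies $v_{n-k}(F_\infty\cap B(y,R))\le C(n)R^{n-k}$ — which I would obtain from Perelman's local analysis of noncritical maps in \cite[\S3]{P1} (the propositions of \S\ref{sec:per}), from the left-hand inequality of Theorem \ref{thm:fbr}(2), i.e.\ Proposition \ref{prop:low}, which yields the genuine $(n-k)$-dimensionality once the ambient space is noncollapsed, and from Bishop--Gromov comparison in $Y$ — set against the excess $(n-k)$-content that $F_\infty$ inherits from the $\bar F_m$.

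The hard part is that the limit $Y$ may itself be collapsed, $\dim Y<n$, so that neither Perelman's propositions (which require a lower bound on the volume of spaces of directions) nor Proposition \ref{prop:low} (which requires a lower volume bound) apply to $Y$ directly. This is exactly where Yamaguchi's rescaling technique for collapsing sequences \cite{Y3} is needed: one iterates the rescaling-and-limit step, at each stage choosing the rescaling factor so that the excess $(n-k)$-content of the fiber persists in the limit while the lower curvature bound and the noncriticality radius stay under control; the dimension of the ambient limit drops strictly at each stage that is still collapsed and cannot fall below $n-k$ (the fiber retains positive $(n-k)$-content, again via Proposition \ref{prop:low} in the noncollapsed stratum), so the procedure terminates at a noncollapsed limit, where Perelman's theory together with the coarea inequality closes the argument. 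Carrying out this scale bookkeeping — above all, ensuring that the largeness of $v_{n-k}$ is never destroyed by a rescaling — is the technical core of the proof and where I expect the main difficulty to lie.
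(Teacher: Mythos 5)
Your high-level instincts (contradiction, blow-up rescaling in the spirit of \cite{Y3}, Perelman's theory once the limit is noncollapsed) point in the right direction, but the technical core is missing and the part you do sketch does not work as stated. The decisive device in the actual proof is a \emph{double reverse induction}: on the codimension $k$ of the target and on the dimension $l$ of the collapsed limit. The induction on $k$ is driven by complementing $f_j$ with one more distance function: one takes the averaged distance function $\sigma$ to a spread-out family $\{w_\alpha\}$ (Setting \ref{set:per} transplanted to the limit), and either $f_j$ can be complemented by $|a_x^j\cdot|$ at $x_j$ (Claim \ref{clm:up1}(1)), or $\sigma$ has a strict max point $\hat x_j$ on $F_j^+\cap\bar B(x_j,r)$ and then $(f_j,|\hat x_j\cdot|)$ is noncritical on the spheres around $\hat x_j$ (Subclaim \ref{sbclm:up1}, Claim \ref{clm:up2}); in either case one splits a $\nu$-discrete set into shells and invokes Proposition \ref{prop:up} for $k+1$. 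Your proposal has no analogue of this step, and without it even the noncollapsed case is not ``immediate from Perelman's theory, Proposition \ref{prop:low} and coarea'': the coarea inequality bounds the volume of almost every fiber, and upgrading that to a bound for the \emph{specific} fiber $F_\infty$ requires an almost-continuity statement such as Lemma \ref{lem:conti1}--\ref{lem:conti2}, whose proof in this paper itself uses Proposition \ref{prop:up} --- so your route is circular as sketched. (Proposition \ref{prop:low} gives a lower bound, hence dimensionality, but contributes nothing to the upper bound you need.)

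The rescaling bookkeeping is also off in two concrete ways. First, the monotonicity is backwards: in the correct argument the blow-up scale $d_j$ is the distance to the farthest point where the angle alternative fails, and Claim \ref{clm:up2}(2)(ii) shows the rescaled limit has dimension $\ge l+1$; the ambient limit dimension \emph{increases} at each still-collapsed stage and is capped by $n$, which is exactly why a reverse induction on $l$ terminates --- your ``dimension drops strictly, bounded below by $n-k$'' scheme has no terminating mechanism. Second, your choice of a near-maximizing scale $\nu_m$ does not make the excess $(n-k)$-content visible in the limit: after rescaling by $1/\nu_m$ the $\ge N_mR_m^{n-k}$ unit-separated fiber points are spread over a ball of radius $R_m\to\infty$, and a pigeonhole over balls of fixed radius $r$ yields only about $N_m r^nR_m^{-k}$ points in some such ball, which does not tend to infinity since $k\ge1$; so no fixed ball $B(y,R)$ in the limit carries a contradiction, and the limit fiber $F_\infty$ need not ``inherit'' the excess content at all. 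This is precisely the difficulty you flag as the ``technical core,'' and resolving it is not a matter of bookkeeping within your scheme: the paper circumvents it by never comparing fiber contents across the limit, instead splitting the fiber into an annular part controlled by the $k+1$ induction and a small ball $\bar B(\hat x_j,d_j)$ controlled by the $l+1$ induction applied to $\frac1{d_j}M_j$.
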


\begin{proof}[Proof of the right inequality of Theorem \ref{thm:fbr}(2)]
Let $f:M\to X$ be the map of Theorem \ref{thm:con} and let $p$, $\hat p$, $\varphi$, $\hat \varphi$ be as in Theorem \ref{thm:con}.
Then, by the inequality \eqref{eq:con} and Remark \ref{rem:ncreg}, $\varphi\circ f$ is $(c,\varkappa(\delta),c\ell\delta^2/2)$-noncritical on $B(\hat p,\ell\delta^2/2)$.
Furthermore, the diameter of the fiber $f^{-1}(p)$ is less than $C\mu$, where $\mu\ll\ell\delta^2$ (see Remark \ref{rem:fbr}).
In particular, every point of $f^{-1}(p)$ is a $(c,\varkappa(\delta),c\ell\delta^2/2)$-noncritical point of $\varphi\circ f$.
Thus, rescaling $M$ by the reciprocal of the diameter of $f^{-1}(p)$ and applying Proposition \ref{prop:up}, we obtain the desired estimate.
\end{proof}

\begin{proof}[Proof of Proposition \ref{prop:up}]
We prove it by reverse induction on $k$.
First, consider the base case $k=n$.
The following argument is the same as the second half of the proof of Proposition \ref{prop:open}.
Let $x\in F(\varepsilon,\delta,\rho)$ and let $a_i,w\in M$ be as in Definition \ref{dfn:nc'}.
Suppose there exists $y\in B(x,\rho\delta)$ such that $f(x)=f(y)$ and $x\neq y$.
If $|wx|\ge|wy|$, then by the definition of $(\varepsilon,\delta,\rho)$-noncriticality, we have
\begin{gather*}
\tilde\angle a_iya_{i'}>\pi/2-\varkappa(\delta),\quad\tilde\angle a_iyw>\pi/2+c(\varepsilon),\\
\tilde\angle a_iyx>\pi/2-\varkappa(\delta),\quad\tilde\angle wyx>\pi/2-\varkappa(\delta)
\end{gather*}
for all $1\le i\neq i'\le n$.
This contradicts Lemma \ref{lem:cnt} for $\Sigma_y$.
We also get a contradiction when $|wx|\le|wy|$.
Therefore, the set $F(\varepsilon,\delta,\rho)$ is $\rho\delta$-discrete.
In particular, the cardinality of $F(\varepsilon,\delta,\rho)\cap B(p,D)$ is bounded above by some constant $C(D,\varepsilon, \rho)$ (note that we can take $\delta=c(\varepsilon)$ so that Lemma \ref{lem:cnt} holds in the above argument).

Next, consider the case $k<n$.
We argue by contradiction.
Thus, it is sufficient to prove the following:

\begin{prop}\label{prop:up'}
Suppose a sequence $(M_j,p_j)$ of $n$-dimensional Alexandrov spaces converges to an $l$-dimensional Alexandrov space $(X,p)$ in the pointed Gromov-Hausdorff topology.
Let $f_j:M_j\to\mathbb R^k$ and $F_j:=f_j^{-1}(f_j(p_j))$.
Then, for any positive numbers $D,\varepsilon,\rho$ and $\delta_j\to0$, we have
\[\liminf_{j\to\infty}v_{n-k}(F_j(\varepsilon,\delta_j,\rho)\cap B(p_j,D))<\infty,\]
where $F_j(\varepsilon,\delta_j,\rho)$ denotes the set of all $(\varepsilon,\delta_j,\rho)$-noncritical points of $f_j$ in $F_j$.
\end{prop}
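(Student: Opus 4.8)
\emph{Strategy.} The plan is to argue by contradiction, combining Perelman's complementation machinery (Propositions~\ref{prop:per1}, \ref{prop:per2}) with Yamaguchi's rescaling technique \cite{Y3}. The crucial observation is that the hypothesis $\vol_{n-1}\Sigma_p\ge\varepsilon$ needed for Propositions~\ref{prop:per1} and \ref{prop:per2} fails for the collapsing $M_j$, but is automatically satisfied on \emph{any} Alexandrov space when it is regarded with its own dimension; so one should first rescale $M_j$ to a limit space and work there. I would run a double induction: the outer one on the collapsing codimension $n-l$, and the inner one the reverse induction on $k$ already in progress. The base case $k=n$ of the inner induction is the uniformly discrete case treated in the main proof, and the base case $l=n$ of the outer induction is the non-collapsed case, which I settle first.

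\emph{The non-collapsed base case $l=n$.} Here $\vol_n B(p_j,D+1)\to\vol_n B(p,D+1)>0$, so $M_j$ has a uniform local volume lower bound near $p_j$, hence $\vol_{n-1}\Sigma_x\ge c>0$ for $x$ near $p_j$, so Propositions~\ref{prop:per1} and \ref{prop:per2} apply to $f_j$. Since $F_j(\varepsilon,\delta_j,\rho)$ lies in the single fiber $F_j=f_j^{-1}(f_j(p_j))$, I would split $F_j(\varepsilon,\delta_j,\rho)\cap B(p_j,D)$ into the subset where $f_j$ is $(c(\varepsilon),\varkappa(\delta_j))$-complementable and its complement. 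On the complementable part, Proposition~\ref{prop:per2} yields, on that fiber minus one point, a single extra coordinate $h$ making $(f_j,h)$ noncritical in the generalized sense; the $(k+1)$-case of the proposition then bounds $v_{n-k-1}$ of the fibers of $(f_j,h)$, and a Fubini-type estimate using the $c(\varepsilon)$-openness of $h$ along $F_j$ upgrades this to a uniform bound on $v_{n-k}$. On the non-complementable part, Proposition~\ref{prop:per1}(2) shows $f_j$ is injective on the section $S$, so each local neighborhood from Proposition~\ref{prop:per1} contributes at most one such point to $F_j$; covering $B(p_j,D)$ by a controlled number of such neighborhoods bounds the cardinality, hence $v_{n-k}$. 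Adding the two contributions via the subadditivity of $v_{n-k}$ gives $v_{n-k}(F_j(\varepsilon,\delta_j,\rho)\cap B(p_j,D))\le C(D,\varepsilon,\rho)$, contradicting $\liminf=\infty$.

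\emph{The collapsed inductive step $l<n$.} Supposing the $\liminf$ is $\infty$, pass to a subsequence with $v_{n-k}(F_j(\varepsilon,\delta_j,\rho)\cap B(p_j,D))\to\infty$. Choosing scales $\nu_j$ realizing the supremum in $v_{n-k}$, one gets $\nu_j\to0$, because $\beta_\nu$ over a fixed ball in an Alexandrov space with curvature $\ge\kappa$ is bounded by $C(n,\kappa,D,\nu)$ \emph{irrespective of collapse} (a Bishop--Gromov packing bound in which the volume of the large ball cancels). Then, using the subadditivity of $v_{n-k}$ and iterated covering, I would locate $x_j\in F_j(\varepsilon,\delta_j,\rho)$ and radii $r_j\to0$ with $v_{n-k}(F_j(\varepsilon,\delta_j,\rho)\cap B(x_j,r_j))\to\infty$. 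Rescaling $M_j$ by $1/r_j$ and passing to a subsequence, $(\tfrac1{r_j}M_j,x_j)\to(Y,y)$ with $\dim Y=n'$, the rescaled maps $\hat f_j=\tfrac1{r_j}f_j$ are uniformly Lipschitz and converge to $f_\infty\colon Y\to\mathbb R^k$, and the defining angle inequalities of $(\varepsilon,\delta_j,\rho/r_j)$-noncriticality pass to the limit (since $\delta_j\to0$, $\rho/r_j\to\infty$, and comparison angles behave continuously) making $f_\infty$ genuinely noncritical at $y$. Taking $x_j$ and $r_j$ at the critical scale as in \cite{Y3} one arranges $n'>l$. Note that the rescaled noncritical set over $x_j$ is exactly the rescaling of $F_j(\varepsilon,\delta_j,\rho)$, and $v_{n-k}(\hat F_j(\varepsilon,\delta_j,\rho/r_j)\cap B(x_j,1))=r_j^{-(n-k)}v_{n-k}(F_j(\varepsilon,\delta_j,\rho)\cap B(x_j,r_j))\to\infty$. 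If $n'=n$ the rescaled sequence is non-collapsed, and the base case applied to $\hat f_j$ on $B(x_j,1)$ gives a contradiction; if $l<n'<n$ the rescaled sequence $\tfrac1{r_j}M_j\to Y$ has strictly smaller collapsing codimension, so the outer induction hypothesis applied to it forces $\liminf_j v_{n-k}(\hat F_j(\varepsilon,\delta_j,\rho/r_j)\cap B(x_j,1))<\infty$, again a contradiction.

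\emph{Main obstacle.} The delicate part is the rescaling step: one must choose the basepoints and radii so that the local blow-up of $v_{n-k}$ survives passage to the rescaled limit \emph{and} the limit dimension $n'$ is strictly larger than $l$ (so the outer induction genuinely decreases); this is precisely where \cite{Y3} enters, and it is also where one must check that noncriticality, and in the complementable case the auxiliary coordinate, descend to the limit with constants depending only on $\varepsilon$. A secondary technical point, used both in the base case and in the transfer, is the Fubini-type inequality bounding $v_{n-k}$ of a fiber by $v_{n-k-1}$ of its slices under a $c(\varepsilon)$-open auxiliary coordinate, with $j$-independent constants.
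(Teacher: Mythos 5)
Your overall skeleton (reverse induction on the limit dimension coupled with the reverse induction on $k$, plus a Yamaguchi-type rescaling and a slicing argument using $c(\varepsilon)$-openness) is the same as the paper's, and your non-collapsed base case is essentially workable. But there is a genuine gap at the heart of the collapsed step: you choose the basepoints $x_j$ and radii $r_j$ by localizing the blow-up of $v_{n-k}$, and then assert that ``taking $x_j$ and $r_j$ at the critical scale as in \cite{Y3} one arranges $n'>l$.'' These are two different choices of scale, and you need both properties at the same scale: the rescaled localized $v_{n-k}$ must stay unbounded \emph{and} the rescaled limit must have dimension $>l$. Nothing forces this: at a scale selected purely by the measure-theoretic blow-up the rescaled limit can perfectly well still be $l$-dimensional (the fiber can be ``thick'' in directions that collapse much faster than $r_j$), and then neither your base case nor your outer induction hypothesis applies. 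The dimension jump is not an automatic feature of rescaling; in the paper it is the content of Claim \ref{clm:up2}, and it is obtained only for a very specific configuration: the basepoint is replaced by $\hat x_j$, a maximum point of the averaged distance function $\sigma_j=\frac1N\sum_\alpha|w_\alpha^j\cdot|$ on the superlevel set $F_j^+$ (not the fiber --- the concavity of the $g_j^i$ plus membership of both endpoints in $F_j^+$ is what makes the comparison work), the scale $d_j$ is defined as the farthest distance at which the angle condition $\tilde\angle w_\alpha^j y_j\hat x_j>\pi/2+c(\varepsilon)$ fails, and $\dim\hat X\ge l+1$ is then forced by playing the first-variation estimate \eqref{eq:up2} against the packing estimate \eqref{eq:up1}. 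Your proposal treats all of this as a black box, but it is precisely the step that cannot be replaced by the blow-up selection you describe.

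A second, related omission: before any rescaling the paper needs the dichotomy of Claim \ref{clm:up1} --- either $f_j$ can be complemented at $x_j$ by a distance coordinate coming from a limit point $a_x$ (in which case the inner induction on $k$ finishes, with no rescaling), or $\sigma$ has a strict maximum at $x$ along $F^+$ with definite slope $c(\varepsilon)$; only the second alternative feeds into the rescaling claim, and it is also what guarantees $\hat x_j\to x$ and that $(f_j,|\hat x_j\cdot|)$ is uniformly noncritical on the annuli $F_j\cap\partial B(\hat x_j,s)$, which is how the region outside scale $d_j$ is controlled. Your collapsed step skips this dichotomy entirely, and consequently has no mechanism for bounding the part of $F_j(\varepsilon,\delta_j,\rho)$ lying between the rescaling scale and the fixed scale $r$; the paper does not need the blow-up to localize at scale $d_j$ precisely because it proves a uniform bound there by combining the annulus estimate (Proposition \ref{prop:up} for $k+1$) with the small-ball estimate (Proposition \ref{prop:up'} for $l'\ge l+1$ applied to $\frac1{d_j}M_j$). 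Your use of Propositions \ref{prop:per1} and \ref{prop:per2} in the base case is a reasonable alternative route there, but in the collapsed case the analogous structure (Claims \ref{clm:up1}, \ref{clm:up2} and Subclaim \ref{sbclm:up1}) has to be built by hand on the limit space, and that construction is the missing core of the proof.
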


We prove it by reverse induction on $l$, the dimension of the limit space.
Note that $k\le l\le n$ (the left inequality follows from Lemma \ref{lem:cnt}; in particular, $l\ge1$).
By the compactness of the limit set of $F_j(\varepsilon,\delta_j,\rho)\cap B(p_j,D)$, the proof is reduced to the following local version:

\begin{lem}\label{lem:up}
Suppose $x_j\in F(\varepsilon,\delta_j,\rho)\cap B(p_j,D)$ converges to $x\in\bar B(p,D)$.
Then, there exists $d>0$ (independent of $j$) such that
\[\liminf_{j\to\infty}v_{n-k}(F_j(\varepsilon,\delta_j,\rho)\cap B(x_j,d))<\infty.\]
\end{lem}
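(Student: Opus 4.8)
The plan is to prove Lemma \ref{lem:up} within the reverse induction on $l=\dim X$ already underway for Proposition \ref{prop:up'}, distinguishing whether the sequence collapses near $x$. As a preliminary reduction I would upgrade the noncriticality of $f_j$ at the single point $x_j$ to a uniform noncriticality on a fixed ball. Let $a_i^j$ and $w^j$ be the points from Definition \ref{dfn:nc'} witnessing $x_j\in F_j(\varepsilon,\delta_j,\rho)$; one may assume, as in Lemma \ref{lem:low}, that $|a_i^jx_j|$ and $|w^jx_j|$ stay in $[\rho,\bar D]$. Since comparison angles depend continuously on the side lengths, and these change by at most $|yx_j|$ as $y$ ranges over $B(x_j,d)$, for $d<c(\varepsilon)\rho$ and $j$ large every $y\in B(x_j,d)$ satisfies $\tilde\angle a_i^jya_{i'}^j>\pi/2-c(\varepsilon)$ and $\tilde\angle a_i^jyw^j>\pi/2+\varepsilon/2$; hence $f_j$ is noncritical at every point of $B(x_j,d)$ with parameters depending only on $\varepsilon$, $\rho$, $d$ (the second one $\ll\varepsilon$), and is $c(\varepsilon)$-open and $C$-Lipschitz on $B(x_j,c(\varepsilon)d)$ by Proposition \ref{prop:open}. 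This alone does not bound the packing of the fiber, since $c(\varepsilon)$-open $C$-Lipschitz maps can have ``wild'' fibers; the point of the rest is to exclude this using the Alexandrov structure.

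In the non-collapsing case $\dim_xX=n$ (so $l=n$) I would argue as in the proof of Proposition \ref{prop:low}: using $\vol_nB(x_j,d)\ge v_0>0$, the distance functions defining $f_j$ can be completed, at points of $F_j\cap B(x_j,d)$ outside an exceptional set of dimension $\le n-k-1$ (cf.\ Remark \ref{rem:recti} and \cite[10.6]{BGP}), by further distance functions $|a_{k+1}^j\cdot|,\dots,|a_n^j\cdot|$ to a noncritical map $(f_j,h_j)$ with $k'=n$, which by Proposition \ref{prop:open} is a bi-Lipschitz homeomorphism of a fixed-size ball onto an open subset of $\mathbb R^n$; restricting to $F_j$ shows that $F_j$ is, near each such point, bi-Lipschitz to a ball in $\mathbb R^{n-k}$. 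Covering $F_j\cap B(x_j,d)$ by finitely many such charts — the number being bounded because $B(x_j,d)$ is non-collapsed — then gives a uniform bound on $v_{n-k}(F_j(\varepsilon,\delta_j,\rho)\cap B(x_j,d))$.

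In the collapsing case $\dim_xX=l<n$ I would invoke Yamaguchi's rescaling technique \cite{Y3} at $x$: there are scales $r_j\to0$ (chosen uniform in the point and maximal) such that $(\tfrac1{r_j}M_j,x_j)$ subconverges to a pointed Alexandrov space of dimension $\ge l+1$, while at scales in $(r_j,r_0)$ the space still looks $\le l$-dimensional. On one hand, the limit $g_X=(|a_1\cdot|,\dots,|a_k\cdot|)\colon X\to\mathbb R^k$ of the defining distance maps is genuinely noncritical on a neighborhood of $x$ (by the limiting inequalities $\tilde\angle a_ixa_{i'}\ge\pi/2$, $\tilde\angle a_ixw\ge\pi/2+\varepsilon$), so its fiber $\Phi=g_X^{-1}(g_X(x))$ is an MCS-space of dimension $l-k$ with locally finite $v_{l-k}$, and $F_j\cap B(x_j,r_0)$ stays close to a lift of $\Phi$; hence $F_j(\varepsilon,\delta_j,\rho)\cap B(x_j,r_0)$ is covered by at most $C(r_0/r_j)^{l-k}$ balls of radius $r_j$. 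On the other hand, on each such ball, rescaling by $1/r_j$ turns $f_j-f_j(x_j)$ into an $(\varepsilon,\delta_j,\rho/r_j)$-noncritical (hence $(\varepsilon,\delta_j,\rho)$-noncritical) map on a space whose limit has dimension $\ge l+1$, so the reverse-induction hypothesis for Proposition \ref{prop:up'} gives, along a subsequence, $v_{n-k}(F_j(\varepsilon,\delta_j,\rho)\cap B(z,2r_j))\le Cr_j^{\,n-k}$. Summing over the cover and using subadditivity of $v_{n-k}$ yields
\[v_{n-k}\bigl(F_j(\varepsilon,\delta_j,\rho)\cap B(x_j,r_0)\bigr)\le C\Bigl(\tfrac{r_0}{r_j}\Bigr)^{l-k}r_j^{\,n-k}=C\,r_0^{\,l-k}\,r_j^{\,n-l}\le C\,r_0^{\,n-k},\]
since $r_j\le r_0$ and $n-l\ge1$, so $d=r_0$ works (passing to the relevant subsequence is legitimate, as only $\liminf$ is claimed).

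The main obstacle is the covering estimate in the collapsing case: a crude count of $r_j$-balls gives $(r_0/r_j)^l$, which is far too large when $n-k<l$, and the improvement to $(r_0/r_j)^{l-k}$ must genuinely exploit that $F_j$ ``shadows'' the lower-dimensional limit fiber $\Phi$. Making this precise couples three small parameters — the Gromov--Hausdorff scale between $M_j$ and $X$, the decay rate of $\delta_j$, and the rescaling scale $r_j$ — because the thickening of the lift of $\Phi$ by which $F_j$ fails to sit exactly on it must be absorbed into $r_j$; reconciling this with Yamaguchi's choice of $r_j$ (possibly by a more refined choice of scale) is the delicate part. The remaining work is bookkeeping: the nested reverse inductions, the several subsequences (rescaled limits, cofinal index sets, the exceptional set in the non-collapsing case), and the verification that ``fibers of a fixed-scale noncritical map in a non-collapsed Alexandrov space have uniformly bounded $(n-k)$-dimensional packing,'' which underlies the base case.
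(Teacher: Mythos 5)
Both halves of your argument have genuine gaps. In the non-collapsing case you cover $F_j\cap B(x_j,d)$ by bi-Lipschitz charts away from an exceptional set of Hausdorff dimension $\le n-k-1$, citing Remark \ref{rem:recti}. This is circular: Remark \ref{rem:recti} (and Lemma \ref{lem:conti2}, on which it leans) is proved in the paper \emph{using} Proposition \ref{prop:up}, which is exactly what Lemma \ref{lem:up} is in the middle of establishing. Even granting it, a Hausdorff-dimension bound on the exceptional set does not control the packing quantity $v_{n-k}$ that the lemma requires --- a set of Hausdorff dimension $\le n-k-1$ can still carry arbitrarily many $\nu$-discrete points at every scale $\nu$ --- and the size of your charts degenerates as one approaches the exceptional set (strainer lengths shrink), so ``finitely many charts of definite size'' is not available. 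In the collapsing case, the covering estimate by $C(r_0/r_j)^{l-k}$ balls of radius $r_j$, which you yourself flag as unresolved, is the actual crux, not bookkeeping: $F_j$ shadows the limit fiber $\Phi$ only at the Gromov--Hausdorff scale between $M_j$ and $X$, which has no a priori relation to $r_j$; you would further need one subsequence working simultaneously for a $j$-dependent family of rescaled balls, and finiteness of $v_{l-k}(\Phi)$ is Proposition \ref{prop:up} for the pair $(k,l)$, which lies outside the stated double induction (reverse on $k$, and on $l$ for fixed $k$).

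The paper's proof is structured precisely to avoid any such covering. Since one may assume $\vol_{l-1}\Sigma_x\ge\varepsilon$, it builds the averaged distance function $\sigma_j$ from an $\omega$-net of $N\ge L/\omega^{l-1}$ directions near $w'_x$, and Claim \ref{clm:up1} gives a dichotomy: either $f_j$ is complementable at $x_j$ by some $|a_x^j\cdot|$, and the reverse induction on $k$ finishes via the slicing-by-distance argument; or $\sigma$ has a strict maximum on $F^+\cap\bar B(x,r)$ at $x$, in which case one passes to the single point $\hat x_j$ maximizing $\sigma_j$ on $F_j^+\cap\bar B(x_j,r)$ and applies the rescaling dichotomy of Claim \ref{clm:up2} \emph{at that one point}: either $(f_j,|\hat x_j\cdot|)$ is noncritical on the whole punctured fiber (annulus decomposition plus induction on $k$; this also serves as the $l=n$ base case, rather than your Proposition \ref{prop:low}-style completion of strainers), or there are $d_j\to0$ so that this holds outside $\bar B(\hat x_j,d_j)$ while every limit of $(\frac1{d_j}M_j,\hat x_j)$ has dimension $\ge l+1$, and the core is a \emph{single} ball of radius $d_j$ handled directly by the reverse induction on $l$. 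Only one rescaled ball ever appears, so no shadowing estimate or multi-ball subsequence extraction is needed; to repair your proposal you would essentially have to reproduce this single-point maximization and dichotomy.
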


Let $x_j$ and $x$ be as above.
Since it is sufficient to prove Proposition \ref{prop:up'} for smaller $\varepsilon$, we may assume $\vol_{l-1}\Sigma_x\ge\varepsilon$.
By the $(\varepsilon,\delta_j,\rho)$-noncriticality of $f_j$ at $x_j$, there exist $g_j=(|a_1^j\cdot|,\dots,|a_k^j\cdot|):M_j\to\mathbb R^k$ and $w_j\in M_j$ satisfying the conditions of Definition \ref{dfn:nc'}.
Passing to a subsequence, we may assume that $w_j$ converges to $w\in X$.
The following argument is similar to Setting \ref{set:per}.
Let $0<\omega<\delta$, where $\delta\ll\varepsilon$ is different from $\delta_j$.
Since $\vol_{l-1}\Sigma_x\ge\varepsilon$, there exists an $\omega$-discrete set $\{\xi_\alpha\}_{\alpha=1}^N$ in the $\varepsilon/100$-neighborhood of $w'_x\in \Sigma_x$ such that $N\ge L/\omega^{l-1}$, where $L=c(\varepsilon)$.
Let $w_\alpha\in M$ be a point near $x$ in the direction $\xi_\alpha$ and $w_\alpha^j\in M_j$ a lift of $w_\alpha$ (i.e.\ $w_\alpha^j\to w_\alpha$).
Then, we have
\[\tilde\angle w_\alpha xw_{\alpha'}>\omega,\quad\liminf_{j\to\infty}\tilde\angle a_i^jx_jw_\alpha^j>\pi/2+\varepsilon/2\]
for all $1\le\alpha\neq\alpha'\le N$ and $1\le i\le k$ (note that $|a_i^jx_j|$ may go to infinity in the second inequality).
Let $0<r<\delta\min_\alpha\{\rho,|xw_\alpha|\}$ be sufficiently small.
Then, for any $y\in B(x,2r)$ and $y_j\in B(x_j,2r)$, we have
\[\tilde\angle w_\alpha yw_{\alpha'}>\omega,\quad\liminf_{j\to\infty}\tilde\angle a_i^jy_jw_\alpha^j>\pi/2+\varepsilon/3\]
for all $1\le\alpha\neq\alpha'\le N$ and $1\le i\le k$.
Define $\sigma:X\to\mathbb R$ by
\[\sigma:=\frac1 N\sum_{\alpha=1}^N|w_\alpha\cdot|.\]

The maps $f_j$ are uniformly Lipschitz on $B(x_j,\rho)$ by Definition \ref{dfn:nc'}(1).
Hence, we may assume that the normalized map $f_j(\cdot)-f_j(x_j)$ converges to some $f:B(x,\rho)\to\mathbb R^k$.
Note that $g_j(\cdot)-g_j(x_j)$ also converges to $f$ by Definition \ref{dfn:nc'}(1) since $\delta_j\to0$.
Set
\[F^+:=\{y\in B(x,\rho)\mid f^i(y)\ge f^i(x)=0,\ 1\le i\le k\},\]
where $f^i$ denotes the $i$-th component of $f$.

\begin{clm}[cf.\ {\cite[3.4 Assertion 1]{P1}}]\label{clm:up1}
One of the following holds:
\begin{enumerate}
\item There exist $a_x\in X$ and $\rho_x>0$ such that $(f_j,|a_x^j\cdot|)$ is $(c(\varepsilon),\varkappa(\delta),\rho_x)$-noncritical at $x_j$ for sufficiently large $j$, where $a_x^j\in M_j$ is a lift of $a_x$.
\item The restriction of $\sigma$ to $F^+\cap\bar B(x,r)$ has a strict maximum value at $x$.
More precisely, we have
\[\sigma(x)\ge\sigma(y)+c(\varepsilon)|xy|\]
for any $y\in F^+\cap\bar B(x,r)$.
\end{enumerate}
\end{clm}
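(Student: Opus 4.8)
The plan is to prove the dichotomy exactly as in Lemma \ref{lem:per1}: assuming that (2) fails, I would produce the pair $(a_x,\rho_x)$ witnessing (1). So suppose there is a point $y\in F^+\cap\bar B(x,r)$ with $\sigma(x)<\sigma(y)+c(\varepsilon)|xy|$; since this is false for $y=x$, necessarily $y\ne x$, and I fix an initial direction $\eta\in\Sigma_x$ of a shortest path from $x$ to $y$. I would set $a_x:=y$, $\rho_x:=\tfrac12\min\{\rho,|xy|,\min_\alpha|xw_\alpha|\}$, take a lift $a_x^j\in M_j$ of $y$, and verify the three conditions of Definition \ref{dfn:nc'} for $(f_j,|a_x^j\cdot|)$ at $x_j$ for all large $j$. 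Condition (1) is immediate because the last coordinate cancels, reducing it to Definition \ref{dfn:nc'}(1) for $f_j$ and $g_j$, which holds with constant $\delta_j<\varkappa(\delta)$ on $B(x_j,\rho)\supset B(x_j,\rho_x)$. For condition (2) the only new constraint is $\tilde\angle a_i^jx_ja_x^j>\pi/2-\varkappa(\delta)$; since $y\in F^+$ one has $|a_i^jy_j|-|a_i^jx_j|\to f^i(y)\ge 0$, and feeding this into the law of cosines in the $\kappa$-plane together with $|x_jy_j|\to|xy|\le r<\delta\rho<\delta|a_i^jx_j|$ gives the bound for large $j$; the remaining inequalities ($|a_i^jx_j|>\rho>\rho_x$, $\tilde\angle a_i^jx_ja_{i'}^j>\pi/2-\delta_j$) are part of the hypothesis that $f_j$ is $(\varepsilon,\delta_j,\rho)$-noncritical at $x_j$.

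The main step is condition (3): I must find an index $\alpha_0$ so that $w:=w_{\alpha_0}^j$ serves as the obtuse point for the pair at $x_j$. For every $\alpha$ one already has $|w_\alpha^jx_j|\to|xw_\alpha|>\rho_x$ and, by the estimate above applied with $y_j=x_j$, $\tilde\angle a_i^jx_jw_\alpha^j>\pi/2+\varepsilon/3$ for large $j$, so only $\tilde\angle a_x^jx_jw_{\alpha_0}^j>\pi/2+c(\varepsilon)$ remains to be arranged. Fix constants $a\ll b\ll L=c(\varepsilon)$, where $a$ denotes the $c(\varepsilon)$ occurring in the failed inequality (2). Since $r<\delta\min_\alpha|xw_\alpha|$ and $r$ is taken sufficiently small, the first variation formula for each $|w_\alpha\cdot|$ along the shortest path $xy$ gives $\bigl||w_\alpha y|-|w_\alpha x|+|xy|\cos\angle((w_\alpha)'_x,\eta)\bigr|<\varkappa(\delta)|xy|$ uniformly in $\alpha$. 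Averaging over $\alpha$ and using that (2) fails, the mean value of $\cos\angle((w_\alpha)'_x,\eta)$ over $\alpha$ is $<3a$. On the other hand, Lemma \ref{lem:vol} applied to $\{\eta\}\subset\Sigma_x$ (which has dimension $l-1$) shows that the number of $\alpha$ with $\bigl|\angle((w_\alpha)'_x,\eta)-\pi/2\bigr|\le b$ is at most $Cb/\omega^{l-1}\ll N$. Hence some $\alpha_0$ satisfies $\angle((w_{\alpha_0})'_x,\eta)>\pi/2+b$: otherwise all of the remaining, majority, indices $\alpha$ would have $\angle((w_\alpha)'_x,\eta)<\pi/2-b$, forcing the mean of $\cos\angle((w_\alpha)'_x,\eta)$ to exceed $c(b)>3a$, a contradiction. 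Plugging $\angle((w_{\alpha_0})'_x,\eta)>\pi/2+b$ back into the first variation estimate and the law of cosines in the $\kappa$-plane (again using $|xy|<\delta|xw_{\alpha_0}|$) gives $\tilde\angle yxw_{\alpha_0}>\pi/2+c(\varepsilon)$ in $X$, and this passes to $\tilde\angle a_x^jx_jw_{\alpha_0}^j>\pi/2+c(\varepsilon)$ for large $j$ since $a_x^j\to y$, $w_{\alpha_0}^j\to w_{\alpha_0}$ and $x_j\to x$. Thus $(f_j,|a_x^j\cdot|)$ is $(c(\varepsilon),\varkappa(\delta),\rho_x)$-noncritical at $x_j$ for all large $j$, which is (1).

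The delicate points --- the same as in \cite[\S3]{P1} --- will be the uniform control of the first variation error over the finitely many auxiliary points $w_\alpha$, which is precisely why $r$ is chosen smaller than $\delta\min_\alpha|xw_\alpha|$ and sufficiently small, and the fact that the points $a_i^j$ may run off to infinity, so that every angle estimate involving $a_i^j$ must be carried out via the law of cosines using only $|a_i^jx_j|>\rho$ and the convergence $g_j-g_j(x_j)\to f$, rather than by extracting a limit of $a_i^j$. I do not expect these to cause genuine difficulty; the conceptual content is just the dichotomy between $\sigma$ having a strict interior maximum on $F^+$ near $x$ and the existence of a further noncritical coordinate direction.
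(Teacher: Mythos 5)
Your overall strategy is the paper's: negate (2), use the failure inequality to bound the average of $\cos\angle w_\alpha xy$ by $Ca$, invoke Lemma \ref{lem:vol} to find $\alpha_0$ with $\angle w_{\alpha_0}xy>\pi/2+b$, and set $a_x:=y$ (the verification of conditions (1) and (2) of Definition \ref{dfn:nc'}, using that $y\in F^+$ and that the last coordinate of $(f_j,|a_x^j\cdot|)$ is its own model, is fine and matches the paper). However, there is a genuine gap at the final step, where you pass from the obtuse \emph{angle} at $x$ to the obtuse \emph{comparison} angle $\tilde\angle a_x^jx_jw$ required by Definition \ref{dfn:nc'}(3) with $w=w_{\alpha_0}^j$. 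The two-sided ``uniform first variation estimate'' $\bigl||w_\alpha y|-|w_\alpha x|+|xy|\cos\angle((w_\alpha)'_x,\eta)\bigr|<\varkappa(\delta)|xy|$ is false at the macroscopic scale $|xy|\sim r$ under only a lower curvature bound: Alexandrov convexity gives $\tilde\angle w_\alpha xy\le\angle w_\alpha xy$, i.e.\ only the upper bound $|w_\alpha y|\le|w_\alpha x|-|xy|\cos\angle+\varkappa(\delta)|xy|$ (this valid half is all you need for the averaging step, so that part survives), but the reverse inequality — a lower bound on $|w_{\alpha_0}y|$ from the initial angle — does not hold; the initial angle only controls the derivative of $t\mapsto|w_{\alpha_0}\gamma(t)|$ at $t=0$, and this derivative can drop immediately (think of two geodesics on a small round sphere, or on a doubled convex region, making a large angle at $x$ while their endpoints are close). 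Hence $\angle w_{\alpha_0}xy>\pi/2+b$ does not yield $\tilde\angle yxw_{\alpha_0}>\pi/2+c(\varepsilon)$, and condition (3) for the pair is not established by your argument.

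The paper's proof closes exactly this gap by not using $w_{\alpha_0}$ itself: it takes a point $w_x$ on a shortest path $xw_{\alpha_0}$ so close to $x$ that $\tilde\angle w_xxy>\pi/2+b$ (this works because, by monotonicity and the first variation formula, $\tilde\angle w_xxy$ increases to the angle as $w_x\to x$ along the path), and then lifts it to a point $w_x^j$ on shortest paths $x_jw_{\alpha_0}^j$. Monotonicity of comparison angles along $x_jw_{\alpha_0}^j$ also guarantees $\tilde\angle a_i^jx_jw_x^j\ge\tilde\angle a_i^jx_jw_{\alpha_0}^j>\pi/2+\varepsilon/3$, so all conditions of Definition \ref{dfn:nc'} hold with this $w_x^j$ (with $\rho_x$ chosen smaller than $|xw_x|$, which is why $\rho_x$ depends on the point). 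If you replace your final step by this device, your argument becomes the paper's proof.
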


\begin{proof}
The proof is the same as that of \cite[3.4 Assertion 1]{P1} (see Lemma \ref{lem:per1}).
Take $a=c(\varepsilon)$ and $b=c(\varepsilon)$ such that $a\ll b\ll L$.
Suppose that (2) does not hold for this $a$.
Let $y\in F^+\cap\bar B(x,r)$ be such that $\sigma(x)<\sigma(y)+a|xy|$.
Then, we have $\frac1 N\sum_{\alpha=1}^N\cos\angle w_\alpha xy<2a$ (where $\angle w_\alpha xy$ is the minimum angle between $xw_\alpha$ and $xy$).
On the other hand, $N\ge L/\omega^{l-1}$ and the number of $\alpha$ such that $|\angle w_\alpha xy-\pi/2|\le b$ is less than $Cb/\omega^{l-1}$ by Lemma \ref{lem:vol}.
Hence, there exists $\alpha$ such that $\angle w_\alpha xy>\pi/2+b$; otherwise, $\frac1 N\sum_{\alpha=1}^N\cos\angle w_\alpha xy>c(b)>2a$, a contradiction.
Choose a point $w_x$ on a shortest path $xw_\alpha$ so close to $x$ that $\tilde\angle w_xxy>\pi/2+b$.
Take a lift $w_x^j$ of $w_x$ on a shortest path $x_jw_\alpha^j$ (we may assume $x_jw_\alpha^j$ converges to $xw_\alpha$).
Set $a_x:=y$ and let $a_x^j\in M_j$ be a lift of $a_x$.
Then, we have
\[\tilde\angle a_i^jx_ja_x^j>\pi/2-\varkappa(\delta),\quad\tilde\angle a_x^jx_jw_x^j>\pi/2+b\]
for all $1\le i\le k$ and sufficiently large $j$ (the first inequality follows from $a_x\in F^+$).
The other conditions of noncriticality of $(f_j,|a_x^j\cdot|)$ at $x_j$ are obviously satisfied.
\end{proof}

We first prove Lemma \ref{lem:up} in the case of Claim \ref{clm:up1}(1).
Recall that Proposition \ref{prop:up} for $k+1$ holds by the induction hypothesis.
Let $0<d<\rho_x\delta$.
Then, $(f_j,|a_x^j\cdot|)$ is $(c(\varepsilon),\varkappa(\delta),\rho_x/2)$-noncritical on $B(x_j,2d)$.
In particular, it is $c(\varepsilon)$-open on $B(x_j,2d)$.
Given a $\nu$-discrete set in $F_j\cap B(x_j,d)$, split it into $2d/\delta\nu$ classes so that the difference of the distances from $a_x^j$ to any two points in the same class is no more than $\delta\nu$.
Then, for each class, we can take a corresponding $\nu/2$-discrete set in a fiber of $(f_j,|a_x^j\cdot|)$ by the $c(\varepsilon)$-openness.
Proposition \ref{prop:up} for $k+1$ implies that the number of such $\nu/2$-discrete points is less than $C(\varepsilon)\nu^{-(n-k-1)}$ (we have to apply Proposition \ref{prop:up} to the rescaled space $\rho_x^{-1}M_j$ so that the choice of $\delta$ depends only on $\varepsilon$ because $\rho_x$ depends on $\delta$).
Thus, Lemma \ref{lem:up} follows.

From now on, we consider the case of Claim \ref{clm:up1}(2).

\begin{sbclm}[cf.\ {\cite[3.4 Assertion 2]{P1}}]\label{sbclm:up1}
In the case of Claim \ref{clm:up1}(2), for any $y\in F^+\cap B(x,r)\setminus\{x\}$, there exists $\alpha$ such that $\tilde\angle w_\alpha yx>\pi/2+c(\varepsilon)$.
\end{sbclm}

\begin{proof}
Take $a=c(\varepsilon)\ll L$.
It suffices to show that $\angle w_\alpha xy\le\pi/2-a$ for some $\alpha$ (where $\angle w_\alpha xy$ is the minimum angle).
Suppose that $\angle w_\alpha xy>\pi/2-a$ for all $\alpha$.
Then, by Lemma \ref{lem:vol}, the number of $\alpha$ such that $\angle w_\alpha xy\le\pi/2+a$ is less than $Ca/\omega^{l-1}$.
Since $N\ge L/\omega^{l-1}$ and $a\ll L$, we have
\[\sigma'(y'_x)\ge\frac1 N\sum_{\alpha=1}^N-\cos\angle w_\alpha xy>c(a).\]

Recall that $f$ is the limit of $g_j(\cdot)-g_j(x_j)$.
Since $g_j^i$, the $i$-th component of $g_j$, are uniformly $\lambda$-concave near $x_j$, so is $f^i$ near $x$ (where $\lambda>0$ depends only on $\kappa$ and $\rho$).
Hence, for any point $z$ (close to $x$) on a shortest path $xy$, we have
\[f^i(z)\ge f^i(x)-\frac\lambda2|xz||yz|\ge f^i(x)-\frac{\lambda r}2|xz|\]
since $y\in F^+$.
Furthermore, $f$ is $c(\varepsilon)$-open near $x$ since it is a limit of $c(\varepsilon)$-maps.
Therefore, we can find a point $\hat z\in F^+\cap B(x,r)$ such that $c(\varepsilon)|\hat zz|\le(\lambda r/2)|xz|$.
Since $x$ is the maximum point of $\sigma$ on $F^+\cap\bar B(x,r)$, we have
\[\sigma(x)\ge\sigma(\hat z)\ge\sigma(z)-|\hat zz|\ge\sigma(z)-c(\varepsilon)^{-1}\frac{\lambda r}2|xz|.\]
Since $z$ is arbitrary on $xy$, we obtain $\sigma'(y'_x)\le c(\varepsilon)^{-1}\lambda r/2$.
This contradicts $\sigma'(y'_x)>c(a)$ because $r<\rho\delta\ll c(\varepsilon)$.
\end{proof}

Define $\sigma_j:M_j\to\mathbb R$ by
\[\sigma_j:=\frac1 N\sum_{\alpha=1}^N|w_\alpha^j\cdot|\]
and set
\[F_j^+:=\{y_j\in B(x_j,\rho)\mid f_j^i(y_j)\ge f_j^i(x_j),\ 1\le i\le k\},\]
where $f_j^i$ denotes the $i$-th component of $f_j$.
Let $\hat x_j$ be a maximum point of $\sigma_j$ on $F_j^+\cap\bar B(x_j,r)$.
Then, Claim \ref{clm:up1}(2) implies that $\hat x_j$ converges to $x$.

\begin{sbclm}[cf.\ {\cite[3.9]{P2}}]\label{sbclm:up2}
Indeed, $\hat x_j\in F_j$ for large $j$.
\end{sbclm}

\begin{proof}
Fix large $j$ and suppose the contrary.
Then, $f_j^{i_0}(\hat x_j)>f_j^{i_0}(x_j)$ for some $i_0$.
Let $A_i'$ (resp.\ $W'$) be the set of all directions of shortest paths from $\hat x_j$ to $a_i^j$ (resp.\ $w_\alpha^j$ for all $\alpha$).
Then, by Lemma \ref{lem:dir}(1), there exists $\xi\in\Sigma_{\hat x_j}$ such that 
\[\angle(\xi,A_{i_0}')<\pi/2-c(\varepsilon),\quad\angle(\xi,A_i')=\pi/2,\quad\angle(\xi,W')>\pi/2+c(\varepsilon)\]
for all $i\neq i_0$.
Let $y\in M_j$ be a point near $\hat x_j$ in the direction $\xi$ such that
\[f_j^{i_0}(y)>f_j^{i_0}(x_j),\quad|g_j^i(y)-g_j^i(\hat x_j)|<\delta|y\hat x_j|,\quad\sigma_j(y)>\sigma_j(\hat x_j)+c(\varepsilon)|y\hat x_j|\]
for all $i\neq i_0$.
In particular, the middle inequality above and the inequality of Definition \ref{dfn:nc'}(1) imply that
\[f_j^i(y)\ge f_j^i(\hat x_j)-(\delta+\delta_j)|y\hat x_j|\ge f_j^i(x_j)-2\delta|y\hat x_j|\]
for all $i\neq i_0$.
Therefore, by the $c(\varepsilon)$-openness of $f_j$, we can find a point $\hat y\in F_j^+\cap B(x_j,r)$ such that $c(\varepsilon)|\hat yy|\le2\delta|y\hat x_j|$.
Then, we have
\[\sigma_j(\hat y)\ge\sigma_j(y)-|\hat yy|\ge\sigma_j(\hat x_j)+(c(\varepsilon)-2\delta c(\varepsilon)^{-1})|y\hat x_j|\]
This contradicts the choice of $\hat x_j$ because $\delta\ll c(\varepsilon)$.
\end{proof}

The above observation is important, but actually will not be used below.
Now, we prove the following rescaling theorem (compare Subclaim \ref{sbclm:up1}):

\begin{clm}[cf.\ {\cite[3.2]{Y3}}]\label{clm:up2}
In the above situation, one of the following holds:
\begin{enumerate}
\item There exists a subsequence of $\{j\}$ such that for any $y_j\in F_j^+\cap B(\hat x_j,r)\setminus\{\hat x_j\}$, there is $\alpha$ such that $\tilde\angle w_\alpha^jy_j\hat x_j>\pi/2+c(\varepsilon)$.
\item There exists a sequence $d_j\to 0$ of positive numbers such that
\begin{enumerate}
\item for any $y_j\in F_j^+\cap B(\hat x_j,r)\setminus\bar B(\hat x_j,d_j)$, there is $\alpha$ such that $\tilde\angle w_\alpha^jy_j\hat x_j>\pi/2+c(\varepsilon)$;
\item for any limit $(\hat X,\hat x)$ of the rescaled spaces $(\frac1{d_j}M_j,\hat x_j)$, we have $\dim\hat X\ge l+1$, where $l=\dim X$.
\end{enumerate}
\end{enumerate}
In particular, if $l=n$, then (1) holds.
Note that when (1) (resp.\ (2)) holds, $(f_j,|\hat x_j\cdot|)$ is $(c(\varepsilon),\varkappa(\delta),s/2)$-noncritical on $F_j\cap\partial B(\hat x_j,s)$ for any $s\in(0,r)$ (resp.\ $s\in(d_j,r)$).
\end{clm}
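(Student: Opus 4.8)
The plan is to transplant Subclaim~\ref{sbclm:up1} from the limit $X$ to the spaces $M_j$ by locating, at $\hat x_j$, the critical scale below which the angle condition breaks down --- this is the rescaling scheme of \cite[3.2]{Y3}. Fix a constant $c_\ast=c(\varepsilon)$ strictly smaller than the one produced by Subclaim~\ref{sbclm:up1}, and for each $j$ put
\[
  d_j:=\sup\bigl\{\,|\hat x_jy_j|\ :\ y_j\in F_j^+\cap B(\hat x_j,r),\ \tilde\angle w_\alpha^jy_j\hat x_j\le\pi/2+c_\ast\text{ for all }\alpha\,\bigr\},
\]
with $d_j:=0$ if no such $y_j$ exists. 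Then (2)(a) holds by construction, and $d_j=0$ is exactly conclusion~(1). First I would show $d_j\to0$: after passing to a subsequence $d_j\to d_\infty$, if $d_\infty>0$ choose bad points $y_j$ with $|\hat x_jy_j|>d_j-1/j$; their limit $y$ lies in $F^+\cap\bar B(x,r)\setminus\{x\}$ (recall $\hat x_j\to x$, $F_j^+\to F^+$, $f_j(\cdot)-f_j(\hat x_j)\to f$, and $w_\alpha^j\to w_\alpha$ in $X$), and by continuity of comparison angles in the sidelengths $\tilde\angle w_\alpha yx\le\pi/2+c_\ast$ for every $\alpha$, contradicting Subclaim~\ref{sbclm:up1} (whose hypothesis, Claim~\ref{clm:up1}(2), gives the required gap on the \emph{closed} ball). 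Hence $d_\infty=0$; if $d_j=0$ along a subsequence we are in case~(1), otherwise $d_j>0$ with $d_j\to0$ and we must prove (2)(b). If $l=n$, the estimate $\dim\hat X\ge n+1$ below is impossible, so case~(2) cannot occur and (1) holds.

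For (2)(b), let $(\hat X,\hat x)$ be any limit of $(\tfrac1{d_j}M_j,\hat x_j)$; it is an Alexandrov space of curvature $\ge0$ and dimension $\le n$. Since $f_j$ is uniformly Lipschitz and $c(\varepsilon)$-open on a fixed ball about $x_j$, while the components of $g_j$ are distance functions to points at distance $>\rho$ from $\hat x_j$ that are $\lambda$-concave with $\lambda$ scaling to $0$ under the blow-up, the rescaled maps $\tfrac1{d_j}(f_j(\cdot)-f_j(\hat x_j))$ --- which have the same limit as $\tfrac1{d_j}(g_j(\cdot)-g_j(\hat x_j))$ --- subconverge on each ball to a $c(\varepsilon)$-open map $\hat f\colon\hat X\to\mathbb R^k$ with concave components; likewise $\tfrac1{d_j}(\sigma_j(\cdot)-\sigma_j(\hat x_j))$ subconverges to a concave $\hat\sigma$, $F_j^+$ rescales to a region $\hat F^+\ni\hat x$, and $\hat x$ maximizes $\hat\sigma$ over $\hat F^+$ (because $\hat x_j$ maximizes $\sigma_j$ over $F_j^+\cap\bar B(x_j,r)$ and $r/d_j\to\infty$). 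Finally the rays $\hat x_jw_\alpha^j$ subconverge to rays $\hat x\hat\eta_\alpha$, and by the monotonicity of comparison angles (the angle at a vertex decreases as the far endpoints recede, and equals its value with them pulled back to the vertex) $\angle_{\hat X}(\hat\eta_\alpha,\hat\eta_{\alpha'})\ge\lim_j\tilde\angle w_\alpha^j\hat x_jw_{\alpha'}^j>\omega$; thus $\Sigma_{\hat x}$ contains $N\ge L/\omega^{l-1}$ $\omega$-separated points, and Lemma~\ref{lem:vol} (applied to all of $\Sigma_{\hat x}$) forces $\dim\hat X\ge l$.

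Now suppose $\dim\hat X=l$. By the same monotonicity the bad points $y_j$ (taken with $|\hat x_jy_j|/d_j\to1$) converge to a point $\hat y\in\hat F^+$ with $|\hat x\hat y|=1$, and a thin-triangle estimate (valid since $|w_\alpha^j\hat x_j|\gg d_j$) turns $\tilde\angle w_\alpha^jy_j\hat x_j\le\pi/2+c_\ast$ into $\angle_{\hat X}(\hat\eta_\alpha,(\hat y)'_{\hat x})\ge\pi/2-c_\ast$ for every $\alpha$. On the other hand, I would re-run the proof of Subclaim~\ref{sbclm:up1} inside $\hat X$: concavity of the $\hat f^i$ keeps the geodesic $\hat x\hat y$ inside $\hat F^+$, so the maximality of $\hat\sigma$ at $\hat x$ gives $\hat\sigma'\bigl((\hat y)'_{\hat x}\bigr)\le0$; but by Lemma~\ref{lem:vol} at most $Cc_\ast/\omega^{l-1}\ll N$ of the $\hat\eta_\alpha$ make an angle within $c_\ast$ of $\pi/2$ with $(\hat y)'_{\hat x}$, whence $\hat\sigma'\bigl((\hat y)'_{\hat x}\bigr)=\tfrac1N\sum_\alpha-\cos\angle(\hat\eta_\alpha,(\hat y)'_{\hat x})>c(\varepsilon)>0$, a contradiction. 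Therefore $\dim\hat X\ge l+1$.

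For the closing remark, given $y_j\in F_j\cap\partial B(\hat x_j,s)$ --- with $s\in(0,r)$ in case~(1) and $s\in(d_j,r)$ in case~(2) --- take the map $(|a_1^j\cdot|,\dots,|a_k^j\cdot|,|\hat x_j\cdot|)$ and the $w_\alpha^j$ furnished by the angle condition: condition~(1) of Definition~\ref{dfn:nc'} reduces to Definition~\ref{dfn:nc'}(1) for $f_j$ in the first $k$ coordinates and is trivial in the last; condition~(2) holds because $y_j\in F_j\subset F_j^+$ makes $|a_i^jy_j|$ nearly equal to $|a_i^j\hat x_j|$, so $(a_i^j)'_{y_j}$ is almost orthogonal to $(\hat x_j)'_{y_j}$; and condition~(3) follows from $\tilde\angle a_i^jy_jw_\alpha^j>\pi/2+\varepsilon/3$ (from the choice of the $w_\alpha^j$) and $\tilde\angle\hat x_jy_jw_\alpha^j=\tilde\angle w_\alpha^jy_j\hat x_j>\pi/2+c_\ast$. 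I expect the main obstacle to be the rescaling step (second and third paragraphs): the bookkeeping for the blow-up limit when the points $a_i^j$ and $w_\alpha^j$ escape to infinity --- identifying the components of $\hat f$ with Busemann-type functions, and the monotonicity argument that keeps the ray directions $\hat\eta_\alpha$ genuinely $\omega$-separated in $\Sigma_{\hat x}$ --- since everything else is a direct transcription of arguments already carried out in this section.
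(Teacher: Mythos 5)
Your proposal is correct and follows the paper's strategy: the same critical-scale definition of $d_j$ (sup/farthest bad point), the same appeal to Subclaim \ref{sbclm:up1} to get $d_j\to0$, the same thin-triangle and angle-monotonicity estimates to transport the $\omega$-separation and the near-orthogonality of $(\hat y)'_{\hat x}$ to the blow-up limit, and the same final contradiction combining Lemma \ref{lem:vol} with the maximality of $\sigma$ and the concavity of the $g_j^i$; the $l=n$ remark and the closing noncriticality note are handled as in the paper. The one real difference is where the $\sigma$-estimate is carried out. The paper never forms the limit objects $\hat f$, $\hat\sigma$, $\hat F^+$: it proves the quantitative bound $\sigma_j'(v_j)\le c(\varepsilon)^{-1}(\lambda d_j/2+2\delta_j)$ (inequality \eqref{eq:up2}) inside $M_j$, using the $\lambda$-concavity of $g_j^i$ along $\hat x_j\hat y_j$, the $\delta_j$-deviation of Definition \ref{dfn:nc'}(1), the $c(\varepsilon)$-openness of $f_j$ to re-enter $F_j^+$, and the maximality of $\sigma_j$, and only afterwards passes to the limit via the first variation formula and lower semicontinuity of angles. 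You do the same estimate upstairs, which is cleaner (the limit functions $\hat f^i$ are genuinely concave, so the geodesic $\hat x\hat y$ stays in $\hat F^+$ exactly), but note two points where your shortcuts conceal the paper's content: (i) the parenthetical justification that ``$\hat x$ maximizes $\hat\sigma$ over $\hat F^+$'' is where \eqref{eq:up2} lives --- to transfer the maximality you must approximate points of $\hat F^+$ (including boundary points of the rescaled constraint set) by points of $F_j^+$ at rescaled distance $o(1)$, which uses precisely the $c(\varepsilon)$-openness of $f_j$ together with the $\delta_j$-deviation; (ii) the asserted equality $\hat\sigma'((\hat y)'_{\hat x})=\frac1N\sum_\alpha-\cos\angle(\hat\eta_\alpha,(\hat y)'_{\hat x})$ should be the inequality $\ge$, obtained from hinge comparison (or, as in the paper, from the first variation downstairs plus lower semicontinuity of angles), since the rescaled limits of $|w_\alpha^j\cdot|$ are only Busemann-type; the inequality is all you need. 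Finally, in the noncriticality note the correct reason for $\tilde\angle a_i^jy_j\hat x_j>\pi/2-\varkappa(\delta)$ is the one-sided bound coming from $f_j^i(\hat x_j)\ge f_j^i(x_j)=f_j^i(y_j)$ combined with Definition \ref{dfn:nc'}(1), rather than $|a_i^jy_j|$ and $|a_i^j\hat x_j|$ being ``nearly equal.''
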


\begin{proof}
Let $a_1=c(\varepsilon)$ be the constant in Subclaim \ref{sbclm:up1}.
Take $a=c(\varepsilon)$ such that $a\ll L$ and $a\le a_1$.
Suppose that (1) does not hold for this $a$.
Then, for any large $j$, there exists $y_j\in F_j^+\cap B(\hat x_j,r)\setminus\{\hat x_j\}$ such that $\tilde\angle w_\alpha^jy_j\hat x_j\le\pi/2+a$ for all $\alpha$.
Let $\hat y_j$ be a farthest point from $\hat x_j$ among such $y_j$ and set $d_j:=|\hat x_j\hat y_j|$.
Then, (2)(i) is trivial.
Moreover, Subclaim \ref{sbclm:up1} implies that $d_j\to0$ since $a\le a_1$.

Now, we prove (2)(ii).
Suppose $(\frac1{d_j}M_j,\hat x_j)$ converges to an Alexandrov space $(\hat X,\hat x)$ of nonnegative curvature.
Passing to a subsequence, we may assume that $\hat y_j$ converges to $\hat y\in\hat X$.
We may further assume that shortest paths $\hat x_j\hat y_j$ and $\hat x_jw_\alpha^j$ converge to a shortest path $\hat x\hat y$ and a ray $\gamma_\alpha$ starting from $\hat x$, respectively.
Let $v_j,v_\alpha^j\in\Sigma_{\hat x_j}$ and $v,v_\alpha\in\Sigma_{\hat x}$ denote the directions of them, respectively.
Then, by the monotonicity of angles, we have
\begin{gather*}
\angle(v_\alpha,v_{\alpha'})\ge\lim_{j\to\infty}\tilde\angle w_\alpha^j\hat x_jw_{\alpha'}^j\ge\omega,\\
\angle(v,v_\alpha)\ge\lim_{j\to\infty}\tilde\angle\hat y_j\hat x_jw_\alpha^j=\pi-\lim_{j\to\infty}\tilde\angle\hat x_j\hat y_jw_\alpha^j\ge\pi/2-a
\end{gather*}
for all $1\le\alpha\neq\alpha'\le N$.
Suppose (2)(ii) does not hold, i.e.\ $\dim\hat X=l$.
Then, an argument similar to the first paragraph of the proof of Subclaim \ref{sbclm:up1} implies that
\begin{equation}\label{eq:up1}
\frac1 N\sum_{\alpha=1}^N-\cos\angle(v,v_\alpha)>c(a).
\end{equation}

On the other hand, by an argument similar to the second paragraph of the proof of Subclaim \ref{sbclm:up1}, we show that
\begin{equation}\label{eq:up2}
\sigma_j'(v_j)\le c(\varepsilon)^{-1}\left(\frac{\lambda d_j}2+2\delta_j\right),
\end{equation}
where $\lambda>0$ is a constant depending only on $\kappa$ and $\rho$ such that $g_j^i$ are uniformly $\lambda$-concave near $x_j$.
Fix large $j$ and let $z$ be an arbitrary point on a shortest path $\hat x_j\hat y_j$.
Then, by the $\lambda$-concavity of $g_j^i$, we have
\[\frac{|\hat y_jz|}{d_j}(g_j^i(z)-g_j^i(\hat x_j))+\frac{|\hat x_jz|}{d_j}(g_j^i(z)-g_j^i(\hat y_j))\ge-\frac\lambda2|\hat x_jz||\hat y_jz|.\]
Moreover, since $\hat x_j,\hat y_j\in F_j^+$, by the inequality of Definition \ref{dfn:nc'}(1), we have
\begin{gather*}
f_j^i(z)-f_j^i(x_j)\ge
\begin{cases}
f_j^i(z)-f_j^i(\hat x_j)\ge(g_j^i(z)-g_j^i(\hat x_j))-\delta_j|\hat x_jz|,\\
\hfil f_j^i(z)-f_j^i(\hat y_j)\ge(g_j^i(z)-g_j^i(\hat y_j))-\delta_j|\hat y_jz|.
\end{cases}
\end{gather*}
Combining the three inequalities above, we obtain
\begin{align*}
f_j^i(z)-f_j^i(x_j)&\ge-\frac\lambda2|\hat x_jz||\hat y_jz|-2\delta_j\frac{|\hat x_jz||\hat y_jz|}{d_j}\\
&\ge-\left(\frac{\lambda d_j}2+2\delta_j\right)|\hat x_jz|.
\end{align*}
Furthermore, since $f_j$ is $c(\varepsilon)$-open near $x_j$, we can find a point $\hat z\in F_j^+\cap B(x_j,r)$ such that $c(\varepsilon)|\hat zz|\le(\lambda d_j/2+2\delta_j)|\hat x_jz|$.
Since $\hat x_j$ is the maximum point of $\sigma_j$ on $F_j^+\cap\bar B(x_j,r)$, we have
\[\sigma_j(\hat x_j)\ge\sigma(\hat z)\ge\sigma(z)-|\hat zz|\ge\sigma(z)-c(\varepsilon)^{-1}\left(\frac{\lambda d_j}2+2\delta_j\right)|\hat x_jz|.\]
Since $z$ is arbitrary on $\hat x_j\hat y_j$, we obtain the desired inequality \eqref{eq:up2}.

Therefore, applying the first variation formula to the left-hand side of the inequality \eqref{eq:up2} and passing to the limit, we obtain
\[\frac1 N\sum_{\alpha=1}^N-\cos\angle(v,v_\alpha)\le0\]
by the lower semicontinuity of angles (choose $v_\alpha^j$ to be closest to $v_j$ among all directions from $\hat x_j$ to $w_\alpha^j$ so that the first variation formula holds).
This contradicts the inequality \eqref{eq:up1}.
\end{proof}

Now, we give the proof of Lemma \ref{lem:up} in the case of Claim \ref{clm:up1}(2).
Recall that Proposition \ref{prop:up} for $k+1$ and Proposition \ref{prop:up'} for $k$ and $l'\ge l+1$ hold by the induction hypothesis.
The proof is divided into the two cases of Claim \ref{clm:up2}.

First, suppose Claim \ref{clm:up2}(1) holds.
In this case, Lemma \ref{lem:up} follows from Proposition \ref{prop:up'} for $k+1$ as in the case of Claim \ref{clm:up1}(1).
Consider a $\nu$-discrete set in $F_j\cap B(\hat x_j,r)\setminus B(\hat x_j,\nu/2)$.
Note that $(f_j,|\hat x_j\cdot|)$ is $(c(\varepsilon),\varkappa(\delta),\nu/3)$-noncritical (and hence $c(\varepsilon)$-open) on a small neighborhood of $F_j\cap B(\hat x_j,r)\setminus B(\hat x_j,\nu/2)$.
Split the $\nu$-discrete set into $r/\delta\nu$ classes so that the difference of the distances from $\hat x_j$ to any two points in the same class is no more than $\delta\nu$.
Then, for each class, we can take a corresponding $\nu/2$-discrete set of $F_j\cap\partial B(\hat x_j,s)$ for some $s\in(\nu/2,r)$ by the $c(\varepsilon)$-openness.
Proposition \ref{prop:up} for $k+1$ implies that the number of such $\nu/2$-discrete points is less than $C(\varepsilon)\nu^{-(n-k-1)}$ (apply Proposition \ref{prop:up} to the rescaled space $s^{-1}M$).
Thus, Lemma \ref{lem:up} follows.
In particular, Proposition \ref{prop:up'} for $l=n$ holds (the base case of the reverse induction on $l$).

Next, suppose Claim \ref{clm:up2}(2) holds.
Then, the same argument as in the previous case shows that $v_{n-k}(F_j\cap B(\hat x_j,r)\setminus\bar B(\hat x_j,d_j))$ is uniformly bounded above.
On the other hand, Proposition \ref{prop:up'} for $l'\ge l+1$ applied to $\frac1{d_j}M_j$ implies that $v_{n-k}(F_j(\varepsilon,\delta_j,d_j)\cap\bar B(\hat x_j,d_j))$ is uniformly bounded above for some subsequence.
This completes the proof.
\end{proof}

\subsection{Almost continuity of the volume of the fibers}\label{sec:conti}

In this section, we prove Theorem \ref{thm:fbr}(3).
The proof is similar to that of the continuity of the volume of Alexandrov spaces under the Gromov-Hausdorff convergence (\cite[10.8]{BGP}, \cite[3.5]{S}, \cite[0.6]{Y2}).
By Remarks \ref{rem:fbr} and \ref{rem:regcon}, it suffices to prove the following:

\begin{prop}\label{prop:conti}
Let $M$ be an $n$-dimensional Alexandrov space, $p\in M$ and $k<n$.
Let  $f:B(p,\ell)\to\mathbb R^k$ be a $\delta$-almost regular map associated with a $(k,\delta)$-strainer $\{(a_i,b_i)\}_{i=1}^k$ at $p$ with length $>\ell$.
For simplicity, assume that $f^{-1}(\bar B(f(p),r))$ is contained in $B(p,\ell\delta)$ for some $0<r\ll\ell\delta$.
Set $F_v:=f^{-1}(v)$ for $v\in B(f(p),r)$.
Then, for any sufficiently close $u,v\in B(f(p),r)$, we have
\[\left|\frac{\vol_{n-k}F_u}{\vol_{n-k}F_v}-1\right|<\varkappa(\delta).\]
\end{prop}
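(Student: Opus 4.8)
The plan is to imitate the proof of the continuity of the volume of Alexandrov spaces under the Gromov-Hausdorff convergence, replacing ``almost Euclidean balls'' by ``almost Euclidean level slices'' of charts for $f$. Note that by the standing assumption every fiber $F_w$ with $w\in B(f(p),r)$ lies in $B(p,\ell\delta)$, and near a point of $F_w$ the map $f$ is still a $\varkappa(\delta)$-almost regular map associated with the same strainer, so Propositions \ref{prop:reg}, \ref{prop:int} and Remark \ref{rem:recti} apply along $F_w$.

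First I would build a local normal form near almost every point of each fiber. By Remark \ref{rem:recti}, for each $w\in B(f(p),r)$ the set of $x\in F_w$ admitting an $(n-k,\delta)$-strainer with endpoints in $F_w$ is open and of full $\vol_{n-k}$-measure in $F_w$; for such $x$, if $h_x$ denotes the distance coordinate of that fiberwise strainer, then $\Psi_x:=(f,h_x)$ is a $\varkappa(\delta)$-almost regular map associated with an $(n,\varkappa(\delta))$-strainer at $x$ (as in the proof of Proposition \ref{prop:low}), so by Proposition \ref{prop:reg}(1) it is a $\varkappa(\delta)$-almost isometry from a small ball around $x$ onto an open subset $W_x$ of $\mathbb R^k\times\mathbb R^{n-k}$ whose first $k$ components are \emph{exactly} $f$. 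Choosing $\rho_x>0$ small, the closed box $B_x:=I^k(f(x),\rho_x)\times I^{n-k}(h_x(x),\rho_x)$ lies in $W_x$, so $\Omega_x:=\Psi_x^{-1}(B_x)$ is open in $M$, and for every $w'\in I^k(f(x),\rho_x)$ the level slice $P_x^{w'}:=\Omega_x\cap f^{-1}(w')=\Psi_x^{-1}(\{w'\}\times I^{n-k}(h_x(x),\rho_x))$ is $\varkappa(\delta)$-almost isometric to an $(n-k)$-cube of radius $\rho_x$. Since these cubes all have the same $(n-k)$-volume, this already yields the local comparison $|\vol_{n-k}P_x^u/\vol_{n-k}P_x^v-1|<\varkappa(\delta)$ for $u,v\in I^k(f(x),\rho_x)$.

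Next I would globalize. Fix $u$ and let $v$ be close to it. The sets $\{P_x^u\}_x$ form a fine covering of the full-measure regular part of $F_u$ by pieces bi-Lipschitz to Euclidean cubes, hence a Vitali covering for $\vol_{n-k}|_{F_u}$; I would extract a countable disjoint subfamily covering $F_u$ up to measure zero and keep finitely many members $P_{x_1}^u,\dots,P_{x_N}^u$ with $\sum_{j\le N}\vol_{n-k}P_{x_j}^u>(1-\varkappa(\delta))\vol_{n-k}F_u$, requiring $|uv|<\min_{j\le N}\rho_{x_j}$ so that the twin slices $P_{x_j}^v$ are defined. The local comparison gives $\sum_{j\le N}\vol_{n-k}P_{x_j}^v\ge(1-\varkappa(\delta))\vol_{n-k}F_u$. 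It then remains to bound this sum from above by $\vol_{n-k}F_v$ plus a small error: each transition map $\Psi_{x_i}\circ\Psi_{x_j}^{-1}$ is a $\varkappa(\delta)$-almost isometry whose first $k$ coordinates are the identity, so it has the form $(w,s)\mapsto(w,\theta_{ij}(w,s))$ with $|\theta_{ij}(w,s)-\theta_{ij}(w',s)|\le\varkappa(\delta)|w-w'|$; hence the disjointness of the $P_{x_j}^u$ forces $\vol_{n-k}(P_{x_i}^v\cap P_{x_j}^v)$ to be at most $\varkappa(\delta)|uv|$ times the perimeter of a $\rho_{x_i}$-cube, and a Bonferroni-type estimate over the finitely many pairs gives $\sum_{j\le N}\vol_{n-k}P_{x_j}^v\le\vol_{n-k}F_v+C_N\varkappa(\delta)|uv|$. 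Letting $|uv|\to0$ (permissible, since the statement only asserts the estimate for sufficiently close $u,v$) yields $\vol_{n-k}F_v\ge(1-\varkappa(\delta))\vol_{n-k}F_u$, and the same argument with the roles of $u$ and $v$ exchanged gives the reverse inequality.

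The routine parts are Proposition \ref{prop:reg}, the box-slice computation, and the Vitali covering. The main obstacle will be the overlap control of the twin slices $P_{x_j}^v$ inside $F_v$: different charts $\Psi_{x_j}$ agree only up to a $\varkappa(\delta)$-almost isometry, so they do not patch into an honest near-isometry between fibers --- indeed $F_u$ and $F_v$ need not even be homeomorphic. The point that rescues the argument is that all the charts share the \emph{same} exact first component $f$; this aligns their ``fiber directions'' and forces the transition maps to displace the $\mathbb R^{n-k}$-coordinate by at most $\varkappa(\delta)$ times the box size, a quantity of higher order in $|uv|$, so that the twin slices become essentially disjoint as $v\to u$.
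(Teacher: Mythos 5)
Your local chart construction, the slice-volume comparison within a single chart, and the overlap estimate for the twin slices $P_{x_i}^v\cap P_{x_j}^v$ are essentially sound, and your route is genuinely different from the paper's: the paper does not cover the fiber by disjoint slices, but glues the local ``parallel translations along the $f$-coordinate'' into a single global $\varkappa(\delta)$-almost isometry from the regular part $F_v(n-k,\delta,\rho)$ \emph{into} $F_u$ (Lemma \ref{lem:conti1}, by the same averaging procedure as in Theorem \ref{thm:con}), which makes the overlap question disappear. The real divergence, however, is in how the singular part of the fiber is handled, and this is where your argument has a genuine gap.

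You control the singular part only through Remark \ref{rem:recti}, i.e.\ the qualitative statement that for each \emph{fixed} fiber the non-strained part has $\vol_{n-k}$-measure zero. Consequently the data of your Vitali family on $F_u$ --- the number $N$ of pieces, the chart scales $\rho_{x_j}$, hence the admissible closeness $|uv|$ --- depend on the fiber you cover. For the inequality $\vol_{n-k}F_v\ge(1-\varkappa(\delta))\vol_{n-k}F_u$ this is harmless (the threshold depends on $u$ only), but your reverse inequality is obtained by ``exchanging the roles of $u$ and $v$'', so its threshold depends on $v$: for each $v$ you only get the bound when $|uv|<\epsilon(v)$, and nothing prevents $\epsilon(v)$ from degenerating as $v\to u$. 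This does not yield the statement in the intended uniform sense, and in particular it does not suffice for Theorem \ref{thm:fbr}(3), where $p$ is fixed and the estimate must hold for \emph{all} $q$ in a fixed neighborhood. The paper secures exactly this uniformity by two quantitative inputs that your sketch omits: Lemma \ref{lem:conti2}, which bounds $\vol_{n-k}\bigl(F_v\setminus F_v(n-k,\delta,\rho)\bigr)<\varkappa_\delta(\rho)$ \emph{uniformly in $v$} (proved by an induction using Proposition \ref{prop:up}, not by Remark \ref{rem:recti} alone), and the uniform lower bound $\vol_{n-k}F_v>c(p)$ from Proposition \ref{prop:low} and Remark \ref{rem:low}, which together allow one fixed $\rho$ and one fixed closeness threshold $|uv|\ll\rho\delta$ to work for every pair. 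To repair your proof you would need such a $v$-uniform quantitative version of Remark \ref{rem:recti} (or, alternatively, a two-sided local Ahlfors-regularity estimate for the fibers letting you bound the part of $F_v$ not covered by the twin slices of the $u$-side family); either way this is a substantive missing ingredient rather than a routine detail.
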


Let $F_v$ be as above.
Let $m$ be a positive integer and $\theta$, $\rho$ positive numbers.
We denote by $F_v(m,\theta,\rho)$ the set of all points in $F_v$ having an $(m,\theta)$-strainer $\{(a_{k+i},b_{k+i})\}_{i=1}^m$ with length $>\rho$ such that $a_{k+i},b_{k+i}\in F_v$.
The above proposition immediately follows from the following two lemmas:

\begin{lem}\label{lem:conti1}
For any small $\rho>0$, if $u,v\in B(f(p),r)$ are sufficiently close, then there exists a $\varkappa(\delta)$-almost isometry from $F_v(n-k,\delta,\rho)$ to an open subset of $F_u$.
\end{lem}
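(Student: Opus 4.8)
The plan is to build $\Phi$ by covering $F_v(n-k,\delta,\rho)$ with Euclidean charts, transporting each chart from the ``slice at height $v$'' to the ``slice at height $u$'', and gluing the resulting local maps. The geometric input is that the regular part looks Euclidean \emph{in both directions at once}. Fix $x\in F_v(n-k,\delta,\rho)$ with its fiber strainer $\{(a_{k+i},b_{k+i})\}_{i=1}^{n-k}$, $a_{k+i},b_{k+i}\in F_v$. First I would check that $\{(a_i,b_i)\}_{i=1}^k\cup\{(a_{k+i},b_{k+i})\}_{i=1}^{n-k}$ is an $(n,\varkappa(\delta))$-strainer at $x$ of length $\ge c\rho$: the first $k$ pairs form a $(k,\varkappa(\delta))$-strainer at $x$ by stability of strainers on $B(p,\ell\delta)$, and the mixed cross-angles are $\pi/2\pm\varkappa(\delta)$ for the reason seen in the proof of Proposition \ref{prop:int} via Lemma \ref{lem:susp}: $(a_i)'_x$ lies $\varkappa(\delta)$-close to the $\mathbb S^{k-1}$-factor of $\Sigma_x\approx S^k(\Sigma)$, while $(a_{k+i})'_x$, being a direction into the fiber $F_v$ (use Corollary \ref{cor:int} and the first variation formula), lies $\varkappa(\delta)$-close to the $\Sigma$-factor. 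Since $f$ is $\delta$-close to $(|a_1\cdot|,\dots,|a_k\cdot|)$, the map $G_x:=(f,h_x)$ with $h_x:=(|a_{k+1}\cdot|,\dots,|a_n\cdot|)$ is a $\varkappa(\delta)$-almost regular map for this strainer, so by Proposition \ref{prop:reg}(1) (this is also the content of Remark \ref{rem:recti}) it is a $\varkappa(\delta)$-almost isometry from $B(x,c\rho\delta)$ onto an open set $\Omega_x\subset\mathbb R^n$ containing the ball $B(G_x(x),(1-\varkappa(\delta))c\rho\delta)$.

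Next, fix $u$ with $|uv|$ so small that $|uv|\ll\varkappa(\delta)c\rho\delta$; this is what ``$u$, $v$ sufficiently close'' will mean, $\rho$ and $\delta$ being fixed. Then for each such $x$ the slice $\Omega_x\cap(\{u\}\times\mathbb R^{n-k})$ is nonempty, and $G_x$ restricts to $\varkappa(\delta)$-almost isometries from $F_v\cap B(x,c\rho\delta)$ and from $F_u\cap B(x,c\rho\delta)$ onto the slices at heights $v$ and $u$. I would define the local comparison map $\Phi_x(w):=G_x^{-1}(u,h_x(w))$ for $w\in F_v\cap B(x,c\rho\delta/2)$; it is a $\varkappa(\delta)$-almost isometry onto its image in $F_u$, and $(1-\varkappa(\delta))|uv|\le|w\Phi_x(w)|\le(1+\varkappa(\delta))|uv|$ (the lower bound because $f$ is $(1+\varkappa(\delta))$-Lipschitz, so in fact $\Phi_x(w)$ realizes $\dist(w,F_u)$ up to error $\varkappa(\delta)|uv|$). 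The key compatibility step is: if $w$ lies in the domains of both $\Phi_x$ and $\Phi_y$, then $|\Phi_x(w)\Phi_y(w)|<\varkappa(\delta)|uv|$. This follows by feeding the point $\Phi_y(w)\in F_u$ — which is within $(1+\varkappa(\delta))|uv|$ of $w$ — into the chart $G_x$: since $G_x(w)=(v,h_x(w))$ and $\dist(w,F_u)\ge(1-\varkappa(\delta))|uv|$, the Pythagorean bound forces $|h_x(w)-h_x(\Phi_y(w))|<\varkappa(\delta)|uv|$, and applying $G_x^{-1}$ gives the claim.

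Finally I would glue: take a maximal $(c\rho\delta/10)$-separated net $\{x_\alpha\}$ in $F_v(n-k,\delta,\rho)$ (finite, as this set is precompact), with Voronoi cells $C_\alpha\subset B(x_\alpha,c\rho\delta/10)$, and set $\Phi|_{C_\alpha}:=\Phi_{x_\alpha}|_{C_\alpha}$. For $w_1\in C_\alpha$, $w_2\in C_\beta$ with $|w_1w_2|\ge c\rho\delta/5$ the estimate $\bigl||\Phi(w_1)\Phi(w_2)|-|w_1w_2|\bigr|<\varkappa(\delta)|w_1w_2|$ is immediate from $|w_i\Phi(w_i)|\le(1+\varkappa(\delta))|uv|\ll\varkappa(\delta)|w_1w_2|$; for $|w_1w_2|<c\rho\delta/5$ both points lie in a single chart domain $C_\gamma$, so by the compatibility step $\Phi(w_i)$ is within $\varkappa(\delta)|uv|$ of $\Phi_{x_\gamma}(w_i)$ and the estimate reduces to $\Phi_{x_\gamma}$ being almost isometric. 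The image is contained in $F_u$ and contains an open set (e.g.\ the regular part $F_u(n-k,\varkappa(\delta),c\rho\delta/2)$, reached by the symmetric construction $F_u\to F_v$).

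The step I expect to be the real obstacle is this last gluing at the \emph{smallest} scale. The compatibility bound contributes an additive error $\varkappa(\delta)|uv|$, so what the above honestly produces is a map with $\bigl||\Phi(w_1)\Phi(w_2)|-|w_1w_2|\bigr|<\varkappa(\delta)\max\{|w_1w_2|,|uv|\}$ for all pairs — a genuine $\varkappa(\delta)$-almost isometry only at scales above $|uv|$. Since $|uv|$ may be taken arbitrarily small while the fiber volumes are bounded below (Proposition \ref{prop:low}), this is precisely the kind of statement the volume comparison in Proposition \ref{prop:conti} uses (one applies $\Phi$ to $\nu$-discrete nets with $\nu\gg|uv|$ and lets $|uv|\to0$), exactly as the analogous estimates in the proof of continuity of volume are verified only in the limit; one could alternatively run a contradiction-and-compactness argument as in Lemma \ref{lem:low}. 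Making the passage from this ``almost-isometry above scale $|uv|$'' to the stated conclusion precise, together with the openness of the image, is where the care is needed.
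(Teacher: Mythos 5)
Your local construction is exactly the paper's: combine the base strainer with the fiber strainer to get an $(n,\varkappa(\delta))$-strainer at $x$, take the chart $G_x=(f,h_x)$ (whose first $k$ components are $f$ itself, so Proposition \ref{prop:reg}(1) applies and the translated point automatically lies in $F_u$), and define the local map by translation along the $f$-coordinate. The genuine gap is the gluing, and you have located it yourself: a Voronoi-cell patchwork only yields the estimate $\bigl||\Phi(w_1)\Phi(w_2)|-|w_1w_2|\bigr|<\varkappa(\delta)\max\{|w_1w_2|,|uv|\}$, i.e.\ an additive error of size $\varkappa(\delta)|uv|$, so the multiplicative bound required by the definition of a $\varkappa(\delta)$-almost isometry fails at scales comparable to or below $|uv|$; moreover the piecewise map is not even continuous, so surjectivity onto an \emph{open} subset of $F_u$ is also not secured. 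Your two proposed escapes do not close this: Proposition \ref{prop:conti} compares Hausdorff measures, which require bi-Lipschitz control at \emph{all} scales (packing estimates at scales $\nu\gg|uv|$ bound $v_{n-k}$ but not $\vol_{n-k}$ from below), and the compactness alternative is not carried out. So the lemma as stated is not proved by your argument.

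The paper closes exactly this gap by gluing as in the proof of Theorem \ref{thm:con}, not by a partition into cells: on overlaps one interpolates inside the chart, $\Phi_j:=G_{x_j}^{-1}\bigl((1-\chi_j)\,G_{x_j}\circ\Phi_{j-1}+\chi_j\,(u,h_{x_j}(\cdot))\bigr)$, with cutoffs $\chi_j$ that are $L/(\rho\delta)$-Lipschitz. Since the two maps being interpolated have identical first $k$ components ($\equiv u$, so the glued map stays in $F_u$) and differ by at most $C|uv|\ll\rho\delta$, the interpolation contributes an error bounded by $(L/\rho\delta)\cdot C|uv|\cdot|w_1w_2|\le\varkappa(\delta)|w_1w_2|$ — proportional to $|w_1w_2|$, not an additive $\varkappa(\delta)|uv|$ — and the inductive difference estimate of the form \eqref{eq:ind} (with the same change-of-chart step as in the general case there) survives the gluing. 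This yields a continuous map satisfying the multiplicative almost-isometry bound at every scale, from which openness of the image also follows; this is the step your Voronoi construction cannot reproduce, and it is where your proof would have to be rewritten.
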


\begin{proof}
Since the proof is similar to that of Theorem \ref{thm:con} in the noncollapsing case, we only outline it.
Assume $\rho\ll|f^{-1}(\bar B(f(p),r)),\partial B(p,\ell\delta)|$ and $|uv|\ll\rho\delta$.
Fix $x\in F_v(n-k,\delta,\rho)$ and let $\{(a_{k+i},b_{k+i})\}_{i=1}^{n-k}$ be an $(n-k,\delta)$-strainer at $x$ with length $>\rho$ such that $a_{k+i},b_{k+i}\in F_v$.
Then, the inequality \eqref{eq:reg} implies that $\{(a_i,b_i)\}_{i=1}^n$ is an $(n,\varkappa(\delta))$-strainer at $x$ with length $>\rho$.
Hence, by Proposition \ref{prop:reg}(1), there exists a $\varkappa(\delta)$-almost isometry from $B(x,\rho\delta)$ to an open subset of $\mathbb R^n$ whose first $k$ components coincide with $f$.
Therefore, a translation with respect to the $f$-coordinate gives a $\varkappa(\delta)$-almost isometry from $B(x,\rho\delta/2)\cap F_v$ to an open subset of $F_u$, provided $|uv|\ll\rho\delta$.
Gluing such local maps as in the proof of Theorem \ref{thm:con}, we obtain a global almost isometry.
\end{proof}

\begin{lem}[cf.\ {\cite[10.9]{BGP}}]\label{lem:conti2}
Let $1\le m\le n-k$ and $\theta,\rho>0$.
Then, for any $0<\nu<\rho$, we have
\[\nu^{n-k}\beta_\nu(F_v\setminus F_v(m,\theta,\rho))<\varkappa_\theta(\rho),\]
where $\varkappa_\theta(\rho)$ is a positive function depending only on $n$, $\kappa$ and $\theta$ such that $\varkappa_\theta(\rho)\to0$ as $\rho\to0$.
In particular,
\[\vol_{n-k}(F_v\setminus F_v(n-k,\delta,\rho))<\varkappa_\delta(\rho).\]
\end{lem}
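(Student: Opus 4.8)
The plan is to obtain the ``in particular'' statement at once from the main inequality by taking $m=n-k$, $\theta=\delta$ and recalling $\vol_{n-k}(Y)\le C(n-k)\,v_{n-k}(Y)$, so the whole task reduces to the first inequality. That I would prove by induction on $m$, following the scheme of \cite[10.6, 10.9]{BGP}; the base case $m=0$ is vacuous since $F_v(0,\theta,\rho)=F_v$. Throughout one works inside the fixed bounded region $f^{-1}(\bar B(f(p),r))\subset B(p,\ell\delta)$ containing $F_v$, and one allows the strainer‑quality parameter to degrade by a fixed bounded factor (say $\theta\mapsto 2\theta$) at each of the at most $n-k$ steps, which is harmless.

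For the step $m-1\rightsquigarrow m$ I would split $F_v\setminus F_v(m,2\theta,\rho)$ into the set of points carrying no $(m-1,\theta)$‑strainer of length $>\rho$ with endpoints in $F_v$ — whose $\nu$‑covering is controlled by the inductive hypothesis — and the set $G$ of points $x$ that do carry such an $(m-1,\theta)$‑strainer but admit no $(m,2\theta)$‑strainer of length $>\rho$ with the extra pair again in $F_v$. By a compactness/net argument over the finitely many strainer ``configurations'' available at scale $\sim\rho\theta$ in the fixed bounded region, $G$ is covered by boundedly many pieces $G_\alpha$ on each of which the $(m-1)$‑strainer may be taken to be one fixed collection $\{(a_i^\alpha,b_i^\alpha)\}_{i=1}^{m-1}$, the multiplicity of the cover depending only on $n$ and $\kappa$. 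On $G_\alpha$ set $\varphi_\alpha=(|a_1^\alpha\cdot|,\dots,|a_{m-1}^\alpha\cdot|)$; then $(f,\varphi_\alpha)$ is a $\varkappa(\delta)$‑almost regular map associated with a $(k+m-1,\varkappa(\delta))$‑strainer, so by Proposition \ref{prop:reg} the restriction $\varphi_\alpha|_{F_v}$ is $(1+\varkappa(\delta))$‑Lipschitz and $(1-\varkappa(\delta))$‑open near such points, and by the volume bounds already established (Proposition \ref{prop:low}, Proposition \ref{prop:up} and Remark \ref{rem:recti}, applied to the almost regular map $(f,\varphi_\alpha)$ with $k+m-1<n$ pairs) its fibers inside $F_v$ — which are precisely the fibers of $\varphi_\alpha|_{F_v}$ — have Hausdorff dimension $(n-k)-(m-1)$ and $((n-k)-(m-1))$‑volume at most $C(\diam F_v)^{(n-k)-(m-1)}$.

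The geometric heart is to show that, for $x\in G_\alpha$, the fiber $\Phi_x$ of $\varphi_\alpha|_{F_v}$ through $x$ is \emph{thin} at $x$: for $y,z\in\Phi_x$ with $|xy|,|xz|>\rho$ one should have $\tilde\angle yxz\le\pi-2\theta$, for otherwise $\{(a_i^\alpha,b_i^\alpha)\}\cup\{(y,z)\}$ would be an $(m,2\theta)$‑strainer of length $>\rho$ at $x$ with the new pair in $F_v$; here one uses that $y,z$ lie in a common fiber of $\varphi_\alpha$, so the relevant comparison triangles are isoceles and the near‑orthogonality $\tilde\angle a_i^\alpha xy,\tilde\angle b_i^\alpha xy\approx\pi/2$ follows once the argument is run at the scale of $\diam\Phi_x$. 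This is exactly the ``$m=1$'' instance of the lemma inside the lower‑dimensional space $\Phi_x$, and iterating it as in \cite[10.6, 10.9]{BGP} bounds the $\nu$‑covering number of $G_\alpha\cap\Phi_x$ by $\varkappa_\theta(\rho)\,\nu^{-((n-k)-m)}$; integrating this fibrewise bound against the fibers of $\varphi_\alpha|_{F_v}$ using the $(1-\varkappa(\delta))$‑openness and the fiber‑volume bound, and then summing over the boundedly many $\alpha$, yields $\nu^{n-k}\beta_\nu(G)<\varkappa_\theta(\rho)$ and closes the induction. I expect the main obstacle to be precisely this last circle: making the implication ``non‑extendability $\Rightarrow$ covering bound'' quantitative and uniform in the scale $\rho$ and in $\nu<\rho$ — in particular verifying that the resulting error genuinely tends to $0$ as $\rho\to0$ rather than merely staying bounded — together with the bookkeeping of the isoceles‑angle estimates, which is delicate because the pairs of $\varphi_\alpha$ drawn from $F_v$ have endpoints only at distance $>\rho$ (unlike the pairs of the strainer of $f$, whose endpoints sit at distance $\sim\ell$), so the thinness argument must be carried out at the scale $\diam\Phi_x$ and not naively at scale $\rho$.
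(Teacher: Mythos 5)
Your general frame (induction on $m$, extending a strainer inside the fiber, counting discrete points via Proposition \ref{prop:up}) is in the spirit of the paper, but the step that actually produces the decay as $\rho\to0$ is missing, and you flag this yourself. Your induction has only the vacuous base $m=0$, and the inductive step's key estimate --- the covering bound for $G_\alpha\cap\Phi_x$ --- is justified by appealing to ``the $m=1$ instance of the lemma inside $\Phi_x$,'' i.e.\ to the lemma itself for the augmented map $(f,\varphi_\alpha)$; no independent ground-level argument is ever supplied, so the scheme is circular. (Moreover the exponent you claim fibrewise, $\varkappa_\theta(\rho)\nu^{-((n-k)-m)}$, is stronger than what that instance would give, namely $\varkappa_\theta(\rho)\nu^{-((n-k)-(m-1))}$: already a one-dimensional fiber whose non-strained part contains a segment of length $\sim\rho$ violates your version for small $\nu$.) The paper's ground-level mechanism is the pigeonhole of \cite[10.5]{BGP}: among more than $C(\theta)$ points of $F_v$ that are $\rho$-discrete and lie in a slab where the distances to the existing strainer points vary by at most $\rho\theta_1$, three of them form a $(1,\theta)$-strainer of length $>\rho$, hence an extension. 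Combined with a maximal $\rho_1\theta_1$-net of $F_v(m-1,\theta_1,\rho_1)$ (cardinality $\le C(\rho_1\theta_1)^{-(n-k)}$ by Proposition \ref{prop:up}), the slab count $(2\rho_1/\rho)^{m-1}$, and the per-$\rho$-ball bound $C(\rho/\nu)^{n-k}$ (Proposition \ref{prop:up} again), this gives $\nu^{n-k}\beta_\nu\le C(\theta)(\rho/\rho_1)^{n-k-m+1}$; the smallness comes precisely from the scale separation $\rho\ll\rho_1$, not from any fibrewise integration.

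The second genuine gap is the one you name at the end but do not resolve: because you keep the $(m-1)$-strainer at the same length $>\rho$ as the sought extension, the ``thinness'' implication fails. If $|xy|$ is comparable to (or larger than) the length of the pairs $(a_i^\alpha,b_i^\alpha)$, the isoceles comparison gives no estimate $\tilde\angle a_i^\alpha xy\approx\pi/2$, so a near-antipodal pair in $\Phi_x$ at distance $>\rho$ does not produce an $(m,2\theta)$-strainer. The paper avoids this by a two-parameter induction: the level-$(m-1)$ strainers have length $>\rho_1\gg\rho$ and \emph{improved} quality $\theta_1=c(\theta)\ll\theta$ (rather than degrading $\theta\mapsto2\theta$), and the new pair is sought only among points of a slab of width $\rho\theta_1$ inside a ball of radius $\rho_1\theta_1$, so all cross comparison angles are within $C\theta_1\ll\theta$ of $\pi/2$. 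Finally, your claim that $G$ is covered by ``boundedly many'' pieces $G_\alpha$ independent of $\rho$ is not tenable --- the number of net points grows like $(\rho_1\theta_1)^{-(n-k)}$ --- and in the paper this large factor is absorbed by the explicit three-fold count rather than by a bounded-multiplicity or coarea argument. As written, the proposal does not close; the missing ingredients are exactly the \cite[10.5]{BGP} pigeonhole and the two-scale structure $\rho\ll\rho_1$, $\theta_1=c(\theta)$.
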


\begin{proof}
The proof is carried out by the induction on $m$ as in \cite[10.9]{BGP}.
Let $\rho_1\gg\rho$ and $\theta_1=c(\theta)\ll\theta$.
Take a maximal $(\rho_1\theta_1)$-discrete net $\{x_\alpha\}_{\alpha=1}^N$ of $F_v(m-1,\theta_1,\rho_1)$.
Then, we have $N<C(\rho_1\theta_1)^{-(n-k)}$ by Proposition \ref{prop:up}.
Fix $\alpha$ and let $\{(a_{k+i},b_{k+i})\}_{i=1}^{m-1}$ be an $(m-1,\theta_1)$-strainer at $x_\alpha$ with length $>\rho_1$ such that $a_{k+i},b_{k+i}\in F_v$.
Divide $B(x_\alpha,\rho_1\theta_1)\cap F_v$ into $(2\rho_1/\rho)^{m-1}$ classes $\{D_{\alpha\beta}\}_{\beta}$ so that if $x$, $y$ belong to the same class, then we have $||a_{k+i}x|-|a_{k+i}y||\le\rho\theta_1$ for all $1\le i\le m-1$.
By the argument in \cite[10.5]{BGP}, if there exist sufficiently many $\rho$-discrete points (depending on $\theta$) in $D_{\alpha\beta}$, then we can choose three of them so that they form a $(1,\theta)$-strainer (and a strained point) with length $>\rho$.
Therefore, the number of $\rho$-discrete points in $D_{\alpha\beta}\setminus F_v(m,\theta,\rho)$ is less than $C(\theta)$.
Hence, we can cover $F_v(m-1,\theta_1,\rho_1)\setminus F_v(m,\theta,\rho)$ by at most $C(\theta)\rho_1^{-(n-k)}(\rho_1/\rho)^{m-1}$ balls of radius $\rho$ (recall $\theta_1=c(\theta)$).
By Proposition \ref{prop:up} again, the number of $\nu$-discrete points in each $\rho$-ball is less than $C(\rho/\nu)^{n-k}$ for any $0<\nu<\rho$.
Thus, we have
\[\nu^{n-k}\beta_\nu(F_v(m-1,\theta_1,\rho_1)\setminus F_v(m,\theta,\rho))<C(\theta)\left(\frac\rho{\rho_1}\right)^{n-k-m+1}\]
for any $0<\nu<\rho$.
Together with the induction hypothesis and a suitable choice of $\rho_1$, this yields the desired estimate.
\end{proof}

\begin{proof}[Proof of Proposition \ref{prop:conti}]
By Proposition \ref{prop:low} and Remark \ref{rem:low}, we may assume $\vol_{n-k}F_v>c(p)$ for any $v\in B(f(p),r)$.
Let $\rho>0$ be so small that $\varkappa_\delta(\rho)\ll c(p)$.
Then, by the above two lemmas, for sufficiently close $u,v\in B(f(p),r)$, we have
\begin{align*}
(1+\varkappa(\delta))\vol_{n-k}F_u&\ge\vol_{n-k}F_v(n-k,\delta,\rho)\\
&\ge\vol_{n-k}F_v-\varkappa_\delta(\rho)\ge(1-\varkappa(\delta))\vol_{n-k}F_v.
\end{align*}
This completes the proof.
\end{proof}

\section*{Acknowledgment}

The author would like to thank Prof.\ Takao Yamaguchi for his advice and encouragement.
He is also grateful to the referee for the careful reading and useful comments.

\end{document}